\documentclass[12pt,reqno]{amsart}
\usepackage[T1]{fontenc}
\usepackage[english]{babel}
\usepackage[utf8]{inputenc}
\usepackage{mathtools}
\usepackage{amssymb}
\usepackage{csquotes}
\usepackage{enumitem}
\usepackage{braket}

\usepackage{pgfplots}
\usepackage{tikz}
\usetikzlibrary{arrows}
\usetikzlibrary{matrix}
\usetikzlibrary{cd}
\usepackage{graphicx} 
\usepackage{caption} 
\usepackage{subcaption} 
\pgfplotsset{compat=1.18}

\usepackage[a4paper,margin=2.5cm]{geometry}

\usepackage{hyperref}
\hypersetup{
    colorlinks=true,
    linkcolor=blue,
    filecolor=magenta,      
    urlcolor=cyan,
}  

\newcommand{\C}{\mathbb{C}}
\newcommand{\N}{\mathbb{N}}
\newcommand{\norm}[1]{\left\|#1\right\|}
\newcommand{\R}{\mathbb{R}}
\newcommand{\Id}{\operatorname{Id}}
\newcommand{\Hil}{\mathcal{H}}

\newcommand{\CAT}{\operatorname{CAT}(0)}
\newcommand{\RD}{\operatorname{(RD)}}

\numberwithin{equation}{section}

\title[Operator-Valued Haagerup Inequalities]{A Centroid Framework for Operator-Valued Haagerup Inequalities}
\author[P.\ O.\ Santos]{Patrick Oliveira Santos}
\address{Patrick Oliveira Santos. Division of Science, NYU Abu Dhabi, Abu Dhabi, UAE}
\email{po2150@nyu.edu}

\begin{document}
\begin{abstract}
We introduce a centroid-based block decomposition of the left-regular representation and use it to prove operator-valued Haagerup inequalities for finitely generated groups with centroid maps. The decomposition separates the three centroid growth conditions into distinct operator-norm contributions and, under a factorization hypothesis for the associated Schur multipliers, yields complementary lower bounds. For median groups of rank $\nu$, the centroid blocks are related to spherical operators, giving two-sided estimates with the explicit factor $\binom{\nu+r}{r}$; this applies in particular to right-angled Artin groups and to groups acting on finite-rank $\operatorname{CAT}(0)$ cube complexes. For finite-rank coarse median groups, we construct a coarse iterate median with error independent of the cardinality of the input set and obtain spherical-block estimates with factor $o(r^{1+\nu/2+\varepsilon})$ for every $\varepsilon>0$.
\end{abstract}

\maketitle

\newtheorem{theorem}{Theorem}[section]
\theoremstyle{plain}

\newtheorem{corollary}[theorem]{Corollary}
\newtheorem{lemma}[theorem]{Lemma}
\newtheorem{conjecture}[theorem]{Conjecture}
\newtheorem{proposition}[theorem]{Proposition}

\theoremstyle{definition}
\newtheorem{example}[theorem]{Example}
\newtheorem{definition}[theorem]{Definition}
\newtheorem{remark}[theorem]{Remark}
\newtheorem{question}[theorem]{Question}

\section{Introduction}\label{section: introduction}
\subsection{Background} Let $G$ be a finitely generated group with identity element $1\in G$. A nonnegative function $\ell:G\to \N=\{0,1,2,\ldots\}$ is called a proper length function if
\begin{enumerate}
    \item $\ell(gh)\le \ell(g)+\ell(h)$, for all $g,h\in G$;
    \item $\ell(g)=\ell(g^{-1})$, for all $g\in G$;
    \item $\ell(1)=0$;
    \item for every $r\in \N$, the ball $B_r=\{g\in G: \ell(g)\le r\}$ is finite.
\end{enumerate}
We denote the sphere of radius $r$ by $S_r=\{g\in G: \ell(g)=r\}$. Throughout the paper, all length functions are assumed to be proper.
For a finite symmetric generating set $S\subseteq G$, the associated word length is given by
\begin{align*}
    \ell_S(g)=\min\{n\ge 1: g=s_1\cdots s_n, s_1,\ldots,s_n\in S\};\quad \ell_S(1)=0.
\end{align*}
We say that $G$ has the rapid decay property $\RD$ if there exists a polynomial $P$ such that, for every positive integer $r$ and every $f\in \C[G]$ supported on $S_r$,
\begin{align}\label{equation: RD definition}
    \norm{f}_*\le P(r)\norm{f}_2.
\end{align}
Here, $\norm{f}_*$ is the operator norm of left multiplication by $f$ on $\ell^2(G)$, and $\norm{f}_2$ is the usual $\ell^2$-norm. Haagerup's original argument \cite{Haagerup1978originalpaper} proves \eqref{equation: RD definition} for the free group $\mathbb{F}_L$, with the optimal polynomial $P(r)=r+1$. Jolissaint later extended this estimate to several classes of groups, including hyperbolic groups \cite{Jolissaint1990RD}. Dru{\c{t}}u and Sapir established the corresponding permanence result for groups that are hyperbolic relative to subgroups with rapid decay \cite{DrutuSapirrelativehyperbolic}. Since then, the rapid decay property has been proved for many other discrete groups; see the survey \cite{Chatterji2017Introduction}.

The rapid decay property has important applications in geometric group theory and operator algebras. Haagerup used it to establish the metric approximation property for reduced free group $C^*$-algebras \cite{Haagerup1978originalpaper}. Connes and Moscovici used it in their proof of the Novikov conjecture for Gromov hyperbolic groups \cite{ConnesHenri1990Novikovconjecture}, and it also played a role in major results concerning the Baum--Connes conjecture \cite{Lafforgue2002BaumConnes}.

In the operator-valued setting, one asks whether analogous estimates hold uniformly across matrix levels. Haagerup and Pisier \cite{HaagerupPisier1993operatorspace} established such a phenomenon for functions supported on the generators of the free group $\mathbb{F}_L$. Let $B(\Hil)$ denote the algebra of bounded operators on a Hilbert space $\Hil$, let $\lambda:\mathbb{F}_L\to \mathcal{U}(\ell^2(\mathbb{F}_L))$ be the left-regular representation, and let $C_r^*(\mathbb{F}_L)$ be the $C^*$-algebra generated by $\lambda$. We equip $B(\Hil)\otimes C_r^*(\mathbb{F}_L)$ with the minimal tensor norm. If $S$ is the set of generators of $\mathbb{F}_L$ and their inverses, and $(a_g)_{g\in S}\subseteq B(\Hil)$, then the following Khintchine inequality holds:
\begin{align*}
    \max \left\{\norm{\sum_{g\in S}a_ga_g^*},\norm{\sum_{g\in S}a_g^*a_g}\right\}^{\frac{1}{2}}
    &\le \norm{\sum_{g\in S}a_g\otimes \lambda(g)}\\
    &\le 2\max \left\{\norm{\sum_{g\in S}a_ga_g^*},\norm{\sum_{g\in S}a_g^*a_g}\right\}^{\frac{1}{2}}.
\end{align*}
Buchholz later extended this result to an operator-valued Haagerup inequality for homogeneous functions on free groups \cite{Buchholz1999OVfree}. To formulate the estimate, let $G$ be a group, let $\Hil$ be a Hilbert space, and let $f:G\to B(\Hil)$ have finite support:
\begin{align*}
    \operatorname{supp}(f)=\{g\in G: f(g)\ne 0\}.
\end{align*}
We denote
\begin{align}\label{equation: lambda(f)}
    \lambda(f)=\sum_{g\in G}f(g)\otimes \lambda(g)\in B(\Hil\otimes \ell^2(G)).
\end{align}
For $i,j\in \N$, define the spherical operators
\begin{align*}
    B_{i,j}(f)=(f(gh^{-1}))_{g\in S_i, h\in S_j}: \Hil^{S_j} \to \Hil^{S_i}.
\end{align*}
Buchholz proved that, for every $f:G\to B(\Hil)$ supported in $S_r$,
\begin{align*}
    \max_{0\le k\le r}\norm{B_{k,r-k}(f)}\le \norm{\lambda(f)}\le \sum_{k=0}^r \norm{B_{k,r-k}(f)}.
\end{align*}
Beyond the free-group case, Ricard and Xu established fixed-length Khintchine inequalities for reduced free products \cite{RicardXu2006reducedfreeproducts}. Caspers, Klisse, and Larsen developed graph-product analogues for $C^*$-algebras and Hecke algebras \cite{CaspersKlisseLarsen2021graphproduct}, while Ciobanu, Holt, and Rees proved that scalar rapid decay is preserved under graph products \cite{CiobanyHoltRees2013rdgraphproducts}. These results are especially relevant to the right-angled Artin group application below: graph-product methods exploit algebraic normal forms, whereas our estimates arise from centroid geometry and are expressed in terms of the spherical blocks $B_{i,j}(f)$. Toyota and Yang extended the operator-valued Haagerup inequality to general hyperbolic spaces \cite{ToyotaYang2026OVhyperbolic}. The operator-space formulation makes the polynomial factor transparent: in the free case, Buchholz's estimate is governed by the $r+1$ block operators $B_{k,r-k}(f)$, $0\le k\le r$. In a different direction, strong Haagerup inequalities have been obtained for holomorphic words and, subsequently, with operator coefficients \cite{KempSpeicher2007strongHaagerupscalar,delaSalle2009strongHaagerup}.

\subsection{Main results}
The paper's central contribution is a centroid-based mechanism that converts geometric counting estimates into operator-valued norm bounds. It extends the operator-valued Haagerup inequalities of Buchholz for free groups and Toyota and Yang for hyperbolic groups to median and coarse median geometry. Examples include limit groups, mapping class groups of oriented surfaces, groups acting "nicely" on finite-rank $\CAT$ cube complexes, and relatively hyperbolic groups with respect to those previous groups.
Because the relevant definitions are technical, we defer them to Sections~\ref{section: median spaces} and \ref{section: coarse median spaces} and state the main results first.
Our first class of examples consists of median groups. A group $G$ with finite generating set $S$ is called a median group if its Cayley graph $(G, S)$, equipped with the word metric, is a median metric space.
\begin{theorem}\label{theorem: median groups}
    Let $G$ be a median group of rank $\nu$. Then, for every $f:G\to B(\Hil)$ supported in $S_r$,
    \begin{align*}
        \max_{t\in \{0,\ldots, r\}}\norm{B_{t,r-t}(f)}\le \norm{\lambda(f)}\le \binom{\nu+r}{r}\max_{t\in \{0,\ldots, r\}}\norm{B_{t,r-t}(f)}.
    \end{align*}
\end{theorem}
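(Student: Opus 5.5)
Both inequalities come from decomposing $\lambda(f)$ as an operator on $\Hil\otimes\ell^2(G)=\bigoplus_{x}\Hil$, with matrix entries $(f(xy^{-1}))_{x,y\in G}$. Throughout I use that left multiplication is an isometry of the word metric $d$, so that $\ell(xy^{-1})=d(x,y)$ up to the inversion convention.

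\textbf{Lower bound.} The operator $B_{t,r-t}(f)$ is the compression $P_{S_t}\lambda(f)P_{S_{r-t}}$, where $P_A$ denotes the orthogonal projection onto $\Hil\otimes\ell^2(A)$. Hence
\begin{align*}
\norm{B_{t,r-t}(f)}\le\norm{\lambda(f)}
\end{align*}
for every $t$, and this needs no geometry.

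\textbf{Upper bound: centroid decomposition.} For $x,y$ with $d(x,y)=r$, let $m=m(1,x,y)$ be their median with the identity. Then $d(x,m)+d(m,y)=r$. The entry $(x,y)$ of $\lambda(f)$ is assigned to the block indexed by the centroid $m$ and the split $t=d(x,m)$. By translating $m$ to the identity, such an entry becomes an entry of $B_{t,r-t}(f)$ evaluated at the pair $(m^{-1}x,m^{-1}y)$.

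I expect the centroid framework developed earlier to state exactly this. Its inputs are three growth conditions:
\begin{enumerate}
\item the number of admissible pairs $(t,\cdot)$ attached to a given pair $(x,y)$ is controlled;
\item each block is a direct sum over centroids of translated copies of $B_{t,r-t}(f)$, so its norm is at most $\max_t\norm{B_{t,r-t}(f)}$;
\item the blocks combine through a Schur multiplier, or a triangle-inequality count, whose multiplicity is the number of centroid types.
\end{enumerate}
I would invoke that result, with the Schur multipliers here being indicator functions that factor.

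\textbf{Counting step.} In a median space of rank $\nu$, the median interval $[x,y]$ isometrically embeds, up to combinatorial structure, into an $\ell^1$-product of at most $\nu$ trees or intervals. The natural decomposition indexes the relevant partitions of $r$ by compositions $r=r_0+r_1+\cdots+r_\nu$ into $\nu+1$ nonnegative parts, one part for each direction plus the radial part. There are exactly $\binom{\nu+r}{r}$ such compositions. For each composition, the operator piece should be a block-diagonal sum, over centroids, of isometric copies of submatrices of some $B_{t,r-t}(f)$. This would give
\begin{align*}
\norm{\lambda(f)}\le\binom{\nu+r}{r}\max_t\norm{B_{t,r-t}(f)}.
\end{align*}

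\textbf{Main obstacle.} The hard step is proving that each piece is genuinely block diagonal with disjoint row and column supports. This requires that $(x,y)$ determine their centroid uniquely, which holds because medians are unique, and that within a fixed composition distinct centroids give orthogonal row spaces and orthogonal column spaces. I would first check this using the hyperplane structure of the cube complex. Distinct centroids $m\neq m'$ with the same composition type should be separated by a hyperplane. That hyperplane should force the sets of $x$ with $d(x,m)=t$ lying in the relevant cone to be disjoint, and likewise on the $y$ side. Rank $\nu$ bounds the number of pairwise crossing hyperplane families, and it is this bound that limits the number of cone types to $\binom{\nu+r}{r}$.
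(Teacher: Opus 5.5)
Your lower bound is correct and matches the paper: $B_{t,r-t}(f)$ is a compression of $\lambda(f)$, and the paper obtains it as $M_{x_0,t}(f)$. The upper bound has a genuine gap. It rests on a decomposition that you assert but do not construct, and the mechanism you propose for it fails.

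You need $\binom{\nu+r}{r}$ pieces in which distinct centroids have pairwise disjoint row supports \emph{and} pairwise disjoint column supports. Test this on $G=\mathbb{F}_2\times\mathbb{F}_2$ with the standard generators of the two factors; its Cayley graph is a product of two $4$-valent trees, of rank $2$. Take an entry with points $u=g^{-1}$, $v=h^{-1}$ at distance $r$, and centroid $c=m(1,u,v)$ computed coordinatewise. Put $t_k=d(u^k,c^k)$ and $s_k=d(v^k,c^k)$.
\begin{itemize}
\item Suppose the composition records the centroid relative to the row, so $r_k=t_k$ and $r_0=s_1+s_2$. Then each row meets one centroid per piece.
\item A generic column $v$, however, meets $r_0+1$ distinct centroids in the same piece, one for each split $s_1+s_2=r_0$. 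All of them are realized because the trees branch.
\item For example, the centroids at offsets $(r_0,0)$ and $(0,r_0)$ from $v$ are separated by hyperplanes, yet both lie in $[1,v]$. So $v$ lies in both column shadows, and hyperplane separation does not give disjointness.
\end{itemize}
The real obstruction is not two centroids of the same type. It is a single row or column meeting up to $|[x_0,y]\cap S_t(X)|$ centroids at level $t$. For such a piece the available bound is only $\sqrt{r_0+1}\,\max_t\norm{B_{t,r-t}(f)}$. Indexing both sides by $(t_1,t_2,s_1,s_2)$ restores orthogonality, but it produces $\binom{r+3}{3}$ pieces, and $\binom{r+2\nu-1}{r}$ for a product of $\nu$ trees. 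In a general rank-$\nu$ median graph there are not even global directions with which to define types coherently across rows.

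The missing idea is that orthogonality inside $\ell^2(G)$ is unnecessary. Proposition~\ref{proposition: main decomposition-Haagerup} lifts to $\ell^2(X\times G)$ and writes
\[
\lambda(f)=\sum_{t=0}^r(\Id_\Hil\otimes U_t)^*M_t(f)(\Id_\Hil\otimes V_t).
\]
Here $M_t(f)=\bigoplus_c M_{c,t}(f)$ is a genuine direct sum on the larger space. The overlap is absorbed by the multiplicity embeddings $U_t\delta_g=\sum_{c\in A_t(g)}\delta_{(c,g)}$ and its column analogue $V_t$.

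Lemma~\ref{lemma: inclusions of A and B}, which needs only modularity, gives $\norm{U_t}^2\le\sup_y|[x_0,y]\cap S_t(X)|$ and $\norm{V_t}^2\le\sup_y|[x_0,y]\cap S_{r-t}(X)|$. So each level costs the geometric mean of the row and column multiplicities; neither has to equal $1$. Cauchy--Schwarz, the Chatterji--Ruane count and the hockey-stick identity then give
\[
\sum_t\norm{U_t}\norm{V_t}\le\sum_{t=0}^r\sup_y|[x_0,y]\cap S_t(X)|\le\sum_{t=0}^r\binom{\nu+t-1}{\nu-1}=\binom{\nu+r}{r}.
\]
Thus $\binom{\nu+r}{r}$ is a sum of interval-sphere sizes, and that is where the rank enters; it is not a count of cone types.

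You also skip the rectangle property of Lemma~\ref{lemma: lower bound median}. On the band $gh^{-1}\in S_r$, the entries with centroid $c$ and split $t$ are exactly $G_{c,t}\times G_{c,r-t}$, and uniqueness of the median is what gives the reverse inclusion. This is what makes each centroid block a compression of a right translate of $B_{t,r-t}(f)$. Without it, a block is only a $0$--$1$ Schur multiple of $B_{t,r-t}(f)$, and its norm is not controlled by $\norm{B_{t,r-t}(f)}$.

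Finally, with your convention $(f(xy^{-1}))_{x,y}$, the centroid should be $m(1,x^{-1},y^{-1})$, and the translation acts on the right, $x\mapsto xc$.
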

Right-angled Artin groups provide a concrete family of median groups. If $\Gamma=(V,E)$ is a finite graph, its right-angled Artin group is
\begin{align*}
    G(\Gamma)=\langle g_v: v\in V\mid [g_u,g_v]=1\text{ for every }(u,v)\in E\rangle.
\end{align*}
Theorem~\ref{theorem: median groups} gives the polynomial factor $\binom{\omega(\Gamma)+r}{r}$, where $\omega(\Gamma)$ is the clique number of $\Gamma$; see Corollary~\ref{corollary: right angled artin groups}.

Theorem~\ref{theorem: median groups} does not follow from the scalar rapid decay or existing graph-product Khintchine inequalities \cite{CaspersKlisseLarsen2021graphproduct}. Those graph-product results exploit algebraic normal forms, whereas our argument uses only median geometry and therefore applies to arbitrary finite-rank median groups and to groups acting freely on finite-rank $\CAT$ cube complexes. However, two obstructions remain: median spaces need not have unique geodesics, and scalar rapid-decay estimates do not pass directly to operator-valued coefficients. The main operator-algebraic ingredient is Proposition~\ref{proposition: main decomposition-Haagerup}, a reusable block decomposition of $\lambda(f)$. In the median setting, it shows that the same spherical slices of intervals that govern scalar counting estimates also govern the operator-valued upper bound. A rectangular factorization of the associated Schur multipliers then gives the complementary lower bound; see Lemma~\ref{lemma: lower bound median}.

We next extend the result to coarse median spaces; see \cite{Bowditch2013coarsemedian}. Informally, a metric space $(X,d)$ is coarse median if it admits a coarsely Lipschitz ternary operation and every finite subset has a finite median approximation with constants depending only on its cardinality. Further developments in coarse median geometry, including the structure of coarse intervals, rank, and iterated medians, can be found in \cite{SpakulaWright2017propertyA, NibloWrightZang2019fourpoint, NibloWrightZang2021}. A finitely generated group $G$ is coarse median if some (equivalently, every) Cayley graph associated with a finite generating set is a coarse median space. This notion extends hyperbolicity to higher-rank settings.
\begin{theorem}\label{theorem: coarse median groups}
    Let $G$ be a coarse median group of rank at most $\nu$. Then there exist a function $Q:\N\to\R_+$ satisfying
    \begin{align*}
        Q(r)=o(r^{1+\nu/2+\varepsilon})\qquad\text{for every }\varepsilon>0,
    \end{align*}
    and an affine function $L:\N\to\R_+$ such that, for every $f:G\to B(\Hil)$ supported in $S_r$,
    \begin{align*}
        \norm{\lambda(f)}\le Q(r)\max_{\substack{0\le i \le L(r)\\ |i-j|\le r\le i+j}}\norm{B_{i,j}(f)}.
    \end{align*}
\end{theorem}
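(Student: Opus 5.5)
The plan is to deduce the statement from the general centroid decomposition, Proposition~\ref{proposition: main decomposition-Haagerup}, applied to a suitable centroid map on the Cayley graph of $G$.

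\textbf{Centroid map.} For a triple $(g,h,1)$ with $gh^{-1}\in S_r$, take the centroid to be a coarse median point $\mu(g,h,1)$. Since the three points $g,h,1$ form a finite set of cardinality three, Bowditch's axioms give a median approximation with uniform error, and this error does not depend on $r$. Write $x=\mu(g,h,1)$. The coarse median axioms then give
\begin{align*}
d(g,x)+d(x,h)&\le r+C,\\
d(g,x)+d(x,1)&\le \ell(g)+C,\\
d(h,x)+d(x,1)&\le \ell(h)+C.
\end{align*}
These are the three centroid growth conditions, with additive constants.

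\textbf{Decomposition.} Proposition~\ref{proposition: main decomposition-Haagerup} writes $\lambda(f)$ as a sum, over the radii $t=\ell(x)$ and the defect parameters $(a,b)$ with $a+b\le r+C$, of products of three block operators. Only one of these factors carries the coefficients $f(gh^{-1})$; the other two are partial isometries or contractions built from the counting of centroids.

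For the coefficient factor, I would use the triangle inequality (the coarse geodesic property) to recover $\ell(g)=i$ and $\ell(h)=j$ up to $O(1)$. Here $|i-j|\le r$, and after absorbing the constant, $r\le i+j$. So the coefficient block is a compression of some $B_{i,j}(f)$. The index $i$ is at most $t+a\le L(r)$ for an affine function $L$, because $t$ can be taken at most of order $r$ after translating so that the centroid lies near the identity; this is the usual normalization.

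The number of summands is $O(r^2)$ choices of $(a,b)$, together with the combinatorics of the centroid fibres.

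\textbf{Main obstacle: counting the centroid fibres.} The factors other than the coefficient one have norm controlled by the square root of the maximal number of centroids $x$, at a given distance, compatible with fixed $(g,h)$. Bounding this number requires control of the cardinality of slices of coarse intervals. In rank $\nu$, the coarse interval $[1,g]$ intersected with a sphere of radius $s$ has cardinality $O(s^{\nu})$, up to a $\varepsilon$-loss coming from the coarse errors. This is the finite-rank interval growth result of Niblo--Wright--Zhang type.

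To get the interval estimate with error independent of the number of points, I would use the coarse iterated median: the iterated median of a finite set should stay within an error independent of the cardinality of that set. I expect this is what the abstract's construction provides, and it is the hardest step.

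The first thing I would try is to iterate the four-point characterization by induction on rank, projecting onto coarse hyperplanes. Once the fibre count is $O(s^{\nu}\cdot s^{\varepsilon})$, taking the square root in the Cauchy--Schwarz/Schur step gives a factor $r^{\nu/2+\varepsilon}$. Summing over the $O(r)$ essential radius parameters then gives an additional factor $r$, which yields $Q(r)=o(r^{1+\nu/2+\varepsilon})$.
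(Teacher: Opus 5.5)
There is a genuine gap at the step where you call the coefficient factor ``a compression of some $B_{i,j}(f)$'', and it comes from your choice of centroid. With $x=\mu(g,h,1)$, the block produced by Proposition~\ref{proposition: main decomposition-Haagerup} is, after normalizing, the Schur product of $\lambda(f)$ with the $\{0,1\}$-matrix $\mathbf{1}_{c=\mu(g^{-1}\cdot x_0,\,h^{-1}\cdot x_0,\,x_0)}$. This matrix depends jointly on $(g,h)$, and it is a compression by projections only if it factors into row and column indicators. In the exact median case that factorization is Lemma~\ref{lemma: lower bound median}, which relies on uniqueness of the median and exact geodesic identities. Neither survives in a coarse median space, so your argument gives no bound of $\norm{M_c(f)}$ by $\max\norm{B_{i,j}(f)}$.

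Your additive-constant inequalities are also false in general. The axioms only give $\mu(g,h,1)\in[g,h]_\lambda$, a coarse interval defined through $\mu$ itself, not proximity to geodesics. For example, on $\mathbb{Z}^2$ with the king's-move generators and the coordinatewise median, $g=(N,0)$ and $h=(N,r)$ give $x=g$ and $d(h,x)+d(x,1)=\ell(h)+r$.

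The exponent count does not close either. Proposition~\ref{proposition: main decomposition-Haagerup} has two counting factors: $\norm{U}^2=\sup_g|A(g)|$, with the row fixed and $p\in S_r$ varying, and $\norm{V}^2=\sup_h|B(h)|$, with the column fixed. Counting centroids ``compatible with fixed $(g,h)$'' is vacuous, since that centroid is unique. With a centroid depending on both indices, both sets are polynomially large. The natural bound is then $\norm{U}\norm{V}=o(r^{\nu+\varepsilon})$, which is not $o(r^{1+\nu/2+\varepsilon})$ once $\nu\ge 3$.

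The missing idea is to use the cardinality-independent iterated median to build a centroid depending only on the row index: $\mathfrak{c}(p,g)=g\cdot q_{\ell(p)}(g^{-1}\cdot x_0)$, where $q_r(x)\in[x_0,z]_\kappa$ for all $z\in B_r(x)$ and $d(x,q_r(x))\le L(r)$. This choice gives three things.
\begin{enumerate}
    \item $|A(g)|=1$, so $\norm{U}\le 1$.
    \item $B(h)\subseteq[x_0,h^{-1}\cdot x_0]_\kappa$ with diameter $O(r)$, so Bowditch's Proposition~9.8 gives $\norm{V}^2=o(r^{\nu+\varepsilon})$.
    \item $M_c(f)$ is a pure row compression of $\lambda(f)$. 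After translating $c$ to $x_0$, it is dominated by the band $i\le L(r)$, $|i-j|\le r\le i+j$.
\end{enumerate}
The factor $2r+1$ then comes from the row/column degree bound on this band, not from summing over parametrization layers; the paper uses the trivial parametrization. So the interval counting is imported from Bowditch, and the iterated median is needed only to make $\kappa$ uniform in $|B_r(x)|$. That uniformity is obtained in Proposition~\ref{proposition: coarse approximation} by iterating Lemma~\ref{lemma: wright corollary} with geometric error reduction, not by induction on rank.
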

The larger range of pairs $(i,j)$, compared with the relation $r=i+j$ in the median case, is offset by the exponent $1+\nu/2\le\nu$ for $\nu\ge2$. Together with \cite{ToyotaYang2026OVhyperbolic}, this shows that every finite-rank coarse median group satisfies an operator-valued Haagerup inequality. Examples of coarse median groups include:
\begin{enumerate}
    \item Mapping class groups of oriented surfaces \cite[Theorem 8.2]{Bowditch2014embedding}.
    \item Groups that are relatively hyperbolic with respect to coarse median groups \cite[Theorem 1.1]{Bowditch2013invariancehyperbolicity}.
    \item Direct products of coarse median groups \cite[Example 1.4.2.3]{Bowditch2019notes}.
    \item Limit groups \cite[Example 1.4.2.5]{Bowditch2019notes}.
    \item Groups admitting a proper cellular action, with uniformly bounded stabilizers, on a finite-rank $\CAT$ cube complex \cite[Corollary 3.5, Remark 3.6]{Sapir2015centroids}.
\end{enumerate}
In particular, Theorem \ref{theorem: coarse median groups} applies to the groups listed above.

The coarse median theorem requires a genuinely new geometric input. A natural attempt would be to approximate a coarse median space by a finite median model and then apply Theorem~\ref{theorem: median groups}. This procedure is effective in the scalar setting, where the estimates reduce to the cardinality of coarse intervals; see \cite[Section 9]{Bowditch2014embedding}. In contrast, it is insufficient here: noncommutativity prevents a direct passage to spherical blocks, while the non-uniqueness of geodesics and medians makes the relevant block range unstable. Even in the hyperbolic case, the relation $i+j=r$ must be thickened to $r\le i+j\le r+\delta+1$; in higher-rank coarse median spaces, no uniformly bounded thickening suffices.

Our solution is a cardinality-independent coarse iterate median construction. For $Z=\{x_1,\ldots,x_n\}\subseteq X$, the iterated median is the unique point $q\in X$ satisfying
\begin{align*}
    \bigcap_{i=1}^n [x_i,x_0]=[q,x_0].
\end{align*}
In rank $\nu$, the point $q$ is already determined by at most $\nu$ of the inputs. A direct coarse analogue would normally accumulate an error proportional to the number of points. Proposition~\ref{proposition: coarse meet existence} avoids this accumulation: for every bounded set $Z\subseteq X$, it produces a single point lying in a uniformly thickened interval $[x_0,z]$ for every $z\in Z$, with an error independent of $|Z|$ and with displacement controlled linearly by the radius of $Z$. The proof uses an iterative error-reduction argument that decreases the interval error geometrically while keeping the total displacement linear. This construction is the key geometric ingredient behind Theorem~\ref{theorem: coarse median groups}; compare Lemma~\ref{lemma: distance of meets} in the exact median setting.

\subsection{Centroid Haagerup inequalities} At the general level, we identify the operator-valued content of the centroid method. Centroid constructions already appear in Behrstock and Minsky's proof of rapid decay for mapping class groups \cite{BehrstockMinsky2011centroid}, and Sapir subsequently isolated an abstract centroid property encompassing this strategy \cite{Sapir2015centroids}; the method is also related in spirit to Chatterji and Ruane's median-type approach \cite{ChatterjiRuane2005median}. Our parametrized decomposition turns these scalar counting conditions into block operators $M_t(f)$ and makes each centroid condition control a distinct operator-norm factor. Thus, the framework does not merely prove a polynomial estimate: it records where each part of the polynomial loss comes from.

Let $(X,d)$ be a metric space with base point $x_0\in X$. Throughout the paper, we assume $(X,d)$ is a connected, uniformly locally finite graph endowed with its graph metric. Let $G$ be a finitely generated group acting isometrically on $X$, and define the orbit length
\begin{align*}
    \ell(g)=d(x_0,g\cdot x_0),\qquad g\in G.
\end{align*}
We assume that this length is proper. Because $X$ is uniformly locally finite, the orbit length is proper if and only if the stabilizer $\operatorname{Stab}_G(x_0)$ is finite.

Let $\mathfrak{c}:G\times G\to X$ be a map. We refer to $X$ as the centroid space and set
\begin{align*}
    \mathcal{C}_p=\{\mathfrak{c}(p,h):h\in G\},\quad p\in G.
\end{align*}
We say that $(G,\mathfrak{c})$ satisfies the centroid property (C), with polynomial bounds $P_1, P_2, P_3:\N\to\N$, if
\begin{enumerate}[label=(C\arabic*)]
    \item\label{property c1} For every $h\in G$ and $r\in \N$, 
    \begin{align*}
        \#\{\mathfrak{c}(g,h): \ell(g)=r\} \le P_1(r).
    \end{align*}
    \item\label{property c2} For every $g\in G$, $|\mathcal{C}_g|\le P_2(\ell(g))$.
    \item\label{property c3} For every $h\in G$ and $r\in \N$,
    \begin{align*}
        \#\{g^{-1}\cdot\mathfrak{c}(g,gh): \ell(g)=r\}\le P_3(r).
    \end{align*}
\end{enumerate}
We formulate Sapir's centroid conditions using spheres rather than balls; the two formulations are polynomially equivalent. One of the main ideas of this paper is to stratify the centroid fibers by auxiliary parameters and decompose the left-regular representation accordingly.

Let $(\Theta_r)_{r\in\N}$ be a family of nonempty finite sets. For each $g\in G$, suppose that a map $\theta_g:\mathcal{C}_g\to\Theta_{\ell(g)}$ is given. We call $(\theta_g)_{g\in G}$ the parametrization maps, $(\Theta_r)_{r\in\N}$ the parametrization spaces, and $(\theta,\Theta)=(\theta_g,\Theta_r)_{g\in G,r\in\N}$ a parametrization family for $(G,\mathfrak{c})$.

The parametrization maps are not part of Sapir's definition and, abstractly, impose no additional restriction. Indeed, Property~\ref{property c2} provides sets $(\mathcal{X}_r)_{r\in\N}$ with $|\mathcal{X}_r|=P_2(r)$ and injective maps $\theta_g:\mathcal{C}_g\to\mathcal{X}_{\ell(g)}$. When chosen geometrically, however, the parametrizations encode useful information about the group, as we will see in Section~\ref{section: median spaces}.

Sapir proved that groups $(G,\mathfrak{c})$ satisfying (C) also have property (RD), with polynomial growth rate $(P_1P_2P_3)^{1/2}$ \cite[Theorem 2.3]{Sapir2015centroids}.

The following proposition is the paper's main blueprint for operator-valued Haagerup inequalities. It converts a parametrized centroid map into a block-diagonal decomposition of $\lambda(f)$ and separates the geometric counting terms from the norms of the individual blocks.
\begin{proposition}\label{proposition: main decomposition-Haagerup}
    Let $G$ be a finitely generated group acting isometrically on $(X,d)$ and $r\in \N$. Let $\mathfrak{c}: G\times G\to X$ be a centroid map.
    Let $f:G\to B(\Hil)$ be supported in $S_r$, and let $(\theta,\Theta)$ be a parametrization family for $(G,\mathfrak{c})$.
    Then, there are operators $U_t,V_t:\ell^2(G)\to\ell^2(X\times G)$, independent of $f$, and a family of operators  
    \begin{align}\label{equation: M_x}
        M_t(f):=\bigoplus_{c\in X}M_{c,t}(f) \in B(\Hil\otimes \ell^2(X\times G));\quad t\in \Theta_r,
    \end{align}
    such that the following hold.
    \begin{enumerate}[label=(\thetheorem.\arabic*)]
        \item\label{item: norm of U} For every $t\in \Theta_r$, 
        \begin{align*}
            \norm{U_t}^2=\sup_{h\in G}\#\{\mathfrak{c}(g,h)|\ g\in S_r,\ \theta_g(\mathfrak{c}(g,h))=t\}<\infty.
        \end{align*}
        \item\label{item: norm of V} For every $t\in \Theta_r$,
        \begin{align*}
            \norm{V_t}^2=\sup_{h\in G}\#\{g^{-1}\cdot \mathfrak{c}(g,gh)|\ g\in S_r,\ \theta_g(\mathfrak{c}(g,gh))=t\}<\infty.
        \end{align*}
        \item\label{item: decomposition} We have 
        \begin{align*}
            \lambda(f)=\sum_{t\in \Theta_r}(\Id_{\Hil}\otimes U_t)^* M_{t}(f)(\Id_{\Hil}\otimes V_t).
        \end{align*}
        In particular,
        \begin{align*}
            \norm{\lambda(f)}\le \sum_{t\in \Theta_r}\norm{U_t}\norm{V_t}\sup_{c\in X}\norm{M_{c,t}(f)}.
        \end{align*}
    \end{enumerate}
\end{proposition}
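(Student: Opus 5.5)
We write $\lambda(f)$ in matrix form and route each matrix entry through the centroid. Acting on $\Hil\otimes\ell^2(G)$ with basis $\delta_h$,
\begin{align*}
    \lambda(f)(\xi\otimes\delta_h)=\sum_{g\in S_r} f(g)\xi\otimes\delta_{gh}.
\end{align*}
So the $(k,h)$ entry is $f(kh^{-1})$ when $kh^{-1}=g\in S_r$. For such a pair we use the point $c=\mathfrak{c}(g,h)$. The label $t=\theta_g(c)\in\Theta_r$ is defined because $c\in\mathcal{C}_g$ and $\ell(g)=r$. Since the sets $\{g\in S_r:\theta_g(\mathfrak{c}(g,h))=t\}$, $t\in\Theta_r$, partition $S_r$ for each fixed $h$, we get $\lambda(f)=\sum_t\lambda_t(f)$. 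Here $\lambda_t(f)$ keeps only those terms with label $t$. It remains to factor each $\lambda_t(f)$ as $U_t^*M_t(f)V_t$.

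\textbf{Construction.} We index $\ell^2(X\times G)$ by pairs $(c,y)$ and make both $U_t$ and $V_t$ partial isometries onto $0/1$ patterns.

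For $U_t$ we use the output variable. Writing $k=gh$, the pair $(g,h)$ determines $c=\mathfrak{c}(g,h)$. Put
\begin{align*}
    U_t\delta_k=\sum_{c\in A_t(k)}\delta_{(c,k)},\qquad A_t(k)=\{\mathfrak{c}(kh^{-1},h):kh^{-1}\in S_r,\ \theta(\cdot)=t\}.
\end{align*}
Then $U_t^*U_t$ is diagonal with entries $|A_t(k)|$.

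For $V_t$ we use the input $h$ together with a translated centroid, so that the count in \ref{item: norm of V} appears. Put
\begin{align*}
    V_t\delta_h=\sum_{c\in B_t(h)}\delta_{(c,h)}.
\end{align*}
The sets $A_t$ and $B_t$ must be chosen so that each diagonal entry of $U_t^*U_t$ (resp.\ $V_t^*V_t$) equals exactly one of the cardinalities in \ref{item: norm of U} (resp.\ \ref{item: norm of V}). The supremum over the diagonal is then the stated norm, and finiteness follows because $S_r$ is finite.

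We then take $M_{c,t}(f)$ to be the block that sends $\delta_{(c,h)}$ to $\sum f(g)\otimes\delta_{(c,gh)}$. The sum runs over those $g\in S_r$ whose centroid datum for the pair is $c$ and whose label is $t$. Expanding $(\Id\otimes U_t)^*M_t(f)(\Id\otimes V_t)$ should give each term $f(g)\otimes|gh\rangle\langle h|$ exactly once. This is because the centroid $c$ of a pair is unique, so the $0/1$ incidences pick a single path.

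\textbf{Main obstacle.} The hardest step is getting the bookkeeping of the two sides to match. The sets in \ref{item: norm of U} are indexed by $h$ and those in \ref{item: norm of V} by $gh$ with a translate by $g^{-1}$, so the block coordinates must be chosen with care.

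My first attempt is to let the $X$-coordinate on the $U$ side be $c=\mathfrak{c}(g,h)$ and on the $V$ side be $g^{-1}c$, reindexing $k=gh$ on one side. The block must then be twisted by translation by $g$. Since the action is isometric, this twist is a fixed relabeling between the translated centroid and $c$.

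One then has to check two things:
\begin{itemize}
    \item each $M_{c,t}$ has at most one entry per (row, column) pair for a given $g$;
    \item the block-diagonal structure over $c$ is preserved.
\end{itemize}
If this works, the final norm bound follows at once from the triangle inequality over $t$, together with $\norm{\bigoplus_c M_{c,t}}=\sup_c\norm{M_{c,t}}$ and $\norm{\Id\otimes U}=\norm{U}$.
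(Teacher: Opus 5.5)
There is a genuine gap, and it sits exactly at the step you flag as the main obstacle. The first problem is your centroid convention. You label the entry in row $k=gh$, column $h$ by $\mathfrak{c}(g,h)$, with $h$ the \emph{input}. The labels on row $k$ are then $\{\mathfrak{c}(g,g^{-1}k): g\in S_r\}$, where the second argument of $\mathfrak{c}$ moves with $g$. The set in \ref{item: norm of U} is instead $\{\mathfrak{c}(g,k): g\in S_r\}$, with the second argument frozen at the row index. Likewise, your column set $\{g^{-1}\cdot\mathfrak{c}(g,h)\}$ is not the set $\{g^{-1}\cdot\mathfrak{c}(g,gh)\}$ of \ref{item: norm of V}. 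For an arbitrary centroid map these have different cardinalities, so your $U_t,V_t$ cannot satisfy the stated norm identities. For the same reason, your layers $\theta_g(\mathfrak{c}(g,h))=t$ are not the layers appearing in the statement. The term $f(g)\otimes\lambda(g)$ has to be routed through $\mathfrak{c}(g,gh)$, whose second argument is the \emph{output}; this is what the paper's operators $V_{c,g}$ do.

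The second problem is that your factorization is incompatible with the required form $M_t(f)=\bigoplus_c M_{c,t}(f)$. A block-diagonal operator sends $\delta_{(c,h)}$ into $\Hil\otimes\ell^2(\{c\}\times G)$, so each entry needs one and the same $X$-label on the $U$ side and on the $V$ side. Labelling by $c$ on one side and by $g^{-1}\cdot c$ on the other violates this. The translation by $g$ is also not a fixed relabelling of $X$: $g=kh^{-1}$ changes from entry to entry within a single row or column, and isometry of the action does not help.

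The missing idea is to normalize by the row index. Label the entry $(k,h)$, with $g=kh^{-1}\in S_r$, by $c(k,h)=k^{-1}\cdot\mathfrak{c}(g,k)$. Since $k=gh$, this also equals $h^{-1}\cdot(g^{-1}\cdot\mathfrak{c}(g,gh))$. Consequently:
\begin{itemize}
\item the row set $A_t(k)$ is the $k^{-1}$-translate of the set in \ref{item: norm of U};
\item the column set $B_t(h)$ is the $h^{-1}$-translate of the set in \ref{item: norm of V}.
\end{itemize}
A translation depending only on the row, or only on the column, is a bijection of $X$. So the cardinalities, and hence $\norm{U_t}^2$ and $\norm{V_t}^2$, are exactly as stated. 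Now set $(M_{c,t}(f))_{k,h}=\mathbf{1}_{c=c(k,h)}\mathbf{1}_{\theta_g(\mathfrak{c}(g,k))=t}f(kh^{-1})$. One checks that $c(k,h)\in A_t(k)\cap B_t(h)$ for $t=\theta_g(\mathfrak{c}(g,k))$, so each entry $f(kh^{-1})$ is recovered exactly once. Your proposal never defines $B_t$ and leaves this verification conditional (``if this works''), so the core of the argument is missing.
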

Under the centroid conditions, we have the following Haagerup-type inequality.
\begin{theorem}\label{theorem: main Haagerup inequality}
    Let $G$ be a finitely generated group acting isometrically on $(X,d)$. Let $\mathfrak{c}: G\times G\to X$ be a centroid map.
    Let $f:G\to B(\Hil)$ be supported in $S_r$. Let $(\theta,\Theta)$ be a parametrization family for $(G,\mathfrak{c})$, and let $(U_t,V_t,M_t)_{t\in \Theta_r}$ be the operators constructed in Proposition~\ref{proposition: main decomposition-Haagerup}. If Properties~\ref{property c1} and \ref{property c3} hold, then
    \begin{align*}
        \norm{\lambda(f)}\le \sqrt{P_1(r)P_3(r)}\sum_{t\in \Theta_r}\sup_{c\in X}\norm{M_{c,t}(f)}.
    \end{align*}
    Moreover, if $|\Theta_r|\le P_2(r)$ for every $r\in\N$, then
    \begin{align*}
        \norm{\lambda(f)}\le P_2(r)\sqrt{P_1(r)P_3(r)}\sup_{(c,t)\in X\times \Theta_r}\norm{M_{c,t}(f)}.
    \end{align*}
\end{theorem}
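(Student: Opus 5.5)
Theorem~\ref{theorem: main Haagerup inequality} will follow directly from Proposition~\ref{proposition: main decomposition-Haagerup}. Its only new content is a comparison of the norms $\norm{U_t}$, $\norm{V_t}$ with the centroid bounds. So I would start from the inequality in \ref{item: decomposition},
\begin{align*}
    \norm{\lambda(f)}\le \sum_{t\in \Theta_r}\norm{U_t}\norm{V_t}\sup_{c\in X}\norm{M_{c,t}(f)},
\end{align*}
and then bound $\norm{U_t}$ and $\norm{V_t}$ uniformly in $t$.

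For $\norm{U_t}$, use the formula in \ref{item: norm of U}. For fixed $h\in G$, the set $\{\mathfrak{c}(g,h): g\in S_r,\ \theta_g(\mathfrak{c}(g,h))=t\}$ is contained in $\{\mathfrak{c}(g,h): \ell(g)=r\}$, because we are only imposing an extra constraint on $g$. By Property~\ref{property c1}, the latter set has at most $P_1(r)$ elements. Taking the supremum over $h$ gives $\norm{U_t}^2\le P_1(r)$.

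For $\norm{V_t}$, the same argument with \ref{item: norm of V} and Property~\ref{property c3} gives $\norm{V_t}^2\le P_3(r)$. Hence $\norm{U_t}\norm{V_t}\le \sqrt{P_1(r)P_3(r)}$ for every $t\in\Theta_r$. Substituting this into the displayed inequality yields the first estimate.

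For the second estimate, bound each term $\sup_{c\in X}\norm{M_{c,t}(f)}$ by $\sup_{(c,t)\in X\times\Theta_r}\norm{M_{c,t}(f)}$. The sum over $t$ then contributes at most the factor $|\Theta_r|\le P_2(r)$.

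There is no real obstacle here. The only point needing care is that the sets in \ref{item: norm of U} and \ref{item: norm of V} are sets of centroid points, not counts of group elements $g$. One must check that restricting to the fiber $\theta_g(\cdot)=t$ yields a subset of the set counted in (C1), respectively (C3). This is immediate, since both are images of subsets of $S_r$ under the same map $g\mapsto\mathfrak{c}(g,h)$, respectively $g\mapsto g^{-1}\cdot\mathfrak{c}(g,gh)$.
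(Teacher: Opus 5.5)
Your proposal is correct and follows the paper's proof essentially step for step. The paper also starts from the norm inequality in Item~\ref{item: decomposition}. It then obtains $\norm{U_t}^2\le P_1(r)$ and $\norm{V_t}^2\le P_3(r)$ by dropping the $\theta$-constraint (Lemma~\ref{lemma: bound on norm of U_t and V_t}), and deduces the second estimate from the $\ell^1$--$\ell^\infty$ bound together with $|\Theta_r|\le P_2(r)$.
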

The theorem makes the role of the centroid hypotheses explicit: Property~\ref{property c1} controls the row embedding $U_t$, Property~\ref{property c3} controls the column embedding $V_t$, and, for surjective parametrizations, Property~\ref{property c2} controls the number of layers $|\Theta_r|$. This separation allows the scalar centroid method to extend to the operator-valued setting.

\subsection*{Outline of the paper} Section~\ref{section: proofs} proves the parametrized centroid decompositions in Proposition \ref{proposition: main decomposition-Haagerup}, and then derives the general operator-valued Haagerup estimate in Theorem~\ref{theorem: main Haagerup inequality}. Section~\ref{section: lower bounds} establishes complementary lower bounds by expressing the block operators $M_{c,t}(f)$ as Schur multipliers and applying Grothendieck-type factorization. Section~\ref{section: median spaces} specializes the framework to median metric spaces, relates the centroid blocks to Buchholz's spherical operators, proves Theorem~\ref{theorem: median groups}, and gives applications to groups acting on finite-rank $\CAT$ cube complexes, including right-angled Artin groups. Finally, Section~\ref{section: coarse median spaces} develops the coarse iterated-median construction, verifies the corresponding coarse iterate median and centroid estimates, and proves the operator-valued Haagerup inequality stated in Theorem~\ref{theorem: coarse median groups}.

\section{Operator decompositions}\label{section: proofs}
For $(c,g)\in X\times G$, define $V_{c,g}\in B(\ell^2(G))$ on the canonical basis by
\begin{align*}
    V_{c,g}\delta_h=\begin{cases}
        \delta_{gh},&\text{ if }\mathfrak{c}(g,gh)=c,\\
        0,&\text{ otherwise}.
    \end{cases}
\end{align*}
Thus, $V_{c,g}$ agrees with the shift $\lambda(g)$ on the basis vectors for which the centroid condition is satisfied. 

We begin with a parametrized decomposition of the left-regular representation $\lambda$ of $G$, analogous to \cite[Lemma 2.4]{Buchholz1999OVfree}.
\begin{proposition}\label{proposition: decomposition}
    Let $G$ be a finitely generated group acting isometrically on a metric space $(X,d)$. Let $\mathfrak{c}:G^2\to X$ be a centroid map and $(\theta,\Theta)$ be a parametrization family for $(G,\mathfrak{c})$.
    Then, for any $g\in G$, we have
    \begin{align*}
        \lambda(g)=\sum_{c\in \mathcal{C}_g}V_{c,g}=\sum_{t\in \Theta_{\ell(g)}}\sum_{c\in \mathcal{C}_g}\mathbf{1}_{\theta_g(c)=t} V_{c,g},
    \end{align*}
    where both equalities are understood in the strong operator topology.
    Moreover, if Property~\ref{property c2} holds, the first sum contains at most $P_2(\ell(g))$ terms.
\end{proposition}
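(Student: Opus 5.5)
The plan is to verify the identity on the canonical basis $(\delta_h)_{h\in G}$ of $\ell^2(G)$, and then upgrade it to strong operator convergence by an orthogonality argument.

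Fix $g\in G$ and $h\in G$. The point $c_0:=\mathfrak{c}(g,gh)$ belongs to $\mathcal{C}_g$ by definition of $\mathcal{C}_g=\{\mathfrak{c}(g,k):k\in G\}$. By the definition of $V_{c,g}$, we have $V_{c,g}\delta_h=\delta_{gh}$ when $c=c_0$ and $V_{c,g}\delta_h=0$ otherwise. Hence
\begin{align*}
    \sum_{c\in\mathcal{C}_g}V_{c,g}\delta_h=\delta_{gh}=\lambda(g)\delta_h ,
\end{align*}
and only the single term $c=c_0$ is nonzero. Thus on each basis vector the sum is actually finite.

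To pass to arbitrary vectors when $\mathcal{C}_g$ is infinite, I would observe two things. First, each $V_{c,g}$ is a partial isometry: it is the composition of $\lambda(g)$ with the coordinate projection $P_{c,g}$ onto $\overline{\operatorname{span}}\{\delta_h:\mathfrak{c}(g,gh)=c\}$. Second, as $c$ ranges over $\mathcal{C}_g$, these projections are mutually orthogonal and sum strongly to $\Id$, since the sets $\{h:\mathfrak{c}(g,gh)=c\}$ partition $G$. Therefore
\begin{align*}
    \sum_{c\in\mathcal{C}_g}V_{c,g}=\lambda(g)\sum_{c\in\mathcal{C}_g}P_{c,g}=\lambda(g)
\end{align*}
strongly, with the net of finite partial sums converging on every $\xi\in\ell^2(G)$ by orthogonality (Pythagoras) and boundedness of $\lambda(g)$.

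For the second equality, I would note that $\theta_g:\mathcal{C}_g\to\Theta_{\ell(g)}$ is a function, so for each $c$ exactly one $t$ satisfies $\theta_g(c)=t$. Consequently $\sum_{t\in\Theta_{\ell(g)}}\mathbf{1}_{\theta_g(c)=t}=1$. Since $\Theta_{\ell(g)}$ is finite, regrouping the strongly convergent sum by the finitely many fibres $\theta_g^{-1}(t)$ is legitimate: each fibre sum is $\lambda(g)$ composed with a sub-sum of orthogonal projections, which again converges strongly. Finally, Property~\ref{property c2} gives $|\mathcal{C}_g|\le P_2(\ell(g))$, so the first sum has at most that many terms.

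The only mildly delicate point is the strong convergence in the infinite case, and the orthogonal-projection factorization handles it.
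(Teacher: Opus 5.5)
Your proof is correct and follows essentially the same route as the paper. The paper checks on each $\delta_h$ that exactly one $V_{c,g}$ acts nontrivially, regroups by the fibres of $\theta_g$, and gets strong convergence from the pairwise disjoint initial supports; your factorization $V_{c,g}=\lambda(g)P_{c,g}$ through mutually orthogonal coordinate projections is a more explicit version of that last remark.
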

\begin{proof}
    Fix $h\in G$. Then $\lambda(g)\delta_h=\delta_{gh}$, whereas
    \begin{align*}
        \left(\sum_{c\in \mathcal{C}_g}V_{c,g}\right)\delta_h=\sum_{c\in \mathcal{C}_g}\mathbf{1}_{c=\mathfrak{c}(g,gh)}\delta_{gh}.
    \end{align*}
    Since $\mathfrak{c}$ is single-valued and $\mathfrak{c}(g,gh)\in\mathcal{C}_g$, exactly one summand is nonzero. Hence
    \begin{align*}
        \left(\sum_{c\in \mathcal{C}_g}V_{c,g}\right)\delta_h=\delta_{gh}.
    \end{align*}
    The second equality follows by partitioning $\mathcal{C}_g$ according to the values of $\theta_g$:
    \begin{align*}
        \sum_{c\in \mathcal{C}_g}V_{c,g}=\sum_{t\in \Theta_{\ell(g)}}\sum_{\substack{c\in \mathcal{C}_g\\ \theta_g(c)=t}} V_{c,g}.
    \end{align*}
    Moreover, the summands have pairwise disjoint initial supports on the canonical basis, so the partial sums converge strongly to $\lambda(g)$.
\end{proof}
For $f:G\to B(\Hil)$ supported in $S_r$, let $(\lambda(f))_{g,h}\in B(\Hil)$ denote the matrix entries determined by
\begin{align*}
    \lambda(f) (a \otimes \delta_h)=\sum_{g\in G} (\lambda(f))_{g,h}a\otimes \delta_g;\quad a\in \Hil.
\end{align*}
A direct computation gives
\begin{align*}
     (\lambda(f))_{g,h}=f(gh^{-1}),
\end{align*}
for all $g,h\in G$. 
By Proposition~\ref{proposition: decomposition}, we have
\begin{align*}
    \lambda(f)=\sum_{t\in \Theta_r}\sum_{g\in G}\sum_{\substack{c\in \mathcal{C}_g\\ \theta_g(c)=t}} f(g)\otimes V_{c,g}.
\end{align*}
For convenience, set
\begin{align*}
    T_t=\sum_{p\in G}\sum_{\substack{c\in \mathcal{C}_p\\ \theta_p(c)=t}} f(p)\otimes V_{c,p},
\end{align*}
for $t\in\Theta_r$. Its matrix entries are obtained from
\begin{align*}
    T_t(a \otimes \delta_h)=\sum_{p\in G}\sum_{c\in \mathcal{C}_p}\mathbf{1}_{\theta_p(c)=t}\mathbf{1}_{c=\mathfrak{c}(p,ph)} f(p)a\otimes \delta_{ph},
\end{align*}
for $a\in \Hil$ and $h\in G$.
The two indicator functions combine to give 
\begin{align*}
    \sum_{c\in \mathcal{C}_p}\mathbf{1}_{\theta_p(c)=t}\mathbf{1}_{c=\mathfrak{c}(p,ph)}=\mathbf{1}_{\theta_p(\mathfrak{c}(p,ph))=t}.
\end{align*}
After substituting $p=gh^{-1}$, we obtain
\begin{align*}
    (T_t)_{g,h}=\mathbf{1}_{\theta_{gh^{-1}}(\mathfrak{c}(gh^{-1},g))=t} f(gh^{-1}).
\end{align*}
We now define the operators $M_{c,t}$ from Proposition~\ref{proposition: main decomposition-Haagerup}. For each $c\in X$, the operator $M_{c,t}$ selects the layer of $T_t$ corresponding to the normalized centroid $g^{-1}\cdot\mathfrak{c}(gh^{-1},g)$. Explicitly, 
\begin{align}\label{equation: operators M_c,x}
    (M_{c,t}(f))_{g,h}:=\mathbf{1}_{c=g^{-1}\cdot\mathfrak{c}(gh^{-1},g)} (T_t)_{g,h}=\mathbf{1}_{c=g^{-1}\cdot\mathfrak{c}(gh^{-1},g)}\mathbf{1}_{\theta_{gh^{-1}}(\mathfrak{c}(gh^{-1},g))=t} f(gh^{-1}),
\end{align}
for every $(c,t)\in X\times \Theta_r$ and $g,h\in G$. The operator $M_t(f)$ from \eqref{equation: M_x} acts by
\begin{align}\label{equation: action of M_x}
    M_t(f) (a\otimes \delta_{(c,h)})=\sum_{g\in G} (M_{c,t}(f))_{g,h}a \otimes \delta_{(c,g)},
\end{align}
for $a\in\Hil$ and $(c,h)\in X\times G$. Finally, define $U_t,V_t:\ell^2(G)\to\ell^2(X\times G)$ by
\begin{equation}
\label{equation: UV}
\begin{aligned}
    U_t\delta_g
    &= \sum_{c\in A_t(g)}\delta_{(c,g)};
    &\qquad
    A_t(g)
    &= \{g^{-1}\cdot \mathfrak{c}(p,g): \ell(p)=r; \theta_p(\mathfrak{c}(p,g))=t\};\\
    V_t\delta_g
    &= \sum_{c\in B_t(g)}\delta_{(c,g)};
    &\qquad
    B_t(g)
    &= \{g^{-1}p^{-1}\cdot \mathfrak{c}(p,pg): \ell(p)=r; \theta_p(\mathfrak{c}(p,pg))=t\}.
\end{aligned}
\end{equation}
Consequently,
\begin{align}\label{equation: adjoint of U}
    U^*_t\delta_{(c,g)}=\mathbf{1}_{c\in A_t(g)}\delta_g,
\end{align}
for every $(c,g)\in X\times G$. We prove Proposition~\ref{proposition: main decomposition-Haagerup} and Theorem~\ref{theorem: main Haagerup inequality} using the following lemmas.
\begin{lemma}\label{lemma: bound on norm of U_t and V_t}
    Let $t\in\Theta_r$ and let $U_t,V_t$ be as in \eqref{equation: UV}. Then Items~\ref{item: norm of U} and \ref{item: norm of V} hold. Moreover,
    \begin{align*}
        &\norm{U_t}^2=\sup_{h\in G}|A_t(h)|\le\sup_{h\in G}\#\{\mathfrak{c}(g,h): \ell(g)=r\} \\
        &\norm{V_t}^2=\sup_{h\in G}|B_t(h)|\le\sup_{h\in G}\#\{g^{-1}\cdot\mathfrak{c}(g,gh): \ell(g)=r\}.
    \end{align*}
    In particular, if Properties~\ref{property c1} and \ref{property c3} hold, we have
    \begin{align*}
        &\norm{U_t}^2\le P_1(r)\\
        &\norm{V_t}^2\le P_3(r).
    \end{align*}
\end{lemma}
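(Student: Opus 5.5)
The plan is to compute $U_t^*U_t$ and $V_t^*V_t$ directly on the canonical basis and show that both are diagonal. Since $U_t\delta_g$ is supported on $X\times\{g\}$, the vectors $U_t\delta_g$ and $U_t\delta_{g'}$ have disjoint supports whenever $g\ne g'$. Hence $\langle U_t\delta_g,U_t\delta_{g'}\rangle=0$ for $g\neq g'$. For $g=g'$ we get $\|U_t\delta_g\|^2=|A_t(g)|$, because the vectors $\delta_{(c,g)}$, $c\in A_t(g)$, are orthonormal. So $U_t^*U_t$ is the diagonal operator with entries $|A_t(g)|$, and
\[
\|U_t\|^2=\|U_t^*U_t\|=\sup_{g\in G}|A_t(g)|.
\]
The same argument gives $\|V_t\|^2=\sup_g|B_t(g)|$. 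For finite sums $\sum_g\alpha_g\delta_g$ the orthogonality yields $\|U_t\xi\|^2=\sum_g|\alpha_g|^2|A_t(g)|$, so $U_t$ extends to a bounded operator exactly when this supremum is finite.

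Next I identify these cardinalities with the quantities in Items~\ref{item: norm of U} and \ref{item: norm of V}. Since $G$ acts isometrically, hence bijectively, on $X$, the map $c\mapsto h^{-1}\cdot c$ is injective. Therefore
\[
|A_t(h)|=\#\{\mathfrak{c}(g,h): g\in S_r,\ \theta_g(\mathfrak{c}(g,h))=t\}.
\]
Similarly, $B_t(h)$ is the image under $h^{-1}$ of the set $\{g^{-1}\cdot\mathfrak{c}(g,gh): g\in S_r,\ \theta_g(\mathfrak{c}(g,gh))=t\}$, so the two sets have the same cardinality. Dropping the constraint $\theta_g(\cdot)=t$ only enlarges each set, which gives the stated inequalities.

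Finiteness is where the hypotheses enter. Under Properties~\ref{property c1} and \ref{property c3}, the enlarged sets have cardinality at most $P_1(r)$ and $P_3(r)$ respectively. This gives $\|U_t\|^2\le P_1(r)$ and $\|V_t\|^2\le P_3(r)$. Without these properties, finiteness still holds for a different reason. The sphere $S_r$ is finite because $\ell$ is proper, so each $A_t(h)$ and $B_t(h)$ has at most $|S_r|$ elements, and the supremum is bounded by $|S_r|<\infty$.

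The only mildly delicate point is the identification of the operator norm with the supremum, rather than just an upper bound. Equality follows because $U_t^*U_t$ is diagonal with exactly these entries, so its norm is the supremum of the entries. Everything else is bookkeeping with the translation by $h^{-1}$.
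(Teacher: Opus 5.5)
Your proposal is correct and follows the paper's proof essentially step for step. Both arguments use the same steps: the identity $\norm{U_t}^2=\sup_h|A_t(h)|$, finiteness via the bound $|S_r|$, injectivity of translation by $h^{-1}$ to identify the cardinalities, and dropping the $\theta$-constraint to invoke Properties~\ref{property c1} and \ref{property c3}. Your only addition is an explicit justification of the norm identity, by showing that $U_t^*U_t$ is diagonal with entries $|A_t(g)|$, whereas the paper simply reads this off from \eqref{equation: UV}.
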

\begin{proof}
By \eqref{equation: UV}, we have
\begin{align*}
    &\norm{U_t}^2=\sup_{h\in G}|A_t(h)|\\
    &\norm{V_t}^2=\sup_{h\in G}|B_t(h)|.
\end{align*}
Each element of $S_r$ contributes at most one point to either set, so both suprema are bounded by $|S_r|$ and are therefore finite. Fix $h\in G$. Since the action of $h^{-1}$ on $X$ is injective,
\begin{align*}
    |A_t(h)|=\#\{\mathfrak{c}(p,h):\ell(p)=r,\ \theta_p(\mathfrak{c}(p,h))=t\},
\end{align*}
which proves Item~\ref{item: norm of U}. Likewise, $B_t(h)$ is the image under $h^{-1}$ of the set of points $p^{-1}\cdot\mathfrak{c}(p,ph)$ satisfying the stated conditions. Hence
\begin{align*}
    |B_t(h)|=\#\{p^{-1}\cdot\mathfrak{c}(p,ph):\ell(p)=r,\ \theta_p(\mathfrak{c}(p,ph))=t\},
\end{align*}
which proves Item~\ref{item: norm of V}. Dropping the conditions involving $\theta_p$ gives
\begin{align*}
    &\sup_{h\in G}|A_t(h)|\le \sup_{h\in G}\#\{\mathfrak{c}(p,h): \ell(p)=r\}\\
    &\sup_{h\in G}|B_t(h)|\le \sup_{h\in G}\#\{p^{-1}\cdot \mathfrak{c}(p,ph):\ell(p)=r\}.
\end{align*}
The polynomial bounds for $\norm{U_t}^2$ and $\norm{V_t}^2$ now follow from Properties~\ref{property c1} and \ref{property c3}.
\end{proof}
The next lemma proves Item~\ref{item: decomposition}.
\begin{lemma}\label{lemma: decomposition of lambda f}
    Let $U_t,V_t,M_t$ be as in \eqref{equation: action of M_x} and \eqref{equation: UV}. Then,
    \begin{align}\label{equation: Item decomposition in the lemma}
        \lambda(f)=\sum_{t\in \Theta_r} (\Id_{\Hil}\otimes U_t)^* M_{t}(f)(\Id_{\Hil}\otimes V_t).
    \end{align}
\end{lemma}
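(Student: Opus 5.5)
Since $\lambda(f)=\sum_{t\in\Theta_r}T_t$ by Proposition~\ref{proposition: decomposition}, it suffices to prove, for each fixed $t\in\Theta_r$,
\begin{align*}
    T_t=(\Id_{\Hil}\otimes U_t)^* M_t(f)(\Id_{\Hil}\otimes V_t).
\end{align*}
Both sides are bounded: $T_t$ is a finite sum of bounded operators since $\operatorname{supp}(f)\subseteq S_r$ is finite, and the right-hand side is bounded by Lemma~\ref{lemma: bound on norm of U_t and V_t} together with the finiteness of each block sum. It therefore suffices to compare matrix entries, that is, to evaluate both sides on $a\otimes\delta_h$ and read off the coefficient of $\delta_g$.

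\textbf{Step 1: the composite on basis vectors.} By \eqref{equation: UV},
\begin{align*}
    (\Id\otimes V_t)(a\otimes\delta_h)=\sum_{c\in B_t(h)} a\otimes\delta_{(c,h)}.
\end{align*}
Applying $M_t(f)$ via \eqref{equation: action of M_x} gives
\begin{align*}
    \sum_{c\in B_t(h)}\sum_{g} (M_{c,t}(f))_{g,h}a\otimes\delta_{(c,g)}.
\end{align*}
Then \eqref{equation: adjoint of U} yields the entry
\begin{align*}
    \sum_{c\in X}\mathbf{1}_{c\in B_t(h)}\mathbf{1}_{c\in A_t(g)}(M_{c,t}(f))_{g,h}.
\end{align*}

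\textbf{Step 2: collapse the sum over $c$.} By \eqref{equation: operators M_c,x}, only $c_0:=g^{-1}\cdot\mathfrak{c}(gh^{-1},g)$ can contribute. If $f(gh^{-1})=0$ or $\theta_{gh^{-1}}(\mathfrak{c}(gh^{-1},g))\neq t$, both sides vanish. Otherwise set $p=gh^{-1}\in S_r$. Then:
\begin{itemize}
    \item $c_0=g^{-1}\cdot\mathfrak{c}(p,g)$ with $\ell(p)=r$ and $\theta_p(\mathfrak{c}(p,g))=t$, so $c_0\in A_t(g)$.
    \item Since $ph=g$, we have $c_0=h^{-1}p^{-1}\cdot\mathfrak{c}(p,ph)$ with $\theta_p(\mathfrak{c}(p,ph))=t$, so $c_0\in B_t(h)$.
\end{itemize}
Hence both indicators equal $1$, and the entry equals $\mathbf{1}_{\theta_{gh^{-1}}(\mathfrak{c}(gh^{-1},g))=t}f(gh^{-1})=(T_t)_{g,h}$, as required.

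\textbf{Main obstacle.} The delicate point is the index bookkeeping in Step~2. One must check that the normalization $g^{-1}$ used in $A_t$ and the normalization $h^{-1}p^{-1}$ used in $B_t$ produce the \emph{same} point $c_0$. This is what makes the middle operator block diagonal in $c$: row and column indices must land in the same block $M_{c,t}$. The identity holds exactly because $ph=g$.

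\textbf{Remaining points.} The strong convergence and the exchange of sums are harmless because every sum involved is finite on each basis vector. The "in particular" norm bound then follows from the triangle inequality, the fact that $\norm{M_t(f)}=\sup_c\norm{M_{c,t}(f)}$ for the direct sum, and $\norm{\Id\otimes U}=\norm{U}$.
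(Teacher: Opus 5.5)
Your proof is correct and follows essentially the same route as the paper: both evaluate on $a\otimes\delta_h$, collapse the $c$-sum to the single point $c_0=g^{-1}\cdot\mathfrak{c}(gh^{-1},g)$, and check that $c_0\in A_t(g)\cap B_t(h)$ using $p=gh^{-1}$ and $ph=g$. The only difference is bookkeeping: you prove the identity one layer at a time against $T_t$, using $\lambda(f)=\sum_t T_t$, whereas the paper sums over $t$ first and isolates the unique $t^*=\theta_{gh^{-1}}(\mathfrak{c}(gh^{-1},g))$.
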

\begin{proof}
Fix $a\in\Hil$ and $h\in G$. Then
\begin{align*}
    \lambda(f)a\otimes \delta_h=\sum_{g\in G}f(gh^{-1})a\otimes \delta_g.
\end{align*}
We compare this expression with the right-hand side of \eqref{equation: Item decomposition in the lemma}. For fixed $t\in\Theta_r$, \eqref{equation: UV} gives
\begin{align*}
    (\Id_{\Hil}\otimes V_t)a\otimes \delta_h= \sum_{c\in B_t(h)} a\otimes \delta_{(c,h)}.
\end{align*}
Applying \eqref{equation: action of M_x} yields
\begin{align*}
    M_{t}(f)\sum_{c\in B_t(h)} a\otimes \delta_{(c,h)}=\sum_{c\in B_t(h)}\sum_{g\in G}(M_{c,t}(f))_{g,h} a\otimes \delta_{(c,g)}.
\end{align*}
Finally, \eqref{equation: adjoint of U} gives
\begin{align*}
    \sum_{t\in \Theta_r}\left((\Id_{\Hil}\otimes U_t)^* M_{t}(f)(\Id_{\Hil}\otimes V_t)\right)a\otimes \delta_h=\sum_{t\in \Theta_r}\sum_{g\in G}\sum_{c\in B_t(h)\cap A_t(g)}(M_{c,t}(f))_{g,h} a\otimes \delta_g.
\end{align*}
It therefore suffices to prove that
\begin{align*}
    f(gh^{-1})=\sum_{t\in \Theta_r}\sum_{c\in B_t(h)\cap A_t(g)}(M_{c,t}(f))_{g,h},
\end{align*}
for all $g,h\in G$. If $f(gh^{-1})\ne0$, then $gh^{-1}\in S_r$. Thus it remains to verify that, whenever $gh^{-1}\in S_r$,
\begin{align*}
    1=\sum_{t\in \Theta_r}\sum_{c\in B_t(h)\cap A_t(g)}\mathbf{1}_{c=g^{-1}\cdot\mathfrak{c}(gh^{-1},g)}\mathbf{1}_{\theta_{gh^{-1}}(\mathfrak{c}(gh^{-1},g))=t}.
\end{align*}
Fix $g,h\in G$ such that $gh^{-1}\in S_r$. By definition, $t^*:=\theta_{gh^{-1}}(\mathfrak{c}(gh^{-1},g))\in \Theta_r$.
It is therefore enough to show that
\begin{align*}
    1=\sum_{c\in B_{t^*}(h)\cap A_{t^*}(g)}\mathbf{1}_{c=g^{-1}\cdot\mathfrak{c}(gh^{-1},g)}.
\end{align*}
Equivalently, we must show that $g^{-1}\cdot\mathfrak{c}(gh^{-1},g)\in B_{t^*}(h)\cap A_{t^*}(g)$. Indeed, let $p=gh^{-1}$. First,
\begin{align*}
    &g^{-1}\cdot\mathfrak{c}(gh^{-1},g)=g^{-1}\cdot \mathfrak{c}(p,g);\quad \theta_p(\mathfrak{c}(p,g))=t^*;\quad \ell(p)=r.
\end{align*}
Thus, $g^{-1}\cdot\mathfrak{c}(gh^{-1},g)\in A_{t^*}(g)$. Likewise,
\begin{align*}
    g^{-1}\cdot\mathfrak{c}(gh^{-1},g)=h^{-1}p^{-1}\cdot \mathfrak{c}(p,ph);\quad \theta_p(\mathfrak{c}(p,ph))=t^*;\quad \ell(p)=r.
\end{align*}
Hence $g^{-1}\cdot\mathfrak{c}(gh^{-1},g)\in B_{t^*}(h)$ as well.
\end{proof}
We can now prove Proposition~\ref{proposition: main decomposition-Haagerup} and Theorem~\ref{theorem: main Haagerup inequality}.
\begin{proof}[Proof of Proposition~\ref{proposition: main decomposition-Haagerup}]
    Lemma~\ref{lemma: bound on norm of U_t and V_t} proves Items~\ref{item: norm of U} and \ref{item: norm of V}, while Lemma~\ref{lemma: decomposition of lambda f} proves Item~\ref{item: decomposition}. Taking norms and applying the triangle inequality gives
    \begin{align*}
        \norm{\lambda(f)}\le \sum_{t\in \Theta_r} \norm{\Id_\Hil\otimes U_t}\norm{\Id_\Hil\otimes V_t}\norm{M_t(f)}.
    \end{align*}
    Since $M_t(f)$ is the orthogonal direct sum of the blocks $M_{c,t}(f)$, we have $\norm{M_t(f)}=\sup_{c\in X}\norm{M_{c,t}(f)}$.
\end{proof}
\begin{proof}[Proof of Theorem~\ref{theorem: main Haagerup inequality}]
    Proposition~\ref{proposition: main decomposition-Haagerup} gives
    \begin{align*}
         \norm{\lambda(f)}\le \sum_{t\in \Theta_r} \norm{U_t}\norm{V_t}\sup_{c\in X}\norm{M_{c,t}(f)}.
    \end{align*}
    Under Properties~\ref{property c1} and \ref{property c3}, Lemma~\ref{lemma: bound on norm of U_t and V_t} shows that
    \begin{align*}
        \norm{\lambda(f)}\le \sqrt{P_1(r)P_3(r)}\sum_{t\in \Theta_r}\sup_{c\in X}\norm{M_{c,t}(f)}.
    \end{align*}
    The second estimate follows from the $\ell^1$--$\ell^\infty$ bound and the assumption $|\Theta_r|\le P_2(r)$.
\end{proof}

\section{Lower bounds}\label{section: lower bounds}
Throughout this section, $f: G\to B(\Hil)$ is assumed to be supported in $S_r$. We derive several lower bounds for
\begin{align*}
    \lambda(f)=\sum_{g\in G}f(g)\otimes \lambda(g).
\end{align*}

We impose an additional factorization condition linking the centroid map $\mathfrak{c}$ to the parametrization maps $(\theta_g)_{g\in G}$. The following elementary observation is the starting point. 
\begin{lemma}\label{lemma: Schur decomposition}
    Let $(c,t)\in X\times \Theta_r$.
    Let $M_{c,t}(f)$ be defined by \eqref{equation: operators M_c,x}, and let $S_{c,t}=((S_{c,t})_{g,h})_{g,h\in G}$ be the $\{0,1\}$-valued matrix
    \begin{align*}
        (S_{c,t})_{g,h}:=\mathbf{1}_{c=g^{-1}\cdot\mathfrak{c}(gh^{-1},g)}\mathbf{1}_{\theta_{gh^{-1}}(\mathfrak{c}(gh^{-1},g))=t}\mathbf{1}_{gh^{-1}\in S_r}.
    \end{align*}
    Then,
    \begin{align*}
        (M_{c,t}(f))_{g,h}=(S_{c,t})_{g,h} \lambda(f)_{g,h},
    \end{align*}
    for all $g,h\in G$. Thus $M_{c,t}$ is obtained from $\lambda(f)$ by Schur multiplication with $S_{c,t}$; we write $M_{c,t}=S_{c,t}\circ\lambda(f)$.
\end{lemma}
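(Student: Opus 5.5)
The plan is to compare the two matrices entrywise, using only the explicit formulas already available: the definition \eqref{equation: operators M_c,x} of $M_{c,t}(f)$ and the identity $(\lambda(f))_{g,h}=f(gh^{-1})$ established just before the definition of $T_t$. We fix $(c,t)\in X\times\Theta_r$ and $g,h\in G$, and split into two cases according to whether $gh^{-1}\in S_r$.

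\emph{Case $gh^{-1}\in S_r$.} Here $\mathbf{1}_{gh^{-1}\in S_r}=1$, so $(S_{c,t})_{g,h}$ is the product of the two remaining indicators. Multiplying this by $\lambda(f)_{g,h}=f(gh^{-1})$ reproduces exactly the right-hand side of \eqref{equation: operators M_c,x}.

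\emph{Case $gh^{-1}\notin S_r$.} Since $f$ is supported in $S_r$, we have $f(gh^{-1})=0$. By \eqref{equation: operators M_c,x} this gives $(M_{c,t}(f))_{g,h}=0$, and the right-hand side vanishes as well, both because the factor $\mathbf{1}_{gh^{-1}\in S_r}$ is zero and because $\lambda(f)_{g,h}=0$.

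One subtlety should be checked. When $gh^{-1}\notin S_r$, the map $\theta_{gh^{-1}}$ takes values in $\Theta_{\ell(gh^{-1})}$ rather than in $\Theta_r$. The expression $\theta_{gh^{-1}}(\cdot)=t$ is still a well-defined indicator, simply comparing two elements, and in any case it is multiplied by zero. The extra factor $\mathbf{1}_{gh^{-1}\in S_r}$ in $S_{c,t}$ is what makes $S_{c,t}$ independent of $f$, so that it is a genuine Schur multiplier.

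With the entrywise identity in hand, the statement $M_{c,t}=S_{c,t}\circ\lambda(f)$ is just the definition of Schur (entrywise) multiplication of $G\times G$ operator matrices. No step is a real obstacle; the only care needed is the bookkeeping of the support indicator described above.
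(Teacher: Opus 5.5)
Your proof is correct and matches the paper, which states this lemma without a separate proof as an immediate entrywise consequence of \eqref{equation: operators M_c,x} and $(\lambda(f))_{g,h}=f(gh^{-1})$. One side remark is inaccurate but does not affect the argument: the two remaining indicators already do not depend on $f$, so the band factor $\mathbf{1}_{gh^{-1}\in S_r}$ is not what makes $S_{c,t}$ independent of $f$. That factor only restricts $S_{c,t}$ to the band where $\lambda(f)$ can be nonzero, and the identity holds with or without it.
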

We may therefore use the theory of Schur multipliers and Grothendieck factorization; see \cite[Chapter 5]{Pisier2001}. Let $\mathcal{K}$ be a Hilbert space with orthonormal basis $(e_p)_{p\in\N}$, and let $S=(S_{p,q})_{p,q\in\N}$ be a scalar matrix. The Schur multiplier $T\mapsto S\circ T$ is bounded on $B(\mathcal{K})$ whenever there exist an auxiliary Hilbert space $\mathcal{S}$ and families $(x_p)_{p\in\N},(y_q)_{q\in\N}\subseteq\mathcal{S}$ such that
\begin{enumerate}
    \item For all $p,q\in \N$, we have
    \begin{align*}
        S_{p,q}=\langle x_p,y_q\rangle.
    \end{align*}
    \item The families $(x_p)_{p\in\N}$ and $(y_q)_{q\in\N}$ are uniformly bounded:
    \begin{align*}
        \sup_{p\in \N}\norm{x_p},\sup_{q\in \N}\norm{y_q}<\infty.
    \end{align*}
\end{enumerate}
In this case, define
    \begin{align*}
        \norm{S}_{\operatorname{cb}}:=\inf\left\{\left(\sup_{p\in \N}\norm{x_p}\right)\left(\sup_{q\in \N}\norm{y_q}\right): S_{p,q}=\langle x_p,y_q\rangle\right\},
    \end{align*}
where the infimum is taken over all such representations $(\mathcal{S},(x_p)_p,(y_q)_q)$. By \cite[Theorem 5.1]{Pisier2001},
\begin{align*}
    \norm{S\circ T}\le \norm{S}_{\operatorname{cb}}\norm{T}.
\end{align*}
Applying this criterion to the matrices $S_{c,t}$ gives the following consequence.
\begin{corollary}
    Let $(c,t)\in X\times \Theta_r$.
    Let $M_{c,t}(f)$ be defined by \eqref{equation: operators M_c,x}, and let $S_{c,t}=((S_{c,t})_{g,h})_{g,h\in G}$ be the $\{0,1\}$-valued matrix
    \begin{align*}
        (S_{c,t})_{g,h}:=\mathbf{1}_{c=g^{-1}\cdot\mathfrak{c}(gh^{-1},g)}\mathbf{1}_{\theta_{gh^{-1}}(\mathfrak{c}(gh^{-1},g))=t}\mathbf{1}_{gh^{-1}\in S_r}.
    \end{align*}
    If $S_{c,t}$ is a nonzero bounded Schur multiplier, then
    \begin{align*}
        \norm{\lambda(f)} \ge  \frac{1}{\norm{S_{c,t}}_{\operatorname{cb}}}\norm{M_{c,t}(f)}.
    \end{align*}
\end{corollary}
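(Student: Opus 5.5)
The plan is to combine Lemma~\ref{lemma: Schur decomposition} with the Schur multiplier estimate $\norm{S\circ T}\le\norm{S}_{\operatorname{cb}}\norm{T}$ from \cite[Theorem 5.1]{Pisier2001}. The Pisier estimate is stated for scalar-entry matrices on $\mathcal{K}$. Here, however, $\lambda(f)$ is a $G\times G$ matrix with entries $\lambda(f)_{g,h}=f(gh^{-1})\in B(\Hil)$, acting on $\Hil\otimes\ell^2(G)$.

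\emph{Step 1: the operator-coefficient version.} I first record that the estimate holds with operator coefficients. Suppose $S_{g,h}=\langle x_g,y_h\rangle$ in an auxiliary Hilbert space $\mathcal{S}$, with $\sup\norm{x_g}$ and $\sup\norm{y_h}$ finite. Define the column maps
\begin{align*}
    X:\Hil\otimes\ell^2(G)\to \Hil\otimes\ell^2(G)\otimes\mathcal{S},\qquad X(a\otimes\delta_g)=a\otimes\delta_g\otimes x_g,
\end{align*}
and $Y$ analogously with $y_h$. These are bounded, with $\norm{X}=\sup_g\norm{x_g}$ and $\norm{Y}=\sup_h\norm{y_h}$, because they map orthogonal basis blocks to orthogonal blocks. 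A direct computation on elementary tensors gives
\begin{align*}
    S\circ\lambda(f)=X^*\bigl(\lambda(f)\otimes\Id_{\mathcal{S}}\bigr)Y.
\end{align*}
Indeed, the $(g,h)$ entry of the right-hand side is $\langle x_g,y_h\rangle f(gh^{-1})$, once the inner-product convention is matched (possibly replacing $x_g$ by its conjugate). Since $\norm{\lambda(f)\otimes\Id_{\mathcal{S}}}=\norm{\lambda(f)}$, this yields $\norm{S\circ\lambda(f)}\le\norm{X}\norm{Y}\norm{\lambda(f)}$. Taking the infimum over representations gives the bound with $\norm{S}_{\operatorname{cb}}$. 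Alternatively, one can simply cite that the norm of a Schur multiplier on $B(\ell^2)$ equals its cb-norm, which is the content of \cite[Theorem 5.1]{Pisier2001}.

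\emph{Step 2: apply it to $S_{c,t}$.} By Lemma~\ref{lemma: Schur decomposition}, $M_{c,t}(f)=S_{c,t}\circ\lambda(f)$ entrywise. The factor $\mathbf{1}_{gh^{-1}\in S_r}$ is harmless, because $\lambda(f)_{g,h}=0$ whenever $gh^{-1}\notin S_r$. Hence $\norm{M_{c,t}(f)}\le\norm{S_{c,t}}_{\operatorname{cb}}\norm{\lambda(f)}$.

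\emph{Step 3: divide.} Because $S_{c,t}$ is nonzero and $\{0,1\}$-valued, some entry equals $1=\langle x_g,y_h\rangle$. By Cauchy--Schwarz, every representation then has $\sup\norm{x}\sup\norm{y}\ge1$, so $\norm{S_{c,t}}_{\operatorname{cb}}\ge1>0$. Dividing gives the claimed inequality $\norm{\lambda(f)}\ge\norm{M_{c,t}(f)}/\norm{S_{c,t}}_{\operatorname{cb}}$.

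The only real obstacle is Step 1, namely justifying the passage from scalar to $B(\Hil)$-valued entries. I expect the factorization $X^*(\lambda(f)\otimes\Id)Y$ to handle it cleanly. The remaining care is in the indexing by $G$ rather than $\N$, which is immaterial since $G$ is countable.
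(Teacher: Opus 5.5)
Your proposal is correct and follows the paper's route: the paper obtains the corollary directly from Lemma~\ref{lemma: Schur decomposition} together with the bound $\norm{S\circ T}\le\norm{S}_{\operatorname{cb}}\norm{T}$ of \cite[Theorem 5.1]{Pisier2001}. Your Step 1 (the factorization $S\circ\lambda(f)=X^*(\lambda(f)\otimes\Id_{\mathcal{S}})Y$, which handles the $B(\Hil)$-valued entries) and your Step 3 (the check $\norm{S_{c,t}}_{\operatorname{cb}}\ge 1$, which justifies the division) make explicit details that the paper leaves implicit.
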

The following more concrete criterion is often sufficient.
\begin{corollary}\label{corollary: lower bound simple version}
    Let $(c,t)\in X\times \Theta_r$.
    Let $M_{c,t}(f)$ be defined by \eqref{equation: operators M_c,x}, and let $S_{c,t}=((S_{c,t})_{g,h})_{g,h\in G}$ be the $\{0,1\}$-valued matrix
    \begin{align*}
        (S_{c,t})_{g,h}:=\mathbf{1}_{c=g^{-1}\cdot\mathfrak{c}(gh^{-1},g)}\mathbf{1}_{\theta_{gh^{-1}}(\mathfrak{c}(gh^{-1},g))=t}\mathbf{1}_{gh^{-1}\in S_r}.
    \end{align*}
    Suppose there is an index set $\mathcal{P}_{c,t}$ and, for each $s\in\mathcal{P}_{c,t}$, subsets $A_{s,c,t},B_{s,c,t}\subseteq G$ such that 
    \begin{align*}
        (S_{c,t})_{g,h}=\sum_{s\in \mathcal{P}_{c,t}} \mathbf{1}_{g\in A_{s,c,t}}\mathbf{1}_{h\in B_{s,c,t}},\qquad \text{whenever }gh^{-1}\in S_r.
    \end{align*}
    Define
    \begin{align*}
        &P_1=\sup_{g\in G}\#\{s\in \mathcal{P}_{c,t}: g\in A_{s,c,t}\}\\
        &P_2=\sup_{h\in G}\#\{s\in \mathcal{P}_{c,t}: h\in B_{s,c,t}\}.
    \end{align*}
    Then,
    \begin{align*}
        \norm{M_{c,t}(f)}\le \sqrt{P_1P_2}\norm{\lambda(f)}.
    \end{align*}
    If, in addition, $\mathcal{P}_{c,t}$ is finite, then
    \begin{align*}
        \norm{M_{c,t}(f)}\le |\mathcal{P}_{c,t}|\norm{\lambda(f)}.
    \end{align*}
\end{corollary}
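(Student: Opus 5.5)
The plan is to exhibit an explicit Grothendieck-type factorization of the Schur multiplier $S_{c,t}$ and then invoke Lemma~\ref{lemma: Schur decomposition} together with \cite[Theorem 5.1]{Pisier2001}.

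First, we reduce to matrices supported on $\{gh^{-1}\in S_r\}$. The identity $(S_{c,t})_{g,h}=\sum_s \mathbf{1}_{g\in A_{s,c,t}}\mathbf{1}_{h\in B_{s,c,t}}$ is only assumed when $gh^{-1}\in S_r$. Off that set, however, $\lambda(f)_{g,h}=f(gh^{-1})=0$. So $M_{c,t}(f)=S'\circ\lambda(f)$ for the matrix
\begin{align*}
S'_{g,h}=\sum_{s\in\mathcal{P}_{c,t}}\mathbf{1}_{g\in A_{s,c,t}}\mathbf{1}_{h\in B_{s,c,t}},
\end{align*}
since $S'$ agrees with $S_{c,t}$ wherever $\lambda(f)$ has nonzero entries. (Any entries of $S'$ off the support are irrelevant; only $S'_{g,h}<\infty$ is needed, and this follows from $P_1<\infty$.) This step justifies replacing $S_{c,t}$ by $S'$ in the estimate.

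Second, we factor $S'$. Take the auxiliary Hilbert space $\mathcal{S}=\ell^2(\mathcal{P}_{c,t})$ and set
\begin{align*}
x_g=\sum_{s:\,g\in A_{s,c,t}}e_s,\qquad y_h=\sum_{s:\,h\in B_{s,c,t}}e_s.
\end{align*}
Then $\langle x_g,y_h\rangle=\#\{s: g\in A_{s,c,t},\ h\in B_{s,c,t}\}=S'_{g,h}$, with $\norm{x_g}^2\le P_1$ and $\norm{y_h}^2\le P_2$. Hence $\norm{S'}_{\operatorname{cb}}\le\sqrt{P_1P_2}$.

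Third, we apply the Schur multiplier bound. Working in $B(\Hil\otimes\ell^2(G))$, with the Schur multiplier acting on the $G$-indices, the factorization gives
\begin{align*}
S'\circ T=\sum_{s\in\mathcal{P}_{c,t}} (\Id_{\Hil}\otimes D_{A_s})\,T\,(\Id_{\Hil}\otimes D_{B_s})
\end{align*}
in the form $W^*(\Id\otimes T)V$, where $D$ denotes a diagonal projection. Here $V\delta_h=\sum_s e_s\otimes\delta_h\,\mathbf{1}_{h\in B_s}$, so $\norm{V}^2\le P_2$, and $W$ is defined analogously with $\norm{W}^2\le P_1$. Because of the operator coefficients, we need the cb version of \cite[Theorem 5.1]{Pisier2001}; the explicit formula above makes this immediate. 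It gives $\norm{M_{c,t}(f)}\le\sqrt{P_1P_2}\norm{\lambda(f)}$.

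The finite case follows since $P_1,P_2\le|\mathcal{P}_{c,t}|$. The only delicate point is the first step, namely that the factorization need only hold on the support of $\lambda(f)$. Convergence of the sum over $s$ is handled by strong convergence with the uniform bounds on $\norm{V}$ and $\norm{W}$.
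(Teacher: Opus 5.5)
Your proposal is correct and follows essentially the same route as the paper. The paper also replaces $S_{c,t}$ by $R_{g,h}=\langle x_g,y_h\rangle$, with the same indicator vectors in $\ell^2(\mathcal{P}_{c,t})$, uses that $\lambda(f)$ vanishes off the band $gh^{-1}\in S_r$ to get $M_{c,t}(f)=R\circ\lambda(f)$, bounds $\norm{R}_{\operatorname{cb}}\le\sqrt{P_1P_2}$, and then uses $P_1,P_2\le|\mathcal{P}_{c,t}|$ for the finite case. Your explicit dilation $W^*(\Id\otimes T)V$ is a useful addition: it makes the operator-coefficient case self-contained, where the paper simply cites \cite[Theorem 5.1]{Pisier2001}.
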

\begin{proof}
    Define the Hilbert space $\mathcal{S}=\ell^2(\mathcal{P}_{c,t})$ and the vectors
    \begin{align*}
        &x_g=\left(\mathbf{1}_{g\in A_{s,c,t}}\right)_{s\in \mathcal{P}_{c,t}}\in \mathcal{S}\\
        &y_g=\left(\mathbf{1}_{g\in B_{s,c,t}}\right)_{s\in \mathcal{P}_{c,t}}\in \mathcal{S}.
    \end{align*}
    Define the scalar matrix $R=(R_{g,h})_{g,h\in G}$ by $R_{g,h}=\langle x_g,y_h\rangle$. By assumption, $R_{g,h}=(S_{c,t})_{g,h}$ whenever $gh^{-1}\in S_r$. Since $\lambda(f)_{g,h}=0$ outside this band, we have $M_{c,t}(f)=R\circ\lambda(f)$. Moreover,
    \begin{align*}
        &\sup_{g\in G}\norm{x_g}^2=\sup_{g\in G}\#\{s\in \mathcal{P}_{c,t}: g\in A_{s,c,t}\}=P_1,\\
        &\sup_{h\in G}\norm{y_h}^2=\sup_{h\in G}\#\{s\in \mathcal{P}_{c,t}: h\in B_{s,c,t}\}=P_2.
    \end{align*}
    Hence $\norm{R}_{\operatorname{cb}}\le\sqrt{P_1P_2}$, which proves the first estimate. If $\mathcal{P}_{c,t}$ is finite, then $P_1,P_2\le|\mathcal{P}_{c,t}|$, and the second estimate follows.
\end{proof}

\section{Median metric spaces}\label{section: median spaces}
Let $(X,d)$ be a metric space. Recall that $X$ is assumed to be a connected, uniformly locally finite graph equipped with its graph metric. For $x,y\in X$, define the metric interval
\begin{align*}
    [x,y]:=\{m\in X: d(x,y)=d(x,m)+d(m,y)\}.
\end{align*}
The metric space $(X,d)$ is \textit{modular} if $[x,y]\cap[y,z]\cap[z,x]\ne\varnothing$ for all $x,y,z\in X$. It is \textit{median} if this intersection consists of a single point, denoted by $m(x,y,z)$. Whenever quantitative control of the median is needed, we assume that $m:X^3\to X$ is $K$-Lipschitz in each variable for some $K\ge1$ \cite[Assumption (L2)]{SpakulaWright2017propertyA}.

Suppose that $(X,d)$ is modular, fix $x_0\in X$, and denote the sphere of radius $r$ about $x_0$ by $S_r(X)$. Let $G$ be a finitely generated group acting by isometries on $X$. A modular centroid map $\mathfrak{c}:G\times G\to X$ is obtained by choosing 
\begin{align}\label{equation: centroid in modular cases}
    \mathfrak{c}(g,h)\in [x_0,g\cdot x_0]\cap [x_0,h\cdot x_0]\cap [g\cdot x_0,h\cdot x_0].
\end{align}
We equip $G$ with the length function $\ell(g)=d(x_0,g\cdot x_0)$. We define a parametrization family $(\theta,\Theta)$ for $(G,\mathfrak{c})$ simply by setting $\Theta_r=\{0,\ldots, r\}$ and
\begin{equation*}
    \begin{aligned}
        \theta_g \colon \mathcal{C}_g &\longrightarrow \Theta_{\ell(g)} \\
                u &\longmapsto d(x_0,u).
    \end{aligned}
\end{equation*}
This map is well-defined: if $u=\mathfrak{c}(g,h)$ for some $h\in G$, then $u\in[x_0,g\cdot x_0]$, and hence
\begin{align*}
    \ell(g)=d(x_0,g\cdot x_0)=d(x_0,u)+d(u,g\cdot x_0)\ge d(x_0,u),
\end{align*}
so that $d(x_0,u)\le \ell(g)$.

The main result of this section is the following.
\begin{theorem}\label{theorem: median case}
    Let $(X,d)$ be a median metric space with base point $x_0$, and let $G$ be a finitely generated group acting by isometries on $X$. Using the above parametrization family and median map, we have
    \begin{align*}
        \sup_{(c,t)\in X\times \{0,\ldots, r\}}\norm{M_{c,t}(f)}\le \norm{\lambda(f)}\le \sup_{(c,t)\in X\times\{0,\ldots, r\}}\norm{M_{c,t}(f)}\sum_{t=0}^r\sup_{y\in X}|[x_0,y]\cap S_{t}(X)|,
    \end{align*}
    for every $f:G\to B(\Hil)$ supported in $S_r$. Moreover, only modularity of $(X,d)$ is needed for the upper bound.
\end{theorem}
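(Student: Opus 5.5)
The plan is to derive both inequalities from the general machinery of Sections~\ref{section: proofs} and~\ref{section: lower bounds}: the upper bound from Proposition~\ref{proposition: main decomposition-Haagerup}, and the lower bound from Corollary~\ref{corollary: lower bound simple version} with a one-element index set.

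\emph{Upper bound (modularity only).} Proposition~\ref{proposition: main decomposition-Haagerup} gives
\[
\norm{\lambda(f)}\le \sup_{c,t}\norm{M_{c,t}(f)}\sum_{t=0}^r\norm{U_t}\norm{V_t},
\]
so it suffices to estimate $\norm{U_t}$ and $\norm{V_t}$ by Items~\ref{item: norm of U} and~\ref{item: norm of V}. Write $a_t:=\sup_{y\in X}|[x_0,y]\cap S_t(X)|$.

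For $\norm{U_t}$: each point $\mathfrak{c}(g,h)$ with $\theta_g(\mathfrak{c}(g,h))=t$ lies in $[x_0,h\cdot x_0]$ and satisfies $d(x_0,\mathfrak{c}(g,h))=t$. Hence $\norm{U_t}^2\le a_t$.

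For $\norm{V_t}$: put $u=\mathfrak{c}(g,gh)$ with $g\in S_r$ and $d(x_0,u)=t$. By \eqref{equation: centroid in modular cases}, $u\in[g\cdot x_0,gh\cdot x_0]$, so $g^{-1}\cdot u\in[x_0,h\cdot x_0]$. Since also $u\in[x_0,g\cdot x_0]$, we get
\[
d(x_0,g^{-1}\cdot u)=d(g\cdot x_0,u)=r-t.
\]
Hence $\norm{V_t}^2\le a_{r-t}$.

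By AM--GM, $\norm{U_t}\norm{V_t}\le\tfrac12(a_t+a_{r-t})$. Summing over $t\in\{0,\dots,r\}$ and using the symmetry $t\leftrightarrow r-t$ gives exactly $\sum_{t=0}^r a_t$. Only the existence of the point in \eqref{equation: centroid in modular cases} was used, so modularity suffices.

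\emph{Lower bound (median).} I will show that each $S_{c,t}$ is a single rectangle on the band $gh^{-1}\in S_r$, so that Corollary~\ref{corollary: lower bound simple version} applies with $|\mathcal{P}_{c,t}|=1$ and yields $\norm{M_{c,t}(f)}\le\norm{\lambda(f)}$.

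Set $p=gh^{-1}$, so that $g^{-1}p=h^{-1}$. Equivariance of the median gives
\[
g^{-1}\cdot\mathfrak{c}(p,g)=m(x_0,\,g^{-1}\cdot x_0,\,h^{-1}\cdot x_0).
\]
Likewise, $\theta_p(\mathfrak{c}(p,g))=t$ becomes $d(g^{-1}\cdot x_0,\,g^{-1}\cdot\mathfrak{c}(p,g))=t$, and $\ell(p)=r$ becomes $d(g^{-1}\cdot x_0,h^{-1}\cdot x_0)=r$. Define
\[
A=\{g:\ c\in[x_0,g^{-1}\cdot x_0],\ d(g^{-1}\cdot x_0,c)=t\},\qquad
B=\{h:\ c\in[x_0,h^{-1}\cdot x_0],\ d(h^{-1}\cdot x_0,c)=r-t\}.
\]

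The claim is that, on the band, $(S_{c,t})_{g,h}=\mathbf{1}_A(g)\mathbf{1}_B(h)$.
\begin{enumerate}
\item[$(\Rightarrow)$] If $c$ is the median, then $c$ lies in all three intervals. Its distance to $g^{-1}\cdot x_0$ is $t$, and since $c\in[g^{-1}\cdot x_0,h^{-1}\cdot x_0]$ with $d(g^{-1}\cdot x_0,h^{-1}\cdot x_0)=r$, its distance to $h^{-1}\cdot x_0$ is $r-t$. Thus $g\in A$ and $h\in B$.
\item[$(\Leftarrow)$] If $g\in A$, $h\in B$ and $d(g^{-1}\cdot x_0,h^{-1}\cdot x_0)=r$, then $d(g^{-1}\cdot x_0,c)+d(c,h^{-1}\cdot x_0)=r$. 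So $c$ lies in $[g^{-1}\cdot x_0,h^{-1}\cdot x_0]$ as well as in the two intervals through $x_0$. Uniqueness of the median point in a median space then forces $c=m(x_0,g^{-1}\cdot x_0,h^{-1}\cdot x_0)$.
\end{enumerate}

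The main care point is this bookkeeping: translating every condition by $g^{-1}$ and using the band condition to recover the third interval. Uniqueness of the median is used only in $(\Leftarrow)$. That is why the lower bound needs the median hypothesis, while the upper bound needs only modularity.
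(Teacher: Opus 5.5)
Your proposal is correct and follows essentially the same route as the paper. The upper bound comes from Proposition~\ref{proposition: main decomposition-Haagerup} with $\norm{U_t}^2\le a_t$ and $\norm{V_t}^2\le a_{r-t}$; the paper gets these via Lemma~\ref{lemma: inclusions of A and B} for the translated sets $A_t(g),B_t(h)$ and then undoes the translation, and it uses Cauchy--Schwarz where you use AM--GM. The lower bound is exactly the single-rectangle factorization $\mathbf{1}_{g\in G_{c,t}}\mathbf{1}_{h\in G_{c,r-t}}$ of Lemma~\ref{lemma: lower bound median}, fed into Corollary~\ref{corollary: lower bound simple version}, and your sets $A,B$ coincide with $G_{c,t},G_{c,r-t}$.
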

We next reformulate Theorem~\ref{theorem: median case} in terms of Buchholz's spherical operators:
\begin{align*}
    (B_{i,j}(f))_{g,h}=\mathbf{1}_{g\cdot x_0\in S_i(X)}\mathbf{1}_{h\cdot x_0\in S_j(X)}f(gh^{-1}),\qquad gh^{-1}\in S_r,
\end{align*}
for $i,j\in \N$.
\begin{corollary}\label{corollary: Buchholz median case}
    Let $(X,d)$ be a median metric space with base point $x_0$, and let $G$ be a finitely generated group acting by isometries on $X$. Assume that $G$ acts transitively on $X$.
    Using the above parametrization family and median map, we have
    \begin{align*}
        \max_{t\in \{0,\ldots, r\}}\norm{B_{t,r-t}(f)}\le \norm{\lambda(f)}\le \max_{t\in \{0,\ldots, r\}}\norm{B_{t,r-t}(f)}\sum_{t=0}^r\sup_{y\in X}|[x_0,y]\cap S_{t}(X)|,
    \end{align*}
    for every $f:G\to B(\Hil)$ supported in $S_r$.
\end{corollary}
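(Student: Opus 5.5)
The plan is to deduce the corollary directly from Theorem~\ref{theorem: median case}. It suffices to show that, for every $t\in\{0,\ldots,r\}$,
\begin{align*}
    \norm{B_{t,r-t}(f)}=\sup_{c\in X}\norm{M_{c,t}(f)}.
\end{align*}
Then the upper bound of Theorem~\ref{theorem: median case} holds with $\sup_{c,t}\norm{M_{c,t}(f)}$ replaced by $\max_t\norm{B_{t,r-t}(f)}$, and its lower bound $\sup_{c,t}\norm{M_{c,t}(f)}\le\norm{\lambda(f)}$ becomes $\max_t\norm{B_{t,r-t}(f)}\le\norm{\lambda(f)}$.

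\emph{Step 1: normalizing the centroid.} Fix $g,h$ with $p=gh^{-1}\in S_r$. Since $\mathfrak{c}(p,g)=m(x_0,p\cdot x_0,g\cdot x_0)$ and $G$ acts by isometries, the median is equivariant, so
\begin{align*}
    g^{-1}\cdot\mathfrak{c}(p,g)=m(g^{-1}\cdot x_0,\ h^{-1}\cdot x_0,\ x_0).
\end{align*}
Also $t=\theta_p(\mathfrak{c}(p,g))=d(x_0,\mathfrak{c}(p,g))=d(g^{-1}\cdot x_0,c)$, where $c$ denotes this normalized centroid. The point $c$ lies in $[g^{-1}\cdot x_0,h^{-1}\cdot x_0]$, an interval of length $d(x_0,p\cdot x_0)=r$, so $d(c,h^{-1}\cdot x_0)=r-t$. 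Consequently $(M_{c,t}(f))_{g,h}=f(gh^{-1})$ exactly when $c=m(x_0,g^{-1}\cdot x_0,h^{-1}\cdot x_0)$, $d(c,g^{-1}\cdot x_0)=t$ and $gh^{-1}\in S_r$; otherwise the entry is $0$.

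\emph{Step 2: $M_{x_0,t}(f)=B_{t,r-t}(f)$.} I would check that the two indicator conditions agree. Here I read the spherical condition on $g$ as $d(x_0,g^{-1}\cdot x_0)$, which equals $d(g\cdot x_0,x_0)$ by isometry; this is the one place where the index conventions must be tracked carefully.
\begin{itemize}
    \item If $d(x_0,g^{-1}\cdot x_0)=t$, $d(x_0,h^{-1}\cdot x_0)=r-t$ and $gh^{-1}\in S_r$, then $d(g^{-1}\cdot x_0,h^{-1}\cdot x_0)=r=t+(r-t)$. Hence $x_0\in[g^{-1}\cdot x_0,h^{-1}\cdot x_0]$, the median equals $x_0$, and the $t$-condition holds.
    \item Conversely, if $c=x_0$, Step 1 gives exactly these distances.
\end{itemize}
This already yields $\norm{B_{t,r-t}(f)}\le\sup_c\norm{M_{c,t}(f)}$.

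\emph{Step 3: arbitrary $c$, using transitivity.} By transitivity, choose $k\in G$ with $k\cdot x_0=c$, and apply the right-translation unitary $\delta_g\mapsto\delta_{gk}$. It preserves all entries $f(gh^{-1})$, since $(gk)(hk)^{-1}=gh^{-1}$. It replaces $g^{-1}\cdot x_0$ by $k^{-1}g^{-1}\cdot x_0$, which is the old point translated by the isometry $k^{-1}$ sending $c$ to $x_0$. Under this conjugation the condition ``median equals $c$, at distances $t$ and $r-t$'' becomes ``median equals $x_0$, at distances $t$ and $r-t$''. Therefore $M_{c,t}(f)$ is unitarily equivalent to $M_{x_0,t}(f)=B_{t,r-t}(f)$, up to restriction to the rows and columns actually attained, and a compression does not increase the norm. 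This gives $\sup_c\norm{M_{c,t}(f)}\le\norm{B_{t,r-t}(f)}$.

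Finally, I would insert this equality into Theorem~\ref{theorem: median case}. The main obstacle I expect is the bookkeeping in Step 3: the conjugation must be by right translations, which commute with left convolution by $f$, and the median must be equivariant, which follows from the uniqueness of medians under isometries.
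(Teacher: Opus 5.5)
Your strategy is the paper's: reduce to $\sup_{c}\norm{M_{c,t}(f)}=\norm{B_{t,r-t}(f)}$, identify $M_{x_0,t}(f)=B_{t,r-t}(f)$, and handle general $c$ by transitivity, a right-translation unitary and a compression. Steps~1 and~2 are correct, and they already give the lower bound. The gap is in Step~3, which is what the upper bound needs.

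Right translation moves $g^{-1}\cdot x_0$ and $h^{-1}\cdot x_0$ by $k^{-1}$, but it does not move the base point, which is a fixed argument of the median. With $g'=gk$ and $h'=hk$, the condition $c=m(x_0,g^{-1}\cdot x_0,h^{-1}\cdot x_0)$ becomes $x_0=m(k^{-1}\cdot x_0,g'^{-1}\cdot x_0,h'^{-1}\cdot x_0)$. It does not become $x_0=m(x_0,g'^{-1}\cdot x_0,h'^{-1}\cdot x_0)$, so your claim that the condition ``becomes median equals $x_0$'' is false. For instance, take $G=X=\mathbb{Z}$ and $0<t<r$. A nonzero entry of $M_{c,t}(f)$ forces $c$ to be the median of three points of a line while lying at positive distance from two of them, hence $c=x_0$. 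So $M_{c,t}(f)=0$ for $c\neq x_0$, whereas $M_{x_0,t}(f)=B_{t,r-t}(f)$ is nonzero in general. Your hedge ``up to restriction to the rows and columns actually attained'' is where the real content lies, and you give no argument for it. You need the discrepancy to be a product of a row indicator and a column indicator, because a general $\{0,1\}$ Schur multiplier is not contractive.

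The missing idea is the factorization behind the paper's Lemma~\ref{lemma: lower bound median}. For $gh^{-1}\in S_r$, the two conditions $c=m(x_0,g^{-1}\cdot x_0,h^{-1}\cdot x_0)$ and $d(c,g^{-1}\cdot x_0)=t$ are together equivalent to the four conditions $c\in[x_0,g^{-1}\cdot x_0]$, $c\in[x_0,h^{-1}\cdot x_0]$, $d(c,g^{-1}\cdot x_0)=t$ and $d(c,h^{-1}\cdot x_0)=r-t$. For the converse direction, the last two distances add up to $r=d(g^{-1}\cdot x_0,h^{-1}\cdot x_0)$. Hence $c$ lies in all three intervals, and uniqueness of the median applies.

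Thus $M_{c,t}(f)$ is a compression of the recentred matrix $\bigl(\mathbf{1}_{d(c,g^{-1}\cdot x_0)=t}\mathbf{1}_{d(c,h^{-1}\cdot x_0)=r-t}f(gh^{-1})\bigr)_{g,h}$. The row projection is onto $\{g: c\in[x_0,g^{-1}\cdot x_0]\}$, and the column projection is onto $\{h: c\in[x_0,h^{-1}\cdot x_0]\}$. Your unitary carries only this recentred matrix, not $M_{c,t}(f)$ itself, onto $B_{t,r-t}(f)$: if $c=k\cdot x_0$, its $(g,h)$ entry equals $(B_{t,r-t}(f))_{gk,hk}$. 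With this inserted, Step~3 yields $\norm{M_{c,t}(f)}\le\norm{B_{t,r-t}(f)}$, and the rest of your argument goes through.
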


We begin the proof of Theorem~\ref{theorem: median case} with two inclusions for the sets $A_t(g)$ and $B_t(h)$ from \eqref{equation: UV}.
\begin{lemma}\label{lemma: inclusions of A and B}
    Suppose $(X,d)$ is modular.
    Fix $t\in\{0,\ldots,r\}$. If $g\in G$ satisfies $\ell(g)\ge t$, then
    \begin{align*}
        A_t(g)\subseteq [x_0,g^{-1}\cdot x_0]\cap S_{\ell(g)-t}(X).
    \end{align*}
    If $h\in G$ satisfies $\ell(h)\ge r-t$, then 
    \begin{align*}
        B_t(h)\subseteq [x_0,h^{-1}\cdot x_0]\cap S_{\ell(h)-(r-t)}(X).
    \end{align*}
\end{lemma}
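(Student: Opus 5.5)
The plan is to unwind the definitions of $A_t(g)$ and $B_t(h)$ in \eqref{equation: UV} and use only the three interval memberships in \eqref{equation: centroid in modular cases}, together with the fact that $G$ acts by isometries. This is why only modularity, and not uniqueness of medians, is needed.

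For $A_t(g)$, I take an element $c=g^{-1}\cdot\mathfrak{c}(p,g)$ with $\ell(p)=r$ and $\theta_p(\mathfrak{c}(p,g))=t$, and set $u=\mathfrak{c}(p,g)$. By \eqref{equation: centroid in modular cases}, $u\in[x_0,g\cdot x_0]$ and $d(x_0,u)=t$. The interval condition gives $d(u,g\cdot x_0)=\ell(g)-t$, which is consistent with the hypothesis $\ell(g)\ge t$. Next I translate by $g^{-1}$. Since $g^{-1}$ is an isometry, it maps $[x_0,g\cdot x_0]$ onto $[g^{-1}\cdot x_0,x_0]$, so $c\in[x_0,g^{-1}\cdot x_0]$. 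Moreover,
\begin{align*}
d(x_0,c)=d(g\cdot x_0,u)=\ell(g)-t,
\end{align*}
so $c\in S_{\ell(g)-t}(X)$.

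For $B_t(h)$, I take $c=h^{-1}p^{-1}\cdot\mathfrak{c}(p,ph)$ with $\ell(p)=r$ and $\theta_p(\mathfrak{c}(p,ph))=t$, and set $u=\mathfrak{c}(p,ph)$. Here I use two of the three memberships. First, $u\in[x_0,p\cdot x_0]$ with $d(x_0,u)=t$, hence $d(u,p\cdot x_0)=r-t$. Second, $u\in[p\cdot x_0,ph\cdot x_0]$, hence
\begin{align*}
d(u,ph\cdot x_0)=d(p\cdot x_0,ph\cdot x_0)-(r-t)=\ell(h)-(r-t),
\end{align*}
using $d(p\cdot x_0,ph\cdot x_0)=d(x_0,h\cdot x_0)=\ell(h)$. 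Applying the isometry $h^{-1}p^{-1}$ sends $p\cdot x_0\mapsto h^{-1}\cdot x_0$ and $ph\cdot x_0\mapsto x_0$. It therefore maps $[p\cdot x_0,ph\cdot x_0]$ onto $[h^{-1}\cdot x_0,x_0]$, so $c\in[x_0,h^{-1}\cdot x_0]$ and $d(x_0,c)=\ell(h)-(r-t)$.

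No step is a real obstacle; the computation is direct once the roles are identified. The one point needing care is the second inclusion, where it is tempting to use the membership $u\in[x_0,ph\cdot x_0]$. The correct choice is the interval $[p\cdot x_0,ph\cdot x_0]$, since that is the one that becomes $[h^{-1}\cdot x_0,x_0]$ after normalization, and it has to be combined with $[x_0,p\cdot x_0]$ to compute the distance from $u$ to $p\cdot x_0$.
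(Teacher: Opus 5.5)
Your proposal is correct and follows essentially the same route as the paper. For $A_t(g)$ you use $u\in[x_0,g\cdot x_0]$ with $d(x_0,u)=t$; for $B_t(h)$ you use the pair of memberships $u\in[x_0,p\cdot x_0]$ and $u\in[p\cdot x_0,ph\cdot x_0]$; in both cases you then transport by the isometries $g^{-1}$ and $h^{-1}p^{-1}$, exactly as the paper does.
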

\begin{proof}
    By definition,
    \begin{align*}
        A_t(g)=\{g^{-1}\cdot \mathfrak{c}(p,g): \ell(p)=r;\theta_p(\mathfrak{c}(p,g))=t\}.
    \end{align*}
    Fix $g\in G$ and choose $p\in S_r$ with $\theta_p(\mathfrak{c}(p,g))=t$. Set $u=\mathfrak{c}(p,g)$. Then $u\in [x_0,p\cdot x_0]\cap [x_0,g\cdot x_0]$. Since $G$ acts by isometries, it follows that
    \begin{align*}
        d(x_0,g\cdot x_0)=d(x_0,u)+d(u,g\cdot x_0)=d(g^{-1}\cdot x_0,g^{-1}\cdot u)+d(x_0,g^{-1}\cdot u).
    \end{align*}
    Hence $g^{-1}\cdot u\in [x_0,g^{-1}\cdot x_0]$. Since $\theta_p(u)=d(x_0,u)=t$, we also have
    \begin{align*}
        \ell(g)=t+d(x_0,g^{-1}\cdot u).
    \end{align*}
    Therefore $g^{-1}\cdot\mathfrak{c}(p,g)=g^{-1}\cdot u\in [x_0,g^{-1}\cdot x_0]\cap S_{\ell(g)-t}(X)$.
    
    For the second inclusion, recall that
    \begin{align*}
        B_t(h)=\{h^{-1}p^{-1}\cdot \mathfrak{c}(p,ph):\ell(p)=r;\theta_p(\mathfrak{c}(p,ph))=t\}.
    \end{align*}
    First observe that $[p\cdot x_0,ph\cdot x_0]=p\cdot[x_0,h\cdot x_0]$. Indeed, for $m\in X$, we have $m\in[p\cdot x_0,ph\cdot x_0]$ if and only if
        \begin{align*}
            d(x_0,h\cdot x_0)&=d(p\cdot x_0,ph\cdot x_0)\\
            &=d(p\cdot x_0,m)+d(m,ph\cdot x_0)\\
            &=d(x_0,p^{-1}\cdot m)+d(p^{-1}\cdot m,h\cdot x_0).
        \end{align*}
    Thus $m\in[p\cdot x_0,ph\cdot x_0]$ if and only if $p^{-1}\cdot m\in[x_0,h\cdot x_0]$. Setting $m=\mathfrak{c}(p,ph)$ gives 
    \begin{align*}
        d(x_0,h\cdot x_0)&=d(x_0, p^{-1}\cdot m)+d(p^{-1}\cdot m, h\cdot x_0)\\
        &=d(h^{-1}\cdot x_0, h^{-1}p^{-1}\cdot m)+d(x_0,h^{-1}p^{-1}\cdot m).
    \end{align*}
    Hence $h^{-1}p^{-1}\cdot m\in [x_0,h^{-1}\cdot x_0]$. Moreover,
    \begin{align*}
        d(x_0,p^{-1}\cdot m)=d(p\cdot x_0,m).
    \end{align*}
    Since $m\in [x_0,p\cdot x_0]$, we get
    \begin{align*}
        d(x_0,p^{-1}\cdot m)=d(x_0,p\cdot x_0)-d(x_0,m)=r-t,
    \end{align*}
    as $p\in S_r$ and $t=\theta_p(m)=d(x_0,m)$. Therefore,
    \begin{align*}
        \ell(h)=d(x_0,h\cdot x_0)=r-t+d(x_0,h^{-1}p^{-1}\cdot m).
    \end{align*}
    Thus $h^{-1}p^{-1}\cdot m=h^{-1}p^{-1}\cdot \mathfrak{c}(p,ph)\in [x_0,h^{-1}\cdot x_0]\cap S_{\ell(h)-(r-t)}(X)$.
\end{proof}
\begin{lemma}\label{lemma: lower bound median}
    Suppose $(X,d)$ is median.
    Let $f:G\to B(\Hil)$ be supported in $S_r$. Define, for $c\in X$ and $t\in \{0,\ldots, r\}$,
    \begin{align*}
        &G_{c,t}=\{g\in G: c\in [x_0,g^{-1}\cdot x_0], g\cdot c\in S_t(X)\}.
    \end{align*}
    Then, for every $c\in X$ and $t\in \Theta_r$,
    \begin{align*}
        (M_{c,t}(f))_{g,h}= \mathbf{1}_{g\in G_{c,t}}\mathbf{1}_{h\in G_{c,r-t}}f(gh^{-1}).
    \end{align*}
    Moreover,
    \begin{align*}
        \norm{M_{c,t}(f)}\le \norm{\lambda(f)},
    \end{align*}
    for every $(c,t)\in X\times \Theta_r$.
\end{lemma}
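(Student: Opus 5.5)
The plan is to compute the entries of $M_{c,t}(f)$ explicitly using uniqueness of medians. This will show that the Schur multiplier $S_{c,t}$ of Lemma~\ref{lemma: Schur decomposition} is, on the band $gh^{-1}\in S_r$, a single rectangle $\mathbf{1}_{G_{c,t}}\otimes\mathbf{1}_{G_{c,r-t}}$. Corollary~\ref{corollary: lower bound simple version}, applied with a one-element index set, then gives the norm bound.

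\emph{Step 1: reduce the entry to two conditions.} Fix $g,h$ and set $p=gh^{-1}$. Since $f$ is supported in $S_r$, we may assume $\ell(p)=r$; otherwise both sides vanish. In the median case the centroid is forced: $\mathfrak{c}(p,g)=m(x_0,p\cdot x_0,g\cdot x_0)$. Translating by $g^{-1}$ and using that isometries preserve intervals (as in the proof of Lemma~\ref{lemma: inclusions of A and B}) gives
\begin{align*}
g^{-1}\cdot\mathfrak{c}(p,g)=m(g^{-1}\cdot x_0,\,h^{-1}\cdot x_0,\,x_0).
\end{align*}
Also $\theta_p(\mathfrak{c}(p,g))=d(x_0,\mathfrak{c}(p,g))=d(g^{-1}\cdot x_0,\,g^{-1}\cdot\mathfrak{c}(p,g))$. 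So by \eqref{equation: operators M_c,x}, the entry $(M_{c,t}(f))_{g,h}$ equals $f(p)$ exactly when both of the following hold:
\begin{align*}
c=m(g^{-1}\cdot x_0,h^{-1}\cdot x_0,x_0), \qquad d(g^{-1}\cdot x_0,c)=t,
\end{align*}
and it is $0$ otherwise.

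\emph{Step 2: show these two conditions are equivalent to $g\in G_{c,t}$ and $h\in G_{c,r-t}$.} Note that $d(x_0,g\cdot c)=d(g^{-1}\cdot x_0,c)$, so the sphere conditions in $G_{c,t}$ and $G_{c,r-t}$ translate directly.

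For the forward direction, the median lies in all three intervals. In particular $c\in[x_0,g^{-1}\cdot x_0]\cap[x_0,h^{-1}\cdot x_0]$. Moreover $c\in[g^{-1}\cdot x_0,h^{-1}\cdot x_0]$ and $d(g^{-1}\cdot x_0,h^{-1}\cdot x_0)=\ell(p)=r$, which force $d(h^{-1}\cdot x_0,c)=r-t$. Hence $g\in G_{c,t}$ and $h\in G_{c,r-t}$.

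For the converse, suppose $g\in G_{c,t}$ and $h\in G_{c,r-t}$. Then
\begin{align*}
d(g^{-1}\cdot x_0,c)+d(c,h^{-1}\cdot x_0)=t+(r-t)=r=d(g^{-1}\cdot x_0,h^{-1}\cdot x_0),
\end{align*}
so $c$ lies in the third interval as well. By uniqueness of the median, $c=m(g^{-1}\cdot x_0,h^{-1}\cdot x_0,x_0)$, and the condition $d(g^{-1}\cdot x_0,c)=t$ holds by hypothesis. This proves the entry formula.

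\emph{Step 3: the norm bound.} Step 2 says that, whenever $gh^{-1}\in S_r$,
\begin{align*}
(S_{c,t})_{g,h}=\mathbf{1}_{g\in G_{c,t}}\mathbf{1}_{h\in G_{c,r-t}}.
\end{align*}
Apply Corollary~\ref{corollary: lower bound simple version} with $\mathcal{P}_{c,t}$ a singleton, $A=G_{c,t}$ and $B=G_{c,r-t}$. Then $P_1=P_2=1$, which yields $\norm{M_{c,t}(f)}\le\norm{\lambda(f)}$.

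The only delicate point is the converse in Step 2. This is exactly where the median hypothesis, rather than mere modularity, is used: under modularity alone, $c$ lying in all three intervals would not identify it as the chosen centroid.
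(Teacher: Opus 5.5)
Your proof is correct and takes essentially the same route as the paper: both show, via uniqueness of the median, that on the band $gh^{-1}\in S_r$ the Schur multiplier $S_{c,t}$ is the single rectangle $\mathbf{1}_{G_{c,t}}\mathbf{1}_{G_{c,r-t}}$, and then apply Corollary~\ref{corollary: lower bound simple version} with a one-element index set. The only cosmetic difference is in the forward direction: you read it off directly from the median written in $g^{-1}$-translated coordinates, whereas the paper passes through $c\in A_t(g)\cap B_t(h)$ and Lemma~\ref{lemma: inclusions of A and B}.
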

\begin{proof}
    The multiplier $M_{c,t}=S_{c,t}\circ \lambda(f)$ satisfies
    \begin{align*}
        (S_{c,t})_{g,h}=\mathbf{1}_{c=g^{-1}\cdot\mathfrak{c}(gh^{-1},g)}\mathbf{1}_{\theta_{gh^{-1}}(\mathfrak{c}(gh^{-1},g))=t},\qquad gh^{-1}\in S_r.
    \end{align*}
    Fix $g,h\in G$, $c\in X$ and $t\in \Theta_r$ such that $gh^{-1}\in S_r$. We will show that the following are equivalent. 
    \begin{enumerate}
        \item We have $(S_{c,t})_{g,h}=1$.
        \item We have $c\in A_t(g)\cap B_t(h)$.
        \item We have $g\in G_{c,t}$ and $h\in G_{c,r-t}$.
    \end{enumerate}
    By Lemma~\ref{lemma: decomposition of lambda f}, $(1)$ implies $(2)$.
    Suppose that $c\in A_t(g)\cap B_t(h)$. Lemma~\ref{lemma: inclusions of A and B} gives
    \begin{align*}
        c\in [x_0,g^{-1}\cdot x_0]\cap S_{\ell(g)-t}(X).
    \end{align*}
    Consequently,
    \begin{align*}
        d(x_0,g\cdot c)=d(c,g^{-1}\cdot x_0)=d(x_0,g^{-1}\cdot x_0)-d(x_0,c)=\ell(g)-(\ell(g)-t)=t.
    \end{align*}
    Thus $g\in G_{c,t}$, and the same argument gives $h\in G_{c,r-t}$. Hence $(2)$ implies $(3)$.
    Conversely, suppose that $g\in G_{c,t}$, $h\in G_{c,r-t}$, and $gh^{-1}\in S_r$. We show that $(S_{c,t})_{g,h}=1$. Set $p=gh^{-1}$, so that $\ell(p)=r$. By the definition of $G_{c,t}$,
    \begin{align*}
        c\in [x_0,g^{-1}\cdot x_0] \Leftrightarrow g\cdot c\in [x_0,g\cdot x_0].
    \end{align*}
    Combined with $h\in G_{c,r-t}$, we have
    \begin{align*}
        g\cdot c=ph\cdot c\in [x_0,ph\cdot x_0]\cap [p\cdot x_0, ph\cdot x_0].
    \end{align*}
    As $t=d(x_0,g\cdot c)$ and $r-t=d(x_0,h\cdot c)$, we get
    \begin{align*}
        r=d(x_0,g\cdot c)+d(x_0,h\cdot c)=d(x_0,ph\cdot c)+d(x_0,h\cdot c).
    \end{align*}
    Since $r=d(x_0,p\cdot x_0)$, we get
    \begin{align*}
        d(x_0,p\cdot x_0)=d(x_0,ph\cdot c)+d(x_0,h\cdot c)=d(x_0,ph\cdot c)+d(ph\cdot c,p\cdot x_0).
    \end{align*}
    Therefore $ph\cdot c\in [x_0,p\cdot x_0]$. That is,
    \begin{align*}
        g\cdot c=ph\cdot c\in [x_0,ph\cdot x_0]\cap [x_0,p\cdot x_0]\cap [p\cdot x_0, ph\cdot x_0].
    \end{align*}
    By uniqueness of the median,
    \begin{align*}
        g\cdot c=ph\cdot c=\mathfrak{c}(p,ph)=\mathfrak{c}(p,g).
    \end{align*}
    Note that $\theta_p(g\cdot c)=d(x_0,g\cdot c)=t$, as $g\in G_{c,t}$. It follows that
    \begin{align*}
        c=g^{-1}\cdot \mathfrak{c}(p,g)=g^{-1}\cdot \mathfrak{c}(gh^{-1},g),
    \end{align*}
    and $(3)$ implies $(1)$.
    Hence,
    \begin{align*}
        (M_{c,t}(f))_{g,h}=\mathbf{1}_{g\in G_{c,t}}\mathbf{1}_{h\in G_{c,r-t}}f(gh^{-1}).
    \end{align*}
    Corollary~\ref{corollary: lower bound simple version} implies the norm bound $\norm{M_{c,t}(f)}\le \norm{\lambda(f)}$.
\end{proof}
We now prove Theorem~\ref{theorem: median case}.
\begin{proof}[Proof of Theorem~\ref{theorem: median case}]
    Lemma~\ref{lemma: lower bound median} gives
    \begin{align*}
        \sup_{(c,t)\in X\times \{0,\ldots,r\}}\norm{M_{c,t}(f)}\le \norm{\lambda(f)}.
    \end{align*}
    Proposition~\ref{proposition: main decomposition-Haagerup} gives
    \begin{align*}
        \norm{\lambda(f)}\le \sup_{(c,t)\in X\times \{0,\ldots,r\}}\norm{M_{c,t}(f)}\sum_{t=0}^r\norm{U_t}\norm{V_t}.
    \end{align*}
    Combining Lemma~\ref{lemma: bound on norm of U_t and V_t} with Lemma~\ref{lemma: inclusions of A and B} (which only requires modularity), we get
    \begin{align*}
         \sum_{t=0}^r\norm{U_t}\norm{V_t}\le \sum_{t=0}^r \sup_{g,h\in G}\sqrt{|[x_0,g^{-1}\cdot x_0]\cap S_{\ell(g)-t}(X)||[x_0,h^{-1}\cdot x_0]\cap S_{\ell(h)-(r-t)}(X)|}.
    \end{align*}
    Applying the Cauchy--Schwarz inequality and then replacing $t$ by $r-t$ gives
    \begin{align*}
         \norm{\lambda(f)}\le \sup_{(c,t)\in X\times \{0,\ldots,r\}}\norm{M_{c,t}(f)}\sum_{t=0}^r \sup_{g\in G}|[x_0,g^{-1}\cdot x_0]\cap S_{\ell(g)-t}(X)|.
    \end{align*}
    Finally, observe that $c\in [x_0,g^{-1}\cdot x_0]\cap S_{\ell(g)-t}(X)$ if and only if
    \begin{align*}
        \ell(g)=d(x_0,g^{-1}\cdot x_0)=d(x_0,c)+d(c,g^{-1}\cdot x_0)=\ell(g)-t+d(g\cdot c,x_0).
    \end{align*}
    Then,
    \begin{align*}
        \ell(g)=d(x_0,g\cdot x_0)=d(x_0,g\cdot c)+d(g\cdot c,g\cdot x_0).
    \end{align*}
    That is, $c\in [x_0,g^{-1}\cdot x_0]\cap S_{\ell(g)-t}(X)$ if and only if $g\cdot c\in [x_0,g\cdot x_0]\cap S_t(X)$. The result follows by taking the supremum over the larger set $X\supseteq G\cdot x_0$.
\end{proof}
\begin{proof}[Proof of Corollary~\ref{corollary: Buchholz median case}]
    It suffices to show that
    \begin{align*}
        \sup_{(c,t)\in X\times \{0,\ldots,r\}}\norm{M_{c,t}(f)}=\max_{t\in \{0,\ldots, r\}}\norm{B_{t,r-t}(f)}.
    \end{align*}
    By Lemma~\ref{lemma: lower bound median}, we have
    \begin{align*}
        (M_{c,t}(f))_{g,h}=\mathbf{1}_{g\in G_{c,t}}\mathbf{1}_{h\in G_{c,r-t}}f(gh^{-1}),\qquad gh^{-1}\in S_r.
    \end{align*}
    Choose $c=x_0$. Then
    \begin{align*}
        G_{x_0,t}=\{g\in G: x_0\in [x_0,g^{-1}\cdot x_0], g\cdot x_0\in S_t(X)\}.
    \end{align*}
    Since $x_0\in [x_0,g^{-1}\cdot x_0]$,
    \begin{align*}
        G_{x_0,t}=\{g\in G: g\cdot x_0\in S_t(X)\}.
    \end{align*}
    In particular,
    \begin{align*}
        (M_{x_0,t}(f))_{g,h}=\mathbf{1}_{g\cdot x_0\in S_t(X)}\mathbf{1}_{h\cdot x_0\in S_{r-t}(X)}f(gh^{-1}),\qquad gh^{-1}\in S_r.
    \end{align*}
    Thus $M_{x_0,t}(f)=B_{t,r-t}(f)$, and therefore
    \begin{align*}
        \max_{t\in \{0,\ldots, r\}}\norm{B_{t,r-t}(f)}\le \sup_{(c,t)\in X\times \{0,\ldots, r\}}\norm{M_{c,t}(f)}.
    \end{align*}
    For the reverse inequality, observe that
    \begin{align*}
        \mathbf{1}_{g\in G_{c,t}}\mathbf{1}_{h\in G_{c,r-t}}=\mathbf{1}_{g\in G_{c,t}}\mathbf{1}_{h\in G_{c,r-t}}\mathbf{1}_{g\cdot c\in S_t(X)}\mathbf{1}_{h\cdot c\in S_{r-t}(X)},
    \end{align*}
    because membership in $G_{c,t}$ and $G_{c,r-t}$ already imposes the last two conditions. Hence
    \begin{align*}
         (M_{c,t}(f))_{g,h}=\mathbf{1}_{g\in G_{c,t}}\mathbf{1}_{h\in G_{c,r-t}} \left(\mathbf{1}_{g\cdot c\in S_t(X)}\mathbf{1}_{h\cdot c\in S_{r-t}(X)} f(gh^{-1})\right).
    \end{align*}
    Since $G$ acts transitively on $X$, choose $p\in G$ with $c=p\cdot x_0$. Then
    \begin{align*}
        (M_{c,t}(f))_{g,h}=\mathbf{1}_{g\in G_{c,t}}\mathbf{1}_{h\in G_{c,r-t}} \left(\mathbf{1}_{gp\cdot x_0\in S_t(X)}\mathbf{1}_{hp\cdot x_0\in S_{r-t}(X)} f(gh^{-1})\right).
    \end{align*}
    It follows again by Corollary~\ref{corollary: lower bound simple version} that
    \begin{align*}
        \norm{M_{c,t}(f)}\le \norm{B_{t,r-t}^{(p)}(f)}=\norm{B_{t,r-t}(f)},
    \end{align*}
    where $B_{t,r-t}^{(p)}$ is the block obtained by right translation by $p$ and is unitarily equivalent to $B_{t,r-t}$. Hence
    \begin{align*}
        \sup_{(c,t)\in X\times \{0,\ldots, r\}}\norm{M_{c,t}(f)}\le \max_{t\in \{0,\ldots, r\}}\norm{B_{t,r-t}(f)}.
    \end{align*}
\end{proof}

Our first application concerns $\CAT$ cube complexes; see \cite{Schwer2023CATintroduction} for background and notation. Let $X$ be a $\CAT$ cube complex of rank $\nu$, and denote its $0$- and $1$-skeleta by $X^0$ and $X^1$, respectively. It is well known that $X^1$ is a median metric space; see \cite[Section 2]{Bowditch2014embedding}.
\begin{corollary}\label{corollary: CAT bounds}
    Let $X$ be a $\CAT$ cube complex of rank $\nu$ and let $x_0\in X$ be a base point. Suppose that $G$ acts freely on $X^0$. Then, for every $f:G\to B(\Hil)$ supported in $S_r$,
    \begin{align*}
        \sup_{(c,t)\in X^0\times \{0,\ldots, r\}}\norm{M_{c,t}(f)}\le \norm{\lambda(f)}\le \binom{\nu+r}{r}\sup_{(c,t)\in X^0\times \{0,\ldots, r\}}\norm{M_{c,t}(f)}.
    \end{align*}
    Moreover, if $G$ acts transitively (and hence regularly) on $X^0$,
    \begin{align*}
        \max_{t\in \{0,\ldots, r\}}\norm{B_{t,r-t}(f)}\le \norm{\lambda(f)}\le \binom{\nu+r}{r}\max_{t\in \{0,\ldots, r\}}\norm{B_{t,r-t}(f)}.
    \end{align*}
\end{corollary}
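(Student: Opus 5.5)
Both inequalities will follow from Theorem~\ref{theorem: median case} and Corollary~\ref{corollary: Buchholz median case} applied to the median space $X^1$ (with its graph metric, restricted to vertices $X^0$). The only new input is the counting estimate
\begin{align*}
    \sum_{t=0}^r\sup_{y\in X^0}|[x_0,y]\cap S_t(X^0)|\le \binom{\nu+r}{r}.
\end{align*}
Before applying those results, I will check the standing hypotheses. $X^1$ is a connected graph, and it is median by the cited fact from \cite{Bowditch2014embedding}. Uniform local finiteness is not automatic for a general cube complex, so I would either add it as an implicit assumption or observe that the upper bound only uses finiteness of the interval slices, which holds in finite rank. Since $G$ acts freely on $X^0$, the stabilizer of $x_0$ is trivial, so the orbit length $\ell(g)=d(x_0,g\cdot x_0)$ is proper and the framework of Section~\ref{section: proofs} applies.

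For the counting estimate I fix $y\in X^0$ with $d(x_0,y)=n$ and identify $[x_0,y]$ with the set of vertices $v$ whose hyperplane separation set $H(x_0,v)$ is a subset of $H(x_0,y)$ closed under the order relation. Here $H(x_0,y)$ is the set of hyperplanes separating $x_0$ from $y$, ordered by $h_1<h_2$ when $h_1$ separates $x_0$ from $h_2$. This is the standard correspondence between intervals and consistent orientations. A vertex $v\in[x_0,y]$ with $d(x_0,v)=t$ then corresponds to a down-closed subset of size $t$ in the poset $H(x_0,y)$.

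Next I use the rank. Pairwise transverse hyperplanes in $H(x_0,y)$ are exactly the antichains of the poset, and rank $\nu$ means every antichain has at most $\nu$ elements. By Dilworth's theorem the poset is a union of at most $\nu$ chains. A down-closed set is determined by how many initial elements it contains from each chain, so the number of down-closed sets of size $t$ is at most the number of $\nu$-tuples of nonnegative integers summing to $t$, namely $\binom{t+\nu-1}{\nu-1}$. Summing over $t$ with the hockey-stick identity gives
\begin{align*}
    \sum_{t=0}^r\binom{t+\nu-1}{\nu-1}=\binom{r+\nu}{\nu}=\binom{\nu+r}{r}.
\end{align*}
This bound is uniform in $y$, and the degenerate case $t>d(x_0,y)$ contributes nothing. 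Substituting it into Theorem~\ref{theorem: median case} gives the first pair of inequalities.

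For the transitive case, a free and transitive action is regular, so Corollary~\ref{corollary: Buchholz median case} applies directly. Inserting the same counting bound gives the Buchholz-form inequalities, and since $\Theta_r$ is finite the suprema are maxima.

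The main obstacle is the combinatorial step: the identification of interval vertices with down-closed sets, together with the Dilworth decomposition, must genuinely use the rank bound. I would first verify the correspondence carefully using the fact that $d(x_0,v)=|H(x_0,v)|$ in the $1$-skeleton. As a fallback, I would use the known embedding of an interval of a rank-$\nu$ cube complex into $\mathbb{Z}^\nu$ (a Brodzki--Campbell--Guentner--Niblo--Wright type result) and count lattice points in the simplex of $\ell^1$-radius $r$. That count gives the same binomial coefficient.
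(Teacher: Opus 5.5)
Your proposal is correct and follows the paper's route: you reduce to Theorem~\ref{theorem: median case} and Corollary~\ref{corollary: Buchholz median case}, bound each slice $|[x_0,y]\cap S_t(X^0)|$ by $\binom{t+\nu-1}{\nu-1}$, and sum with the hockey-stick identity. The only difference is that you prove the slice bound yourself (down-closed subsets of the hyperplane poset $H(x_0,y)$, antichains $=$ pairwise transverse hyperplanes, Dilworth's theorem), whereas the paper imports it from the proof of \cite[Theorem 0.4]{ChatterjiRuane2005median} via comparison with the grid $\{0,\ldots,r\}^\nu$ and stars-and-bars; your version is self-contained and shows explicitly where the rank bound enters.
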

Theorem~\ref{theorem: median groups} follows from Corollary~\ref{corollary: CAT bounds}, since $G$ acts regularly on its Cayley graph $X=(G, S)$ and every regular action is isomorphic to the left-regular action. 
\begin{proof}[Proof of Corollary~\ref{corollary: CAT bounds}]
    We may assume that $\nu\ge1$, since the rank-zero case is trivial.
    By Theorem~\ref{theorem: median case}, it suffices to show that
    \begin{align*}
        \sum_{t=0}^r\sup_{y\in X^0}|[x_0,y]\cap S_t(X^0)| \le \binom{\nu+r}{r}.
    \end{align*}
    The proof of \cite[Theorem 0.4]{ChatterjiRuane2005median} shows that
    \begin{align*}
        \sup_{y\in X^0}|[x_0,y]\cap S_t(X^0)|\le P_{t,r}(\nu),
    \end{align*}
    where $P_{t,r}(\nu)$ is the number of vertices $v$ occurring in geodesic paths from two opposite vertices $a,b$ of the $\nu$-cube $\{0,\ldots, r\}^\nu$ whose distance to $a$ is $t$. In particular, a stars-and-bars argument yields
    \begin{align*}
        P_{t,r}(\nu)=\binom{\nu+t-1}{\nu-1}.
    \end{align*}
    Summing over $t=0,\ldots,r$ gives
    \begin{align*}
        \sum_{t=0}^r\sup_{y\in X^0}|[x_0,y]\cap S_t(X^0)|\le \sum_{t=0}^r\binom{\nu+t-1}{\nu-1}=\binom{\nu+r}{\nu},
    \end{align*}
    where the last equality is the hockey-stick identity.
\end{proof}
Right-angled Artin groups form an important class of groups acting on $\CAT$ cube complexes. Let $\Gamma=(V,E)$ be a finite graph. We define its right-angled Artin group $G(\Gamma)$ by the presentation
\begin{align*}
    G(\Gamma)=\langle g_v: v\in V\mid [g_u,g_v]=1\text{ for every }(u,v)\in E \rangle.
\end{align*}
$G(\Gamma)$ acts regularly on the universal cover of the Salvetti complex \cite[Section 4.4]{Schwer2023CATintroduction}. The Salvetti complex has rank equal to the largest complete subgraph of $\Gamma$, that is, the clique number $\omega(\Gamma)$.
We therefore obtain the following consequence.
\begin{corollary}\label{corollary: right angled artin groups}
    Let $\Gamma$ be a finite graph with clique number $\omega(\Gamma)$. Then, for every $f:G(\Gamma)\to B(\Hil)$ supported in $S_r$,
    \begin{align*}
         \max_{t\in \{0,\ldots, r\}}\norm{B_{t,r-t}(f)}\le \norm{\lambda(f)}\le \binom{\omega(\Gamma)+r}{r}\max_{t\in \{0,\ldots, r\}}\norm{B_{t,r-t}(f)}.
    \end{align*}
\end{corollary}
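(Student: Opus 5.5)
The plan is to deduce Corollary~\ref{corollary: right angled artin groups} directly from the second part of Corollary~\ref{corollary: CAT bounds}, applied to the action of $G(\Gamma)$ on the universal cover $\widetilde{X}$ of the Salvetti complex of $\Gamma$. The work consists of checking the hypotheses of that corollary and identifying the length function and spherical blocks with those of the word length.

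\textbf{Step 1: the cube complex and its rank.} Recall that $\widetilde{X}$ is a $\CAT$ cube complex whose $1$-skeleton is the Cayley graph of $G(\Gamma)$ with respect to the standard generators $S=\{g_v^{\pm1}: v\in V\}$. Its $0$-skeleton is identified with $G(\Gamma)$, and $G(\Gamma)$ acts on it by left multiplication. The maximal cubes of $\widetilde{X}$ are the translates of the tori of the Salvetti complex corresponding to cliques of $\Gamma$. Hence the maximal cube dimension, which is the rank $\nu$ in the sense used in the proof of Corollary~\ref{corollary: CAT bounds} (the dimension bounding the cube $\{0,\ldots,r\}^\nu$ in the Chatterji--Ruane interval count), equals $\omega(\Gamma)$.

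\textbf{Step 2: the action and the base point.} The action of $G(\Gamma)$ on $\widetilde{X}^0$ is regular, because left multiplication on $G(\Gamma)$ is free and transitive. Take the base point $x_0=1$. The orbit length $\ell(g)=d(x_0,g\cdot x_0)$ is then the combinatorial distance in $\widetilde{X}^1$. This coincides with the word length $\ell_S$, so the spheres $S_r$ and the spherical operators $B_{t,r-t}(f)$ in the statement are exactly those appearing in Corollary~\ref{corollary: CAT bounds}. With these identifications, the second inequality of Corollary~\ref{corollary: CAT bounds} with $\nu=\omega(\Gamma)$ is precisely the claimed two-sided estimate.

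\textbf{Main obstacle.} The only delicate point is making sure that the rank entering the interval count is the clique number. The bound used is
\[
\sup_{y}|[x_0,y]\cap S_t(\widetilde{X}^0)|\le\binom{\nu+t-1}{\nu-1}.
\]
This requires that any family of pairwise transverse hyperplanes in $\widetilde{X}$ has cardinality at most $\omega(\Gamma)$. I would verify it as follows. Hyperplanes of $\widetilde{X}$ are labelled by vertices of $\Gamma$, and two transverse hyperplanes carry labels that are adjacent in $\Gamma$. Hyperplanes with the same label are disjoint, since they are dual to distinct parallel classes of $g_v$-edges. Therefore a pairwise transverse family has pairwise distinct, pairwise adjacent labels, i.e.\ its labels form a clique, and its cardinality is at most $\omega(\Gamma)$. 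With this in hand, the corollary follows.
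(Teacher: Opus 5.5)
Your proposal is correct and takes the same route as the paper. The paper also obtains this corollary from the transitive case of Corollary~\ref{corollary: CAT bounds}, applied to the regular action of $G(\Gamma)$ on the universal cover of the Salvetti complex, whose rank is $\omega(\Gamma)$. Your hyperplane-labelling check of the rank is more explicit than the paper's citation, and only $\nu\le\omega(\Gamma)$ is needed since $\binom{\nu+r}{r}$ is increasing in $\nu$; the cleanest reason that same-label hyperplanes cannot cross is that every square carries two distinct, adjacent labels.
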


\section{Coarse median groups}\label{section: coarse median spaces}
We now extend the preceding results to coarse median spaces \cite{Bowditch2013coarsemedian}. We use the algebraic formulation of median structures; for $\CAT$ cube complexes equipped with their natural metric, this formulation agrees with the metric one \cite[Section 2.2]{NibloWrightZang2019fourpoint}.

A median space $(Y,m)$ is a set $Y$ equipped with a ternary map $m:Y^3\to Y$ satisfying 
\begin{enumerate}
    \item $m(x,y,z)=m(y,z,x)=m(y,x,z)$.
    \item $m(x,x,y)=x$.
    \item $m(x,y,m(z,a,b))=m(m(x,y,z),m(x,y,a),b)$.
\end{enumerate}
For a median space $(Y,m)$, define the intervals
\begin{align*}
    [x,y]=\{z\in Y: m(x,z,y)=z\}=\{m(x,z,y):z\in Y\}.
\end{align*}
A median metric space $(Y,d,m)$ is a metric space $(Y,d)$ for which $(Y,m)$ is a median space. As before, we assume $(Y,d)$ is a connected, uniformly locally finite graph endowed with its graph metric.
We assume that $m$ is $K$-Lipschitz for some $K\ge1$. Fix a base point $x_0\in Y$. We recall the iterated medians from \cite[Definition 5.1]{SpakulaWright2017propertyA}. Let $(x_i)_{i\in \N}\subseteq Y$ and $b\in Y$.
Define the iterated medians recursively by
\begin{align*}
    &m(x_1;b):=x_1\\
    &m(x_1,\ldots,x_{n+1};b):=m(m(x_1,\ldots, x_n;b),x_{n+1},b);\quad n\ge 1.
\end{align*}
By \cite[Lemma 5.2]{SpakulaWright2017propertyA}, the map $(x_1,\ldots,x_n)\mapsto m(x_1,\ldots,x_n;b)$ is symmetric. Hence, for a finite set $A\subseteq Y$, the following definition is independent of the ordering of $A$:
\begin{align*}
    q_{A,b}:=m((x)_{x\in A};b).
\end{align*}
Equivalently, $q_{A,b}$ is the unique point $q$ satisfying
\begin{align}\label{equation: q characterization as intersection}
    \bigcap_{x\in A}[b,x]=[b,q].
\end{align}
For $x\in Y$ and $r\in\N$, let $B_r(x)=\{z\in Y:d(x,z)\le r\}$ and define
\begin{equation}
    \label{equation: map q median case}
    \begin{aligned}
        q \colon Y\times \N &\longrightarrow Y \\
                (x,r) &\longmapsto q_{B_r(x),x_0}.
    \end{aligned}
\end{equation}
For $r\in\N$, write $q_r(\cdot)=q(\cdot,r)$ and call $q_r$ the projection map.
\begin{lemma}\label{lemma: distance of meets}
    Let $(Y,d,m)$ be a median metric space of finite rank $\nu$ and Lipschitz constant $K\ge 1$, and let $x_0\in Y$ be a base point. Let $q: Y\times \N\to Y$ be defined as in \eqref{equation: map q median case} and let $q_r$ be the projection map.
    Then the following statements hold.
    \begin{enumerate}
        \item For every $(x,r)\in Y\times \N$ and $z\in B_r(x)$, we have $q_r(x)\in [x_0,z]$.
        \item There exists a constant $K_{\nu}\ge 1$, depending only on $(K,\nu)$, such that $d(x,q_r(x))\le K_\nu r$ for every $(x,r)\in Y\times \N$.
    \end{enumerate}
\end{lemma}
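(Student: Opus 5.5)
Part (1) is immediate from the characterization \eqref{equation: q characterization as intersection}. Since $B_r(x)$ is finite (uniform local finiteness), $q_r(x)=q_{B_r(x),x_0}$ is well defined and satisfies $[x_0,q_r(x)]=\bigcap_{z\in B_r(x)}[x_0,z]$. Because $q_r(x)\in[x_0,q_r(x)]$, it follows that $q_r(x)\in[x_0,z]$ for every $z\in B_r(x)$.

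For part (2) I will reduce the iterated median over the possibly huge set $B_r(x)$ to an iterated median of at most $\nu$ points, using finite rank. The reduction is the rank-$\nu$ fact quoted in the introduction, that $q$ is determined by at most $\nu$ inputs. I plan to prove it as follows. If the intersection $\bigcap_{z\in A}[x_0,z]$ is not already cut out by some subfamily of size $\le\nu$, then one can pick points $z_1,\dots,z_{\nu+1}\in A$ such that each $[x_0,q_{\{z_1,\dots,z_k\},x_0}]$ strictly decreases. The half-spaces (walls) separating consecutive stages then cross pairwise, and this produces a $(\nu+1)$-cube, contradicting rank $\nu$. Concretely, a wall $H_k$ separates $x_0$ from $q_{k-1}:=q_{\{z_1,\ldots,z_{k-1}\},x_0}$ but not from $z_k$. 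Pairwise transversality of such walls chosen greedily gives the contradiction. This is the standard Helly/Dilworth-type argument for median algebras of rank $\nu$. So there are $z_1,\dots,z_\nu\in B_r(x)$ with $q_r(x)=m(z_1,\dots,z_\nu;x_0)$.

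Given this, the distance bound follows by an induction on the number of points, using the $K$-Lipschitz property. Set $p_1=z_1$ and $p_{k+1}=m(p_k,z_{k+1},x_0)$. I compare $p_{k+1}$ with $x$ through the chain $m(p_k,z_{k+1},x_0)\approx m(p_k,x,x_0)$ and $m(x,x,x_0)=x$. By Lipschitz continuity in the second variable, $d(p_{k+1},m(p_k,x,x_0))\le K d(z_{k+1},x)\le Kr$. By Lipschitz continuity in the first variable, $d(m(p_k,x,x_0),x)\le K d(p_k,x)$. Hence $d(p_{k+1},x)\le K(d(p_k,x)+r)$, starting from $d(p_1,x)\le r$. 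This yields $d(q_r(x),x)\le K_\nu r$ with $K_\nu=\sum_{j=0}^{\nu-1}K^{j}\cdot K^{\,?}$, which is of order $\nu K^{\nu}$ and depends only on $(K,\nu)$. In the exact median metric case one can even take $K=1$ and obtain $K_\nu=\nu$.

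The main obstacle is the reduction to $\nu$ points. Without it, the recursion above accumulates a factor growing with $|B_r(x)|$, so the finite-rank hypothesis must enter exactly there. If the wall argument proves awkward in the algebraic formulation, my fallback is to cite the corresponding statement from \cite{SpakulaWright2017propertyA} or \cite{NibloWrightZang2021} on iterated medians in rank $\nu$. Alternatively, I would use the geometric route: $[x_0,q_r(x)]$ is the intersection of the intervals, and in a rank-$\nu$ cube complex $q_r(x)$ differs from $x$ only along walls meeting $B_r(x)$ and separating $x$ from $x_0$. The walls separating $x$ from $q_r(x)$ are those crossing $B_r(x)$ appropriately, and there are at most $\nu$ pairwise transverse families of them, each of depth $\le r$.
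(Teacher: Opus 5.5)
Your proposal follows the paper's proof. Part (1) is read off from \eqref{equation: q characterization as intersection}. Part (2) reduces $q_r(x)$ to an iterated median of boundedly many points of $B_r(x)$ using finite rank, then bounds its distance to $x$ via the Lipschitz property. The paper cites \cite[Lemma 5.3.iii]{SpakulaWright2017propertyA} for the reduction and \cite[Proposition 5.5]{SpakulaWright2017propertyA} for the bound $3^dK^d\max_i d(x,z_i)$. Your recursion $d(p_{k+1},x)\le K\bigl(d(p_k,x)+r\bigr)$ is a correct and more elementary substitute for the latter. It gives $d(p_n,x)\le (K^{n-1}+K+\cdots+K^{n-1})\,r\le nK^{n-1}r$, and you should write this out in place of the placeholder $K^{\,?}$.

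One correction is needed in your reduction step: for an arbitrary finite set $A$ the right count is $\nu+1$, not $\nu$.
\begin{itemize}
\item A counterexample in rank $1$: in the tripod with centre $q$ and leaves $x_0,x_1,x_2$, one has $q_{\{x_1,x_2\},x_0}=q$, which is not a one-point iterated median.
\item Your own sketch shows the same thing: a chain $z_1,\ldots,z_{\nu+1}$ with strictly decreasing stages produces only the $\nu$ walls $H_2,\ldots,H_{\nu+1}$, so no $(\nu+1)$-cube arises.
\item The fix: choose $H_k$ closest to $x_0$ among the walls that separate $x_0$ from $q_{k-1}$ but not from $q_{A,x_0}$. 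Minimality then rules out nesting in one direction. In the other direction, $H_k\notin\mathcal{W}(x_0|q_k)\supseteq\mathcal{W}(x_0|q_{j-1})$ for $j>k$, so any two of these walls cross.
\item Hence the argument shows that $\nu+1$ points suffice.
\end{itemize}
This does not hurt the lemma, since only a count bounded in terms of $\nu$ is used. The paper writes $d\le\nu$ via its citation, but nothing depends on the exact number. With $n=\nu+1$ your induction gives $K_\nu=(\nu+1)K^{\nu}$, and $\nu+1$ rather than $\nu$ when $K=1$.

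Finally, your fallback ``geometric route'' measures distance by counting walls. That is not available here, because $d$ is only a graph metric and $m$ is only assumed to be $K$-Lipschitz.
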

\begin{proof}
    The result follows from the arguments in \cite[Section 5]{SpakulaWright2017propertyA}. First, \eqref{equation: q characterization as intersection} yields
    \begin{align*}
        \bigcap_{z\in B_r(x)} [z,x_0]=[q_r(x),x_0].
    \end{align*}
    Since $q_r(x)\in[q_r(x),x_0]$, it follows that $q_r(x)\in[x_0,z]$ for every $z\in B_r(x)$. By \cite[Lemma 5.3.iii]{SpakulaWright2017propertyA}, there exist $d\le \nu$ and $z_1,\ldots, z_d\in B_r(x)$ such that
    \begin{align*}
        q_r(x)=m(z_1,\ldots, z_d;x_0).
    \end{align*}
    By \cite[Proposition 5.5]{SpakulaWright2017propertyA}, there exists a constant $K_d:=3^dK^d$ such that
    \begin{align*}
        d(x,q_r(x))=d(x,m(z_1,\ldots, z_d;x_0)) \le 3^dK^d \max_{1\le i\le d}d(x,z_i)\le K_\nu r.
    \end{align*}
\end{proof}
We now describe the coarse median setting. A connected, uniformly locally finite metric graph $(X,d)$ equipped with a ternary map $\mu: X^3\to X$ is a \textit{coarse median space} if there exist $\rho\in\R_+$ and $h:\N\to\R_+$ satisfying the following conditions.
\begin{enumerate}
    \item $\mu$ is uniformly $\rho$-Lipschitz up to the additive constant $h(0)$: for all $x,y,z,x',y',z'\in X$,
    \begin{align*}
        d(\mu(x,y,z),\mu(x', y',z'))\le \rho(d(x,x')+d(y, y')+d(z,z'))+h(0).
    \end{align*}
    \item\label{property: coarse median property} For every $p\in\N$ and every $Y\subseteq X$ with $|Y|\le p$, there exist a finite $\CAT$ cube complex $\Pi$ with median operation $m$, together with maps $E:Y\to\Pi$ and $\pi:\Pi\to X$, such that, for all $x,y,z\in\Pi$,
    \begin{align*}
        d\left(\pi(m(x,y,z)), \mu(\pi(x),\pi(y),\pi(z))\right)\le h(p),
    \end{align*}
    and
    \begin{align*}
        d\left(a, \pi\circ E(a)\right)\le h(p),
    \end{align*}
    for every $a\in Y$.
    \item\label{property: median aspect} For all $x,y,z\in X$, we have $\mu(x,x,y)=x$ and $\mu(x,y,z)=\mu(y,z,x)=\mu(y,x,z)$.
\end{enumerate}
We call $(\rho,h)$ the parameters of $(X,d,\mu)$. Condition~\ref{property: median aspect} is not part of Bowditch's original definition, but, as in \cite{Bowditch2014embedding}, one may replace the coarse median by another at uniformly bounded distance so that this condition holds. The space has rank at most $\nu$ if the cube complexes $\Pi$ in Condition~\ref{property: coarse median property} can always be chosen with rank at most $\nu$.

Let $(X,d,\mu)$ be a coarse median space. 
For $\kappa>0$, define the coarse interval
\begin{align*}
    &[x,y]_\kappa=\{z\in X: d(z,\mu(x,y,z))\le \kappa\}.
\end{align*}

The main theorem of this section is the following.
\begin{theorem}\label{theorem: coarse median bounds}
    Let $(X,d,\mu)$ be a coarse median space of rank at most $\nu$ with base point $x_0$, and let $G$ be a finitely generated group acting isometrically on $X$. Equip $G$ with the proper orbit length $\ell(g)=d(x_0,g\cdot x_0)$. Then there exists a function $P:\N\to\R_+$ satisfying
    \begin{align*}
        P(r)=o(r^{\nu/2+\varepsilon})\qquad\text{for every }\varepsilon>0,
    \end{align*}
    such that, for every $f:G\to B(\Hil)$ supported in $S_r$,
    \begin{align*}
        \sup_{c\in  X}\norm{M_{c}(f)}\le \norm{\lambda(f)}\le P(r)\sup_{c\in X}\norm{M_{c}(f)},
    \end{align*}
    where the operators $M_c(f)$ are defined explicitly in the proof.
    Moreover, there is an affine function $L:\N\to\R_+$ such that, if the action of $G$ on $X$ is transitive, then
    \begin{align*}
        \norm{\lambda(f)}\le (2r+1)P(r)\max_{\substack{0\le i\le L(r)\\ |i-j|\le r \le i+j}}\norm{B_{i,j}(f)}.
    \end{align*}
\end{theorem}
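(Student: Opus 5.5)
The plan is to run the centroid machinery of Proposition~\ref{proposition: main decomposition-Haagerup} with a centroid built from the coarse iterate median of Proposition~\ref{proposition: coarse meet existence}, rather than from exact medians. For $g,h\in G$ the first guess $\mathfrak{c}(g,h)=\mu(x_0,g\cdot x_0,h\cdot x_0)$ is not rigid enough to give a Schur factorization, so I will snap it to a canonical point. Fix a constant $\kappa$ depending only on $(\rho,h,\nu)$. Put $z=\mu(x_0,g\cdot x_0,h\cdot x_0)$. Let $\mathfrak{c}(g,h)$ be the coarse meet $q(Z)$ produced by Proposition~\ref{proposition: coarse meet existence} for a ball $Z$ of radius comparable to $h(3)$ about $z$. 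This point lies in $[x_0,w]_\kappa$ for every $w\in Z$, it is at distance $O(1)$ from $z$, and it depends only on $Z$. The parametrization will be trivial, with $|\Theta_r|=1$. Then $M_c(f)=M_{c,t}(f)$, and the upper bound reads $\norm{\lambda(f)}\le\norm{U}\norm{V}\sup_c\norm{M_c(f)}$. I will take $P(r)=\sqrt{P_1(r)P_3(r)}$, and the rest of the proof consists of bounding $P_1$ and $P_3$ and proving the lower bound.

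\textbf{Counting.} By Lemma~\ref{lemma: bound on norm of U_t and V_t}, $\norm{U}^2$ is at most the number of values $\mathfrak{c}(g,h)$ with $\ell(g)=r$ and $h$ fixed. Every such value lies within $O(1)$ of the coarse interval $[x_0,h\cdot x_0]_\kappa$ and within distance about $r$ of $x_0$. I expect the quasi-median structure of rank $\nu$ coarse intervals, via finite median approximations and Lemma~\ref{lemma: distance of meets}, to bound the number of such points by $O(r^{\nu})$ up to subpolynomial losses. This is the classical Bowditch estimate for coarse intervals intersected with spheres, summed over a bounded thickening. The same argument applied after translating by $g^{-1}$ handles $\norm{V}^2$. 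That gives $P(r)=O(r^{\nu/2}\cdot r^{\varepsilon})$.

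To upgrade this to $o(r^{\nu/2+\varepsilon})$, I will absorb the $\log$-type losses that come from the dependence of $h(p)$ on the number of points. The key input is again Proposition~\ref{proposition: coarse meet existence}: it makes the error independent of $|Z|$, so only a fixed-size approximation is ever used. This step is where I expect the main obstacle to be. In the exact median case Lemma~\ref{lemma: inclusions of A and B} gives sharp sphere membership. Here I instead need $A(g)\subseteq [x_0,g^{-1}\cdot x_0]_{\kappa'}\cap\{\,|d(x_0,\cdot)-k|\le C\,\}$ for the appropriate $k$, and I need slice counts for thickened coarse intervals that are polynomial of degree $\nu-1$ per slice, uniformly in the slice.

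\textbf{Lower bound and spherical blocks.} I will show that $S_c$ factors as in Corollary~\ref{corollary: lower bound simple version} with $|\mathcal{P}_c|=1$. The argument follows Lemma~\ref{lemma: lower bound median}: the canonicity of $\mathfrak{c}$ should make the condition $c=g^{-1}\cdot\mathfrak{c}(gh^{-1},g)$ split into a condition on $g$ alone and a condition on $h$ alone, up to the fixed snapping. If it does not split exactly, I will fall back to a bounded-size factorization, at the cost of a constant. Under transitivity, I will take $c=p\cdot x_0$ and compare $M_c(f)$ with spherical blocks. Since $d(x_0,g\cdot c)=i$ and $d(x_0,h\cdot c)=j$, the triangle inequality gives $|i-j|\le r\le i+j$. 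The linear displacement bound gives $i\le L(r)$. Decomposing $M_c(f)$ into at most $2r+1$ pieces indexed by $i-j$ and applying Corollary~\ref{corollary: lower bound simple version} to each piece yields the factor $(2r+1)$ and the final estimate.
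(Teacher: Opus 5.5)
\textbf{There is a genuine gap, and it comes from your choice of centroid.}

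\emph{The counting does not give the exponent $\nu/2$.} With trivial parametrization, Proposition~\ref{proposition: main decomposition-Haagerup} gives the factor $\norm{U}\norm{V}$. You bound both $\norm{U}^2$ and $\norm{V}^2$ by roughly $r^{\nu}$, so the product is of order $r^{\nu}$, not $r^{\nu/2}$. The sentence ``That gives $P(r)=O(r^{\nu/2}\cdot r^{\varepsilon})$'' is an arithmetic slip.

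Sharper counting cannot rescue a two-sided centroid $\mathfrak{c}(g,h)\approx\mu(x_0,g\cdot x_0,h\cdot x_0)$. Take $\mathbb{F}_2\times\mathbb{F}_2$ acting on a product of two trees (rank $2$, exact medians) and $h$ far away. The medians $\mu(x_0,g\cdot x_0,h\cdot x_0)$, $g\in S_r$, fill all $\binom{r+2}{2}$ points of $[x_0,h\cdot x_0]\cap B_r(x_0)$. The same holds for $V$, so $\norm{U}\norm{V}\gtrsim r^2$.

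The missing idea is to make the centroid one-sided. The paper takes $\mathfrak{c}(p,g)=g\cdot q_{\ell(p)}(g^{-1}\cdot x_0)$, which depends only on $g$ and $\ell(p)$.
\begin{itemize}
\item Then $A(g)=\{q_r(g^{-1}\cdot x_0)\}$, so $\norm{U}\le 1$.
\item All the counting moves to $B(h)=\{q_r(h^{-1}p^{-1}\cdot x_0):\ell(p)=r\}$.
\item Since $d(h^{-1}p^{-1}\cdot x_0,h^{-1}\cdot x_0)=r$, Property~\ref{property: m1} puts $B(h)$ inside the single interval $[x_0,h^{-1}\cdot x_0]_\kappa$.
\item Property~\ref{property: m2} puts $B(h)$ inside the ball of radius $r+L(r)$ about $h^{-1}\cdot x_0$.
\item \cite[Proposition 9.8]{Bowditch2014embedding} then gives $|B(h)|\le D(r)=o(r^{\nu+\varepsilon})$, hence $P=\sqrt{D}$.
\end{itemize}
This is where the cardinality independence of Proposition~\ref{proposition: coarse meet existence} really matters. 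There $q_r$ is a coarse meet of the whole $r$-ball, and $\kappa$ must not grow with $r$. Your use of $q$ on balls of bounded radius does not need it.

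Your worry about ``upgrading'' $O(r^{\nu/2+\varepsilon})$ to $o(r^{\nu/2+\varepsilon})$ is misplaced: that step is automatic. Note also that $A(g)$ and $B(h)$ consist only of values of $q_r$, so the paper never needs $G$ to coarsely preserve $\mu$. Your ``translate by $g^{-1}$'' step for $\norm{V}$ tacitly assumes that, and the statement does not.

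\emph{The lower bound has a second gap.} For a two-sided centroid, the condition $c=g^{-1}\cdot\mathfrak{c}(gh^{-1},g)$ is a joint condition on $(g,h)$. The splitting in Lemma~\ref{lemma: lower bound median} uses two things: uniqueness of medians, and the exact identity $d(x_0,g\cdot c)+d(x_0,h\cdot c)=r=d(g^{-1}\cdot x_0,h^{-1}\cdot x_0)$, which forces $c\in[g^{-1}\cdot x_0,h^{-1}\cdot x_0]$. Neither survives in the coarse setting, and snapping to a canonical point does not restore them.

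Your fallback ``bounded-size factorization'' is asserted without argument. Even if it held, it would give only $\sup_c\norm{M_c(f)}\le C\norm{\lambda(f)}$, not the constant-one inequality in the statement.

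With the one-sided centroid, the multiplier $\mathbf{1}_{c=q_r(g^{-1}\cdot x_0)}$ depends only on the row index. It is therefore a row compression of norm one. This gives the lower bound immediately, and it also lets you discard the condition $g\in G_c$ before right-translating to spherical blocks. From that point on, your final step matches the paper's row--column degree estimate: the constraints $i\le L(r)$ and $|i-j|\le r\le i+j$, and the $(2r+1)$ count.
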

Theorem~\ref{theorem: coarse median groups} follows from Theorem~\ref{theorem: coarse median bounds}, since $G$ acts regularly on its Cayley graph $X=(G,S)$.

The proof of Theorem~\ref{theorem: coarse median bounds} requires a coarse analogue of Lemma~\ref{lemma: distance of meets}. Let $q:X\times\N\to X$ and write $q_r(\cdot)=q(\cdot,r)$. We say that $(X,d,\mu,q)$ satisfies the \textit{coarse iterate median criteria} with respect to $x_0$ if
\begin{enumerate}[label=(M\arabic*)]
    \item\label{property: m1} There exists a constant $\kappa>0$ such that for all $(x,r)\in X\times \N$ and $z\in B_r(x)$, we have $q_r(x)\in [x_0,z]_\kappa$.
    \item\label{property: m2} There exists an affine function $L:\N\to \N$ such that for all $(x,r)\in X\times \N$, we have $d(x,q_r(x))\le L(r)$.
\end{enumerate}
When these conditions hold, we call $(\kappa, L)$ the parameters of $q$. The following proposition produces such a map with uniform parameters.
\begin{proposition}\label{proposition: coarse meet existence}
    Let $(X,d,\mu)$ be a coarse median space of rank at most $\nu$, and let $x_0\in X$ be a base point. Then there exists a map $q: X\times \N\to X$ satisfying the coarse iterate median criteria, with parameters depending only on $(\rho,h)$ and $\nu$.
\end{proposition}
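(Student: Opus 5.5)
Given $(x,r)$, we will build $q_r(x)$ by an iterative error-reduction scheme around the ball $Z=B_r(x)$, using coarse median approximations only for sets of \emph{bounded} cardinality. That way the constant $h(p)$ enters only for $p$ depending on $\nu$, never on $|Z|$.

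\textbf{Step 1 (bounded-cardinality meets).} Work first with a finite subset $A\subseteq X$ with $|A|\le p$. Take the cube complex $\Pi$ approximating $A\cup\{x_0\}$ given by Condition~\ref{property: coarse median property}. Inside $\Pi$, form the exact iterated median $q'=m((E(a))_{a\in A};E(x_0))$. By \eqref{equation: q characterization as intersection}, $q'\in[E(x_0),E(a)]$ for each $a$. Pushing forward by $\pi$ and using the $h(p)$-control and the coarse Lipschitz property of $\mu$, the point $\pi(q')$ lies in $[x_0,a]_{\kappa_p}$ for all $a\in A$, where $\kappa_p$ depends only on $(\rho,h,p)$. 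By Lemma~\ref{lemma: distance of meets}(2) applied in $\Pi$ (rank $\le\nu$), together with the quasi-isometric control of $\pi\circ E$, we also get $d(\pi(q'),a)\le C_p\operatorname{diam}(A\cup\{a\})+C_p$. Moreover, rank $\le\nu$ means the exact meet in $\Pi$ is already realized by $\le\nu$ of the inputs (\cite[Lemma 5.3]{SpakulaWright2017propertyA}).

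\textbf{Step 2 (choosing the seed).} Next we pick finitely many points of $Z$ that control the whole ball. Take the meet $y_1$ of a maximal "separating" subfamily of size at most $\nu$; to find it, either use a finite $R$-net or argue by contradiction that too many independent directions would force a cube of dimension $>\nu$ in some approximating $\Pi$. The point is that for every $z\in Z$, the point $y_1$ should lie in $[x_0,z]_{\kappa'}$ up to an error proportional to $r$ only. At the first stage we settle for a point $y_1$ with $d(x,y_1)\le C r$ and $y_1\in[x_0,z]_{\epsilon_1}$ for all $z\in Z$, where $\epsilon_1\le c\,r+C$.

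\textbf{Step 3 (geometric error reduction).} Given $y_k$ with interval error $\epsilon_k$ relative to all $z\in Z$, we replace $Z$ by the set $\{\mu(x_0,y_k,z):z\in Z\}$. Each such point is within $\epsilon_k$-controlled distance of $y_k$, so this set has radius $O(\epsilon_k)$ around $y_k$. Reapplying Step 2 to this smaller configuration gives $y_{k+1}$ with $d(y_k,y_{k+1})\le C\epsilon_k$. The coarse four-point/median identities (the coarse version of axiom (3)) then transfer membership $y_{k+1}\in[x_0,\mu(x_0,y_k,z)]_{\cdot}$ into $y_{k+1}\in[x_0,z]_{\epsilon_{k+1}}$ with $\epsilon_{k+1}\le\frac12\epsilon_k+C'$.

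Iterating until $\epsilon_k$ reaches the fixed point $\kappa:=2C'+1$ takes $O(\log r)$ steps. The total displacement is then $\sum_k C\epsilon_k\le 2C\epsilon_1+O(\log r)$, which is linear in $r$; this gives \ref{property: m1} and \ref{property: m2} with an affine $L$. We set $q_r(x)$ to be the final point.

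\textbf{Main obstacle.} The crux is the contraction $\epsilon_{k+1}\le\tfrac12\epsilon_k+C'$. One must show that a coarse meet of a set of radius $R$ around a point already near all intervals moves the interval error by a \emph{fraction} of $R$, with additive loss independent of $|Z|$. I would try to derive this inside a single approximating cube complex for a bounded configuration: $x_0$, $y_k$, and at most $\nu+1$ witnesses, the test point $z$ being added as one more input. In median algebra, the interval error of a point is measured by the hyperplanes separating it from the interval. The contraction should then follow because projecting onto $[x_0,y_k]$ halves the number of such hyperplanes counted with the rank bound. If halving fails, any fixed factor $<1$ suffices.
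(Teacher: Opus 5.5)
Your architecture matches the paper's. You start from $x\in[x_0,z]_{\rho r+h(0)}$ for all $z\in B_r(x)$. At each stage you pass to the points $m_z=\mu(x_0,y_k,z)$, which lie in $B_{\epsilon_k}(y_k)\cap[x_0,y_k]_\lambda$, and find one new point whose error relative to every $m_z$ is a small multiple of $\epsilon_k$ plus a constant. You then transfer back to $[x_0,z]$ with the coarse interval inclusions of Lemma~\ref{lemma: inclusions coarse}, and sum a geometric series for the displacement.

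The genuine gap is the step you flag as the main obstacle, and it is the whole content of the proposition. The set $\{m_z\}$ has radius $\epsilon_k$ but is as large as $B_r(x)$, so Step~1 cannot be applied to it. For such a configuration you must produce a single point with error $c\,\epsilon_k+C$, where $C$ is independent of $|Z|$ and $r$. The tolerance at that stage is also not ``any factor $<1$''. The iteration tolerates any post-transfer factor $<1$, but the transfer from $[x_0,m_z]$ to $[x_0,z]$ multiplies the error by about $\rho+1$ (see $Q_1$). So the mechanism must deliver a pre-transfer factor $c<1/(\rho+1)$, in practice an arbitrarily small, tunable one. Step~2 only promises $c\,r+C$ with unspecified $c$. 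The trivial choice $y_1=x$ already gives $c=\rho$, which yields no contraction.

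The bounded-witness idea does not close this gap. For $h(p)$ to enter only with $p=O(\nu)$, you would need boundedly many witnesses in $Z$ whose coarse meet lies in $[x_0,z]_C$ for \emph{every} $z\in Z$, with $C$ independent of $|Z|$. That is a uniform coarse version of \cite[Lemma 5.3]{SpakulaWright2017propertyA}, which is precisely what is unavailable; if you had it, no iteration would be needed. Adding the test point $z$ to an approximating cube complex after the witnesses are fixed does not force the witnesses' meet into $[E(x_0),E(z)]$. The hyperplane-halving heuristic lives in an exact median algebra, where the meet already has zero interval error. The errors that must shrink come from $h(p)$ and $\rho$, and the heuristic does not touch them.

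The paper supplies the missing ingredient through \cite[Corollary 6.3]{SpakulaWright2017propertyA}, stated as Lemma~\ref{lemma: wright corollary}. For each $T$ and all $r\ge R_T$ there is $h\in[x_0,a]_{L_1(r)}$ with $d(a,h)\le K_\nu rT+r$ and $B_{rT}(a)\cap[x_0,a]_\lambda\subseteq[a,h]_{L_2(r)}$. Crucially, the slopes of $L_1,L_2$ do not depend on $T$. Taking $r\approx E/T$ absorbs all $m_z$ at once. After two applications of Lemma~\ref{lemma: inclusions coarse} the error is $W(r)=Ar+B\le (A/T)E+C_1$, with $A/T<1/2$ once $T$ is large. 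This zoom property, a whole ball of radius $rT$ captured with error linear in $r$ alone, makes the constant independent of $|Z|$ and the contraction factor adjustable. Without it, or a proof of an equivalent statement, Step~3 has nothing to iterate.

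A smaller point: the definition gives no metric control on $\Pi$ or on $\pi$. The distance bound in Step~1 must therefore come from the coarse Lipschitz property of $\mu$ applied to an iterated median of at most $\nu$ points, not from quasi-isometric control of $\pi\circ E$.
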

We first deduce Theorem~\ref{theorem: coarse median bounds} from this proposition.
\begin{proof}[Proof of Theorem~\ref{theorem: coarse median bounds}]
    Let $q$ be provided by Proposition~\ref{proposition: coarse meet existence}, and define the centroid map $\mathfrak{c}:G^2\to X$ by
    \begin{align*}
        \mathfrak{c}(p,g)=g\cdot q_{\ell(p)}(g^{-1}\cdot x_0).
    \end{align*}
    Set $\Theta_r=\{*\}$ and let every parametrization map $\theta_g$ be constant. We may assume that $S_r\ne\varnothing$, since otherwise the statement is trivial.
    In this case, we write
    \begin{align*}
        &A_*(g)=\{g^{-1}\cdot \mathfrak{c}(p,g): \ell(p)=r\}=\{q_r(g^{-1}\cdot x_0)\}.\\
        &B_*(h)=\{h^{-1}p^{-1}\cdot \mathfrak{c}(p,ph): \ell(p)=r\}=\{q_r(h^{-1}p^{-1}\cdot x_0):\ell(p)=r\}.
    \end{align*}
    The corresponding block entries are
    \begin{align*}
        (M_{c,*}(f))_{g,h}=\mathbf{1}_{c=q_r(g^{-1}\cdot x_0)}f(gh^{-1}),\qquad gh^{-1}\in S_r.
    \end{align*}
    We suppress the index $*$ from now on. Since $|A(g)|\le1$ for every $g\in G$, Lemma~\ref{lemma: bound on norm of U_t and V_t} gives $\norm{U}\le1$. Fix $p\in S_r$ and set $x=h^{-1}p^{-1}\cdot x_0$ and $z=h^{-1}\cdot x_0$. Because the action is isometric, $d(x,z)=\ell(p)=r$, so $z\in B_r(x)$. Property~\ref{property: m1} gives 
    \begin{align*}
        q_r(x)\in [x_0,z]_\kappa.
    \end{align*}
    Therefore,
    \begin{align*}
        B(h)\subseteq [x_0,z]_\kappa=[x_0,h^{-1}\cdot x_0]_\kappa.
    \end{align*}
    By the triangle inequality and Property~\ref{property: m2}, for every $p\in S_r$,
    \begin{align*}
        d(h^{-1}\cdot x_0, q_r(h^{-1}p^{-1}\cdot x_0))&\le d(h^{-1}\cdot x_0, h^{-1}p^{-1}\cdot x_0)+d(h^{-1}p^{-1}\cdot x_0, q_r(h^{-1}p^{-1}\cdot x_0))\\
        &=r+L(r).
    \end{align*}
    Hence $B(h)$ has diameter at most $2\widetilde{L}(r)$, where $\widetilde{L}(r)=r+L(r)$. By \cite[Proposition 9.8]{Bowditch2014embedding}, there is a function $D:\N\to\N$ such that
    \begin{align*}
        D(r)=o(r^{\nu+\varepsilon})\qquad\text{for every }\varepsilon>0,
        \qquad |B(h)|\le D(r).
    \end{align*}
    Proposition~\ref{proposition: main decomposition-Haagerup} therefore yields
    \begin{align*}
        \norm{\lambda(f)}\le \sqrt{D(r)}\sup_{c\in X}\norm{M_c(f)}.
    \end{align*}
    Setting $P(r)=\sqrt{D(r)}$ gives $P(r)=o(r^{\nu/2+\varepsilon})$ for every $\varepsilon>0$.
    The Schur multiplier $\mathbf{1}_{c=q_r(g^{-1}\cdot x_0)}$ depends only on the row index $g$. Hence Corollary~\ref{corollary: lower bound simple version} gives
    \begin{align*}
        \sup_{c\in X}\norm{M_c(f)} \le \norm{\lambda(f)}.
    \end{align*}
    This proves the first assertion of Theorem~\ref{theorem: coarse median bounds}. Now set $G_c=\{g\in G: c=q_r(g^{-1}\cdot x_0)\}$. Then
    \begin{align*}
        (M_c(f))_{g,h}=\mathbf{1}_{g\in G_c}f(gh^{-1}),\qquad gh^{-1}\in S_r.
    \end{align*}
    For $c\in X$ and $i,j\in \N$, define
    \begin{align*}
        &R_{c,i}=\{g\in G: c=q_{r}(g^{-1}\cdot x_0); d(c,g^{-1}\cdot x_0)=i\}\\
        &C_{c,j}=\{h\in G:d(c,h^{-1}\cdot x_0)=j\}.
    \end{align*}
    Let $E_{c,i},F_{c,j}$ be the orthogonal projections onto $R_{c,i}$ and $C_{c,j}$, respectively. If $(M_c(f))_{g,h}\ne 0$, set
    \begin{align*}
        i=d(c, g^{-1}\cdot x_0);\quad j=d(c,h^{-1}\cdot x_0).
    \end{align*}
    Then, by Property~\ref{property: m2}, we have
    \begin{align*}
        i=d(q_{r}(g^{-1}\cdot x_0),g^{-1}\cdot x_0) \le L(r).
    \end{align*}
    Moreover,
    \begin{align*}
        j=d(q_{r}(g^{-1}\cdot x_0), h^{-1}\cdot x_0)\le i+d(g^{-1}\cdot x_0,h^{-1}\cdot x_0)=i+r\le r+L(r).
    \end{align*}
    Similarly, $i\le j+r$.
    Finally, 
    \begin{align*}
        i+j=d(q_{r}(g^{-1}\cdot x_0), g^{-1}\cdot x_0)+d(q_{r}(g^{-1}\cdot x_0), h^{-1}\cdot x_0)\ge r.
    \end{align*}
    Let
    \begin{align*}
        C_r=\{(i,j)\in \N: i\le L(r);|i-j|\le r\le i+j\}.
    \end{align*}
    Therefore, we have
    \begin{align*}
        (M_{c}(f))_{g,h}= \mathbf{1}_{c=q_r(g^{-1}\cdot x_0)}\mathbf{1}_{(d(c,g^{-1}\cdot x_0),d(c,h^{-1}\cdot x_0))\in C_r}f(gh^{-1}),\qquad gh^{-1}\in S_r.
    \end{align*}
    The Schur multiplier $\mathbf{1}_{c=q_r(g^{-1}\cdot x_0)}$ is a row compression. Corollary~\ref{corollary: lower bound simple version} gives
    \begin{align*}
        \norm{M_c(f)}\le \norm{\left(\mathbf{1}_{(d(c,g^{-1}\cdot x_0),d(c,h^{-1}\cdot x_0))\in C_r}f(gh^{-1})\right)_{g,h\in G}}.
    \end{align*}
    Since the action is transitive, choose $p\in G$ with $p\cdot x_0=c$. Conjugating by the right-translation unitary $g\mapsto gp$ gives
    \begin{align*}
          \sup_{c\in X}\norm{M_c(f)}\le \norm{\left(\mathbf{1}_{(\ell(g),\ell(h))\in C_r}f(gh^{-1})\right)_{g,h\in G}}.
    \end{align*}
    The operator on the right can be written as
    \begin{align*}
        \mathbf{1}_{(\ell(g),\ell(h))\in C_r}f(gh^{-1})=\sum_{(i,j)\in C_r} \Id\otimes P_{i}\lambda(f) \Id\otimes P_{j},
    \end{align*}
    where $P_i$ is the projection onto $\ell^2(S_i)$. Orthogonality of these projections and the row--column degree estimate give
    \begin{align*}
        \norm{\left(\mathbf{1}_{(\ell(g),\ell(h))\in C_r}f(gh^{-1})\right)_{g,h\in G}}\le K_r\max_{(i,j)\in C_r}\norm{B_{i,j}(f)},
    \end{align*}
    where
    \begin{align*}
        K_r^2=\max_{i\in \N}\#\{j\in \N:(i,j)\in C_r\} \max_{j\in \N}\#\{i\in \N:(i,j)\in C_r\}.
    \end{align*}
    For each fixed $i$ or $j$, the condition $|i-j|\le r$ leaves at most $2r+1$ possibilities for the other index. Hence $K_r^2\le(2r+1)^2$. Combining this with the first part yields
    \begin{align*}
        \norm{\lambda(f)}\le (2r+1)P(r)\max_{(i,j)\in C_r}\norm{B_{i,j}(f)}.
    \end{align*}
\end{proof}
We prove Proposition~\ref{proposition: coarse meet existence} using the following general result.
\begin{proposition}\label{proposition: coarse approximation}
    Let $(X,d,\mu)$ be a coarse median metric space of rank at most $\nu$, with base point $x_0$. Then there are constants $E^*,K_1,K_2$ depending only on $\nu$ and the coarse parameters such that, for every $Z\subseteq X$ and $a\in X$ satisfying
    \begin{align*}
        R(a,Z):=\sup_{z\in Z}d(a,z)<\infty,
    \end{align*}
    there exists $h\in X$ with the properties
    \begin{enumerate}[label=(P\arabic*)]
        \item\label{property: inclusion with E*} $h\in [x_0,z]_{E^*}$ for every $z\in Z$.
        \item\label{property: distance bound with K_1K_2} $d(a,h)\le K_1R(a,Z)+K_2$.
    \end{enumerate}
\end{proposition}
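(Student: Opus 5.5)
We plan to run an iterative error-reduction scheme, as announced in the introduction: first produce a point with a coarse error that is uniform in $|Z|$ at a \emph{large} scale, then refine it geometrically, summing the displacements as a geometric series.

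\textbf{Step 1 (basic move via rank).} Fix $Z$ and $a$, and set $R=R(a,Z)$. The key input is a finite-subset version of Lemma~\ref{lemma: distance of meets}. For any finite $W\subseteq X$ with $|W|\le p$, apply the coarse median approximation (Condition~\ref{property: coarse median property}) to $W\cup\{x_0,a\}$, and form the exact iterated median $q=m((E(w))_{w\in W};E(x_0))$ in the cube complex $\Pi$. Then push forward to $h_W=\pi(q)$. In $\Pi$, rank at most $\nu$ gives $q=m(E(w_1),\dots,E(w_d);E(x_0))$ with $d\le\nu$ (\cite[Lemma 5.3]{SpakulaWright2017propertyA}). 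Hence $h_W$ is within error $h(\nu+2)$-type constants of the coarse iterated median $\mu(\cdots\mu(w_1,w_2,x_0)\cdots,w_d,x_0)$. That expression involves only $d\le\nu$ points, so its coarse-interval error relative to each $w\in W$ is bounded by a constant depending on $\nu$. I would reorganize this so that membership in $[x_0,w]_{E}$ is checked using only the $\nu+2$ points $\{w_1,\dots,w_d,w,x_0\}$. That is, I apply the approximation to that small set, not to $W$, which makes the error independent of $|W|$. Displacement is controlled as in Lemma~\ref{lemma: distance of meets}: $d(a,h_W)\le 3^\nu\rho^\nu\max_i d(a,w_i)+C$.

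\textbf{Step 2 (passing from finite to arbitrary $Z$).} The issue is that the $d$ witnesses are chosen from $W$, and the interval condition must hold for all $z\in Z$. I would use a greedy selection. Starting from $h_0=a$, if some $z\in Z$ violates $h\in[x_0,z]_{E}$, replace $h$ by $\mu(h,z,x_0)$. The rank bound (no $\nu+1$ pairwise "independent" walls/directions, transported via the four-point approximation of \cite{NibloWrightZang2019fourpoint}) is what should force this to stabilize after boundedly many genuinely new steps. I expect this to be the main obstacle: each replacement incurs an additive coarse error, and a naive count of replacements depends on $|Z|$.

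\textbf{Step 3 (geometric error reduction).} To beat the accumulation, I would work at scales $R_k=\eta^kR$ for some $\eta<1$. At stage $k$, $h_k$ satisfies $h_k\in[x_0,z]_{\epsilon_k}$ with $\epsilon_k=\eta^k\epsilon_0+C_0$. Applying the bounded-length move of Step 1 to a maximal $R_k$-separated set of "bad" witnesses improves the error by the factor $\eta$ while moving the point by $O(\epsilon_k)$. The total displacement is then $\sum_k O(\eta^kR)+O(\text{const})\le K_1R+K_2$, which is \ref{property: distance bound with K_1K_2}. The errors converge to a fixed constant $E^*=C_0/(1-\eta)$, which is \ref{property: inclusion with E*}. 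The process terminates because distances are integers in a graph, and local finiteness makes the sets involved finite at each stage. Finally, Proposition~\ref{proposition: coarse meet existence} follows by taking $Z=B_r(x)$, $a=x$, and $q_r(x)=h$, with $\kappa=E^*$ and $L(r)=K_1r+K_2$.
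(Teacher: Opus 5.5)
Your proposal has a genuine gap: nowhere do you obtain an error bound independent of $|Z|$, which is the whole content of the proposition.

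\textbf{Step 1 does not give a uniform error.} The cube complex $\Pi$ approximates $W\cup\{x_0,a\}$. Everything you transport from $\Pi$ to $X$ therefore holds only up to the approximation constant $h(|W|+2)$, not $h(\nu+2)$. This includes the claim that $\pi(q)$ lies coarsely in $[x_0,w]$ for each $w\in W$, and the claim that $\pi(q)$ is close to the coarse iterated median $h'$ of the witnesses $w_1,\dots,w_d$.

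Re-approximating the small set $\{w_1,\dots,w_d,w,x_0\}$ does not repair this. In the new complex $\Pi'$ nothing forces $m(E'(w_1),\dots,E'(w_d);E'(x_0))\in[E'(x_0),E'(w)]$. The fact that $w$ is redundant for the iterated median was established only in $\Pi$. So the quantity $d(h',\langle h',w,x_0\rangle)$ that you must bound is controlled only by $h(|W|+2)$. This is exactly the error accumulation the introduction warns about.

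\textbf{Steps 2 and 3 do not close the gap.} You leave Step 2 explicitly open. Step 3 merely asserts a contraction factor $\eta$. It also feeds Step 1 maximal $R_k$-separated sets whose cardinality grows with $k$, so any residual $|W|$-dependence would destroy the contraction.

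\textbf{The missing idea} (the paper's Corollary~\ref{corollary: iterative procedure}) is to replace the family indexed by $z\in Z$ with a single set that does not depend on $Z$. Suppose $a\in[x_0,z]_E$ for all $z\in Z$. Then every coarse projection $m_z=\langle x_0,z,a\rangle$ lies in $B_E(a)\cap[x_0,a]_\lambda$.

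Choose $r=\lceil\max\{R_T,E/T\}\rceil$ and apply \cite[Corollary~6.3]{SpakulaWright2017propertyA} (Lemma~\ref{lemma: wright corollary}). This is where the rank hypothesis enters; it plays the role you intended for the greedy Step 2. It produces one point $h\in[x_0,a]_{L_1(r)}$ with $d(a,h)\le L_3(r,T)$ and $B_{rT}(a)\cap[x_0,a]_\lambda\subseteq[a,h]_{L_2(r)}$, where $L_1,L_2$ are affine in $r$. In particular $m_z\in[a,h]_{L_2(r)}$ for every $z$.

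Two applications of Lemma~\ref{lemma: inclusions coarse} then finish the step. The first gives $h\in[x_0,m_z]_{Q_2(L_1(r),L_2(r))}$. The second, using $m_z\in[x_0,z]_\lambda$, gives $h\in[x_0,z]_{Ar+B}$ simultaneously for all $z$. The ball has radius $rT$ while the error is only linear in $r$. Taking $T>2A$ therefore gives the error recursion $E\mapsto\theta E+C_1$ with $\theta<1/2$ and displacement $O(E)$.

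With this step in hand, your Step 3 bookkeeping works. Start from $E_0=\rho R(a,Z)+h(0)$ and stop at the first $N$ with $E_N\le E^*$. Stopping there, rather than appealing to integrality of distances, lets you absorb the additive constants via $C_3\le (C_3/E^*)E_k$ for $k<N$.
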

Proposition~\ref{proposition: coarse meet existence} follows by applying Proposition~\ref{proposition: coarse approximation} with $Z=B_r(x)$ and $a=x$, and then defining $q_r(x)$ to be the resulting point $h$.

We turn to the proof of Proposition~\ref{proposition: coarse approximation}. The argument has three steps.
\begin{enumerate}
    \item We prove a coarse version of \cite[Lemma 5.4]{SpakulaWright2017propertyA}. It is the analogue of \cite[Lemma 4.8]{NibloWrightZang2019fourpoint} for the coarse intervals $[a,b]_\kappa$.
    \item We use \cite[Corollary 6.3]{SpakulaWright2017propertyA} to obtain an a priori linear coarse inclusion $\widetilde{h}\in [x_0,z]_{L(R(a,Z))}$ for all $z\in Z$.
    \item We iterate the procedure to obtain the uniform inclusion $h\in [x_0,z]_{E^*}$ for all $z\in Z$.
\end{enumerate}
We now carry out these steps. For brevity, write $\braket{x,y,z}=\mu(x,y,z)$. By \cite[Lemma 9.2]{Bowditch2014embedding}, there is a constant $\lambda>0$ such that, for all $x,y,z\in X$, 
\begin{align*}
    \braket{x,y,z}\in [x,y]_\lambda.
\end{align*}
There is also a constant $\gamma>0$ for which the coarse five-point relation holds \cite[Equation (2)]{NibloWrightZang2021}: for all $x,y,z,a,b\in X$,
\begin{align*}
    d\left(\braket{\braket{x,y,z},a,b}, \braket{\braket{a,b,x},\braket{a,b,y},z}\right)\le \gamma.
\end{align*}
This is commonly written as $\braket{\braket{x,y,z},a,b} \sim_\gamma \braket{\braket{a,b,x},\braket{a,b,y},z}$. 
We first record two inclusions for coarse intervals.
\begin{lemma}\label{lemma: inclusions coarse}
    Let $(X,d,\mu)$ be a coarse median metric space and let $\kappa_1,\kappa_2>0$. Then there are affine maps $Q_1,Q_2:\R_+^2\to\R_+$ such that, for all $x, y, a, b\in X$, the following implications hold.
    \begin{enumerate}
        \item If $x\in [a,b]_{\kappa_1}$ and $y\in [a,x]_{\kappa_2}$, we have $y\in [a,b]_{Q_1(\kappa_1,\kappa_2)}$.
        \item If $x\in [a,b]_{\kappa_1}$ and $y\in [a,x]_{\kappa_2}$, we have $x\in [y,b]_{Q_2(\kappa_1,\kappa_2)}$.
    \end{enumerate}
\end{lemma}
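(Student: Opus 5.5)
Both implications come from the same mechanism. Starting from a hypothesis $y\in[a,x]_{\kappa_2}$, we replace $x$ by a genuine median using $x\in[a,b]_{\kappa_1}$, redistribute that median with the coarse five-point relation, and absorb every substitution through the uniform Lipschitz bound $d(\braket{x,y,z},\braket{x',y',z'})\le\rho(d(x,x')+d(y,y')+d(z,z'))+h(0)$. Every error produced this way is a linear combination of $\kappa_1,\kappa_2$ plus constants built from $\rho$, $h(0)$ and $\gamma$. This is why the resulting maps $Q_1,Q_2$ are affine. Write $x'=\braket{a,b,x}$ and $y'=\braket{a,x,y}$, so that $d(x,x')\le\kappa_1$ and $d(y,y')\le\kappa_2$.

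For (1), we must bound $d(y,\braket{a,b,y})$. The first step is to replace $y$ inside the median by $y'=\braket{x,a,y}$; the Lipschitz bound gives $\braket{a,b,y}\sim\braket{a,b,\braket{x,a,y}}$ with error at most $2\rho\kappa_2+h(0)$. The five-point relation, with symmetry of $\mu$, then gives
\begin{align*}
\braket{\braket{x,a,y},a,b}\sim_\gamma\braket{\braket{a,b,x},\braket{a,b,a},y}=\braket{x',a,y}.
\end{align*}
Next we replace $x'$ by $x$ at cost $\rho\kappa_1+h(0)$, which yields $\braket{x,a,y}=y'\sim_{\kappa_2} y$. Summing the errors gives $Q_1(\kappa_1,\kappa_2)=(1+2\rho)\kappa_2+\rho\kappa_1+\gamma+2h(0)$, up to the bookkeeping of the constants.

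For (2), we must bound $d(x,\braket{y,b,x})$. We first replace $y$ by $y'=\braket{a,x,y}$ and $x$ by $x'=\braket{a,b,x}$ in the inner positions where convenient. We then apply the five-point relation to $\braket{\braket{a,x,y},b,x}$, which gives
\begin{align*}
\braket{\braket{a,y,x},b,x}\sim_\gamma\braket{\braket{b,x,a},\braket{b,x,y},x}.
\end{align*}
Since $\braket{b,x,a}=x'\sim_{\kappa_1}x$, the Lipschitz bound together with the identity $\mu(x,x,w)=x$ gives $\braket{x',\braket{b,x,y},x}\sim\braket{x,\braket{b,x,y},x}=x$. The error in this step is $\rho\kappa_1+h(0)$. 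The total error is affine in $(\kappa_1,\kappa_2)$, and we define $Q_2$ to be this total.

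The main obstacle is in (2). In that part, the five-point relation must be applied with the variables arranged so that the iterated median collapses using only the axiom $\mu(x,x,w)=x$ (Condition~\ref{property: median aspect}). A careless arrangement instead produces a term like $\braket{a,b,y}$, which is not yet controlled. To avoid this, I would first try the arrangement above. If it fails, I would use part (1) to obtain $y\in[a,b]_{Q_1}$ and feed this into a second application of the five-point relation, which keeps the bound affine. Part (1) causes no difficulty: the only points to check are the correct substitution into the five-point relation and the identity $\braket{a,b,a}=a$.
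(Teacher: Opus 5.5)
Your proposal is correct and follows the paper's argument. Part (1) is the same chain: the Lipschitz substitution of $\braket{a,x,y}$ for $y$, the five-point relation with $\braket{a,b,a}=a$, and a second Lipschitz substitution. The only difference is that changing one slot costs $\rho\kappa_2+h(0)$ rather than $2\rho\kappa_2+h(0)$, which is harmless. For part (2), which the paper only calls analogous, your arrangement works as written: one substitution $y\mapsto\braket{a,x,y}$ in $\braket{y,b,x}$, one five-point step, one substitution $\braket{a,b,x}\mapsto x$, and then $\mu(x,x,w)=x$. This gives exactly the paper's $Q_2(\kappa_1,\kappa_2)=\rho(\kappa_1+\kappa_2)+2h(0)+\gamma$, so the fallback through part (1) is unnecessary.
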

\begin{proof}
    For part (1), we must show that
    \begin{align*}
        d\left(y,\braket{a,b,y}\right)\le Q_1(\kappa_1,\kappa_2).
    \end{align*}
    By the triangle inequality, 
    \begin{align*}
        d\left(y,\braket{a,b,y}\right)\le d\left(y, \braket{a,b,\braket{a,x,y}}\right)+d\left(\braket{a,b,\braket{a,x,y}},\braket{a,b,y}\right).
    \end{align*}
    The second term is at most $\rho\kappa_2+h(0)$ by the coarse Lipschitz property and the assumption $y\in[a,x]_{\kappa_2}$. For the first term, another application of the triangle inequality gives
    \begin{align*}
        &d\left(y, \braket{a,b,\braket{a,x,y}}\right)\\
        &\le d\left(y, \braket{\braket{a,b,a},\braket{a,b,x},y}\right)+d\left(\braket{\braket{a,b,a},\braket{a,b,x},y},\braket{a,b,\braket{a,x,y}}\right).
    \end{align*}
    The coarse five-point relation bounds the second term by $\gamma$. As $\braket{a,b,a}=a$, we have
    \begin{align*}
        d\left(y,\braket{a,b,y}\right)\le \rho \kappa_2+h(0)+\gamma+d\left(y, \braket{a,\braket{a,b,x},y}\right).
    \end{align*}
    The triangle inequality yields
    \begin{align*}
        d\left(y, \braket{a,\braket{a,b,x},y}\right)\le d\left(y, \braket{a,x,y}\right)+d\left(\braket{a,x,y},\braket{a,\braket{a,b,x},y}\right).
    \end{align*}
    The first term is bounded by $\kappa_2$, whereas the second term is bounded by $\rho \kappa_1+h(0)$. It follows that
    \begin{align*}
         d\left(y,\braket{a,b,y}\right)\le (\rho+1)\kappa_2+\rho\kappa_1+\gamma+2h(0)=: Q_1(\kappa_1,\kappa_2).
    \end{align*}
    Part (2) is analogous; it is the coarse version of \cite[Lemma 5.4]{SpakulaWright2017propertyA}. One may take
    \begin{align*}
        Q_2(\kappa_1,\kappa_2)=\rho(\kappa_1+\kappa_2)+2h(0)+\gamma.
    \end{align*}
\end{proof}
We next recall a consequence of \cite{SpakulaWright2017propertyA}. Set
\begin{align*}
    &L_1(r):=(\rho+1)r+\rho \lambda+\gamma+2h(0)\\
    &L_2(r):=(\rho+2)r+h(0)\\
    &L_3(r,t)=K_\nu rt+r,
\end{align*}
where $K_\nu=3^\nu \rho^\nu$. 
\begin{lemma}[{\cite[Corollary 6.3]{SpakulaWright2017propertyA}}]\label{lemma: wright corollary}
    Let $(X,d,\mu)$ be a coarse median metric space of rank at most $\nu$. For every $\kappa,t>0$, there exists $r_t>0$ such that, for every $r\ge r_t$ and every $a,b\in X$, one can find $h\in X$ satisfying
    \begin{enumerate}
        \item $h\in [a,b]_{L_1(r)}$.
        \item $d(a,h)\le L_3(r,t)$.
        \item $B_{rt}(a)\cap [a,b]_\kappa \subseteq [a,h]_{L_2(r)}$.
    \end{enumerate}
\end{lemma}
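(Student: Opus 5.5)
The plan is to reproduce the argument of \cite[Section 6]{SpakulaWright2017propertyA} in the present notation, using the exact median picture of Lemma~\ref{lemma: distance of meets} as a template. Since the points of $[a,b]_\kappa$ satisfy $x\sim_\kappa\braket{a,b,x}$, the natural candidate for $h$ is a coarse \enquote{join} of $B_{rt}(a)\cap[a,b]_\kappa$ inside $[a,b]$.

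\textbf{Exact median model.} In a median algebra, for finitely many $x_1,\dots,x_n\in[a,b]$, set $h=m(x_1,\dots,x_n;b)$. By \eqref{equation: q characterization as intersection} we have $[b,h]=\bigcap_i[b,x_i]$. Hence $h\in[x_i,b]$ for every $i$, and together with $x_i\in[a,b]$ the standard betweenness rules give $x_i\in[a,h]$. We also have $h\in[a,b]$. By \cite[Lemma 5.3.iii]{SpakulaWright2017propertyA}, rank $\le\nu$ lets us replace the $x_i$ by at most $\nu$ of them without changing $h$. Then \cite[Proposition 5.5]{SpakulaWright2017propertyA} gives $d(a,h)\le 3^\nu K^\nu\max_i d(a,x_i)\le K_\nu rt$. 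These three facts are the exact versions of items (1)--(3).

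\textbf{Coarse version.} First choose at most $\nu$ points $x_1,\dots,x_d\in B_{rt}(a)\cap[a,b]_\kappa$ that are \enquote{extremal}. Following \cite[Section 6]{SpakulaWright2017propertyA}, I would take them to maximize the coarse iterated median $h=\braket{x_1,\dots,x_d;b}$ in a suitable sense, for instance by maximizing distance from $b$ among such iterated medians. Next apply the coarse median approximation (Condition~\ref{property: coarse median property}) to the finite set $\{a,b,x_1,\dots,x_d,y\}$, where $y$ is an arbitrary further point of $B_{rt}(a)\cap[a,b]_\kappa$. This set has cardinality bounded by $\nu+3$, so all errors are controlled by $h(\nu+3)$, $\rho$ and $\gamma$. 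Transporting the exact statements back through $\pi$ then yields the estimates below.
\begin{itemize}
\item[(1)] $h\in[a,b]$ up to an additive constant.
\item[(2)] $d(a,h)\le K_\nu rt$ plus a constant.
\item[(3)] $y\in[a,h]$ up to a constant $C(\kappa,\nu)$, provided $y$ adds nothing to the iterated median. This is guaranteed by the extremal choice of the $x_i$ together with the rank bound, since any $\nu+1$ of the points are redundant.
\end{itemize}
Finally, choose $r_t$ so large that every constant appearing is at most $r$, and likewise $\le\rho\lambda+\gamma+2h(0)+(\rho+1)r$ in the form of $L_1$. This turns the additive errors into the stated affine functions $L_1(r)$, $L_2(r)$ and $L_3(r,t)=K_\nu rt+r$. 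Throughout, Lemma~\ref{lemma: inclusions coarse} is used to pass between the various coarse interval inclusions.

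\textbf{Main obstacle.} The hard step is the extremal choice in (3). In the coarse setting the iterated median over all of $B_{rt}(a)\cap[a,b]_\kappa$ accumulates error with the cardinality, so we cannot simply take all points. Instead we must argue that some at most $\nu$ points already coarsely dominate every $y$, with constants independent of $y$. My first attempt would be a contradiction argument. Suppose no such choice works. Then we can build $\nu+1$ points that, in a median approximation of bounded size, span a $(\nu+1)$-cube whose side lengths are large compared to the approximation error $h(\nu+3)$. This contradicts rank $\le\nu$, and the need for large side lengths is exactly why the conclusion only holds for $r\ge r_t$.
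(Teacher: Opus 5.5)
\textbf{There is a genuine gap at item (3), the step you defer as the \enquote{main obstacle}.} The paper gives no proof of this lemma; it imports it from \cite[Corollary 6.3]{SpakulaWright2017propertyA}. So your reconstruction has to stand on its own. The exact-median model, item (1), and the bound $d(a,h)\le K_\nu rt+O(1)$ for a coarse iterated median of at most $\nu$ points of $B_{rt}(a)$ are routine.

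What is missing is any mechanism that selects at most $\nu$ points of $B_{rt}(a)\cap[a,b]_\kappa$ whose coarse join dominates \emph{every} point of that set. Your criterion does not do this.
\begin{itemize}
\item Maximizing $d(b,h)$ goes the wrong way. Adjoining points moves the join towards $b$, so $h=a$ is essentially optimal, and it dominates nothing.
\item Minimizing $d(b,h)$ also fails, because the metric does not detect the median order. Take $\mathbb{Z}^2$ with the king-move graph metric $d_\infty$ and the coordinatewise median. This is an exact rank-$2$ median algebra, hence a coarse median space with $\rho=1$ and $h\equiv 0$. Let $\kappa=1/2$, $b=(0,0)$, $a=(R,2R)$. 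Then $B_R(a)\cap[a,b]_\kappa=[0,R]\times[R,2R]$. Every iterated median with base $b$ of points of this set is a coordinatewise minimum, so it lies at $d_\infty$-distance at least $R$ from $b$. The single point $x_1=(R,R)$ attains this minimum. Yet $y=(0,R)$ lies in the set and $d(y,\mu(a,x_1,y))=d((0,R),(R,R))=R$. With $R=rt$, this error exceeds $L_2(r)=3r$ as soon as $t>3$.
\item The fallback contradiction argument is not a proof either. Using \enquote{no choice of at most $\nu$ points works} greedily means repeatedly adjoining a point that escapes the current join. This produces sequences in which each new point escapes by more than $C$. Such sequences can be arbitrarily long chains in one direction (already along a single geodesic in a tree), not $\nu+1$ independent directions. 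To extract a large $(\nu+1)$-quasi-cube you would first need exactly the extremal choice that is missing.
\end{itemize}

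\textbf{Your target is stronger than the lemma.} An error $C(\kappa,\nu)$ in (3) independent of $r$ is essentially Proposition~\ref{proposition: coarse approximation} applied with base point $b$ and $Z=B_R(a)\cap[a,b]_\kappa$. Lemma~\ref{lemma: inclusions coarse} then turns $h\in[b,z]_{E^*}$ into $h\in[a,b]_{C}$ and $z\in[a,h]_{C}$. The paper obtains that proposition only by feeding the present lemma, with its errors of order $r$, into the error-reduction iteration of Corollary~\ref{corollary: iterative procedure}. So the obstacle you postpone is precisely the cardinality-independence problem that Section~\ref{section: coarse median spaces} is built to solve, not a technicality.

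The lemma itself asks for less: thickenings of order $r$ and a threshold $r_t$ that depends on $t$. The natural reading is that the cited argument controls $B_{rt}(a)\cap[a,b]_\kappa$ only up to scale $r$, with a $t$-dependent approximation error absorbed into $r$ once $r\ge r_t$. In your scheme all constants depend only on $\kappa$ and $\nu$, so the $t$-dependence of $r_t$ has no counterpart there. That is a further sign that the argument you need differs from the one you sketched.
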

For $r\in\N$, set $P(r)=Q_2(L_1(r),L_2(r))$ and 
\begin{align*}
    W(r)=Q_1(\lambda, P(r))=: Ar+B;\quad A,B\ge 0.
\end{align*}
Choose $T\ge 1$ large enough that
\begin{align*}
    \theta:=\frac{A}{T}<\frac{1}{2}.
\end{align*}
The resulting constant $\theta$ depends only on the coarse parameters and the rank $\nu$. The iteration follows from the following corollary.
\begin{corollary}\label{corollary: iterative procedure}
    There exist constants $C_1,C_2,C_3>0$, depending only on $T$, the coarse parameters, and the rank, with the following property. Let $Z\subseteq X$, $a\in X$, and $E\ge 0$ satisfy
    \begin{align*}
        a\in [x_0,z]_E\qquad\text{for every }z\in Z.
    \end{align*}
    Then there exists $h\in X$ such that
    \begin{align*}
        &h\in [x_0,z]_{\theta E+C_1}\qquad\text{for every }z\in Z,\\
        &d(a,h)\le C_2E+C_3.
    \end{align*}
\end{corollary}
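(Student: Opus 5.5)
The plan is to apply Lemma~\ref{lemma: wright corollary} to the pair $(a,b)=(a,x_0)$, with scale $r$ proportional to $E$. Then we use the two inclusions of Lemma~\ref{lemma: inclusions coarse} to pass from the new point $h\in[a,x_0]$ to the desired coarse intervals $[x_0,z]$. Fix once and for all $\kappa=\lambda$ and the parameter $t=T$, and let $r_T$ be given by Lemma~\ref{lemma: wright corollary}. Set
\[
r:=\max\{\lceil E/T\rceil, r_T\},
\]
so that $r\le E/T+r_T+1$. Lemma~\ref{lemma: wright corollary} then yields $h\in X$ with $h\in[a,x_0]_{L_1(r)}$ (equivalently $h\in [x_0,a]_{L_1(r)}$ by symmetry of $\mu$) and $d(a,h)\le L_3(r,T)=K_\nu rT+r$. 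Since $r$ is affine in $E$ with slope $1/T$, this gives $d(a,h)\le C_2E+C_3$, with $C_2,C_3$ depending only on $T,\nu$ and the coarse parameters.

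For the interval estimate, fix $z\in Z$. The natural point to compare with is $w:=\braket{x_0,z,a}$, which lies in $[x_0,z]_\lambda$ and in $[a,x_0]_\lambda$, and satisfies $d(a,w)\le E$ because $a\in[x_0,z]_E$. Hence $w\in B_{rT}(a)\cap[a,x_0]_\lambda$, since $rT\ge E$. Item (3) of Lemma~\ref{lemma: wright corollary} gives $w\in[a,h]_{L_2(r)}$. We combine this with $h\in[a,x_0]_{L_1(r)}$ via Lemma~\ref{lemma: inclusions coarse}(2), applied with base $a$, outer point $x_0$, middle point $h$ and inner point $w$. This yields $h\in[w,x_0]_{Q_2(L_1(r),L_2(r))}=[x_0,w]_{P(r)}$. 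Next we apply Lemma~\ref{lemma: inclusions coarse}(1) with base $x_0$, using $w\in[x_0,z]_\lambda$ and $h\in[x_0,w]_{P(r)}$. This yields
\[
h\in[x_0,z]_{Q_1(\lambda,P(r))}=[x_0,z]_{W(r)}=[x_0,z]_{Ar+B}.
\]
Finally,
\[
Ar+B\le A(E/T+r_T+1)+B=\theta E+C_1,
\]
with $C_1=A(r_T+1)+B$. This is exactly the required contraction, and it is precisely why $T$ was chosen with $\theta=A/T<1/2$.

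The main obstacle is bookkeeping the argument order in Lemma~\ref{lemma: inclusions coarse}, that is, which point plays the role of $a$, $b$, $x$, $y$. Part~(2) must output an interval whose endpoint is $w$ rather than $a$, so that part~(1) can chain it into $[x_0,z]$. This step relies on the symmetry of $\mu$ (Condition~\ref{property: median aspect}) to rewrite $[u,v]_\kappa=[v,u]_\kappa$.

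A second point to check is that the constants coming out of Lemma~\ref{lemma: wright corollary} are uniform in $a$, $x_0$ and $E$. They are, since $r_T$ depends only on $(\kappa,T)$, and $\kappa=\lambda$ is fixed. The case of small $E$ is absorbed by taking $r\ge r_T$, which only enlarges the constants $C_1$ and $C_3$.
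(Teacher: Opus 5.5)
Your proposal is correct and follows the paper's proof essentially step for step. It uses the same choices $\kappa=\lambda$, $t=T$, $b=x_0$ with $r\approx\max\{r_T,E/T\}$ and the same comparison point $\braket{x_0,z,a}$, then chains through Lemma~\ref{lemma: inclusions coarse} in the same way (part (2) gives $h\in[x_0,w]_{P(r)}$, part (1) gives $h\in[x_0,z]_{W(r)}$), yielding the same constants, and the role assignment you make explicit, which the paper leaves implicit, checks out.
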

\begin{proof}
    Apply Lemma~\ref{lemma: wright corollary} with $\kappa=\lambda$, $t=T$, and $b=x_0$, and let $R_T$ denote the corresponding threshold. Let
    \begin{align*}
        r=\left\lceil\max\left\{R_T, \frac{E}{T}\right\}\right\rceil,
    \end{align*}
    and let $h\in X$ be the resulting point, so that $h\in[x_0,a]_{L_1(r)}$.
    For each $z\in Z$, define $m_z=\braket{x_0,z,a}\in [x_0,z]_\lambda\cap [x_0,a]_\lambda$. Since $a\in[x_0,z]_E$,
    \begin{align*}
        d(a,m_z)\le E.
    \end{align*}
    Since $m_z\in B_{E}(a)\subseteq B_{rT}(a)$ and $m_z\in [x_0,a]_\lambda$, Lemma~\ref{lemma: wright corollary} implies that
    \begin{align*}
        m_z\in [h,a]_{L_2(r)}.
    \end{align*}
    Lemma~\ref{lemma: inclusions coarse} then gives
    \begin{align*}
        h\in [x_0,m_z]_{Q_2(L_1(r),L_2(r))}=[x_0,m_z]_{P(r)}.
    \end{align*}
    Since $m_z\in[x_0,z]_\lambda$, a second application of Lemma~\ref{lemma: inclusions coarse} gives
    \begin{align*}
        h\in [x_0,z]_{Q_1(\lambda, P(r))}=[x_0,z]_{W(r)}.
    \end{align*}
    Moreover,
    \begin{align*}
        W(r)=Ar+B\le A\left(R_T+\frac{E}{T}+1\right)+B=\theta E+AR_T+A+B.
    \end{align*}
    Choose $C_1=AR_T+A+B$ so that
    \begin{align*}
        h\in [x_0,z]_{\theta E+C_1}\qquad\text{for every }z\in Z.
    \end{align*}
    This proves the interval inclusion. For the distance estimate,
    \begin{align*}
        d(a,h)&\le K_\nu rT+r\le K_\nu T\left(R_T+\frac{E}{T}+1\right)+\left(R_T+\frac{E}{T}+1\right)\\
        &=E\left(K_\nu+\frac{1}{T}\right)+\left(R_T+1+K_\nu TR_T+K_\nu T\right).
    \end{align*}
    Thus the distance bound holds with $C_2=\left(K_\nu+\frac{1}{T}\right)$ and $C_3=R_T+1+K_\nu TR_T+K_\nu T$.
\end{proof}
We can now complete the proof of Proposition~\ref{proposition: coarse approximation}.
\begin{proof}[Proof of Proposition~\ref{proposition: coarse approximation}]
    For every $z\in Z$,
    \begin{align*}
        d(a,\braket{x_0,z,a})=d(\braket{x_0,a,a},\braket{x_0,z,a})\le\rho R(a,Z)+h(0)=:E_0.
    \end{align*}
    Thus $a\in[x_0,z]_{E_0}$ for every $z\in Z$.
    Apply Corollary~\ref{corollary: iterative procedure} with $a_0=a$ and $E=E_0$. This yields $a_1\in X$ such that
    \begin{align*}
        &a_1\in [x_0,z]_{\theta E_0+C_1}\qquad\text{for every }z\in Z,\\
        &d(a_1,a)\le C_2E_0+C_3.
    \end{align*}
    Recursively, let $a_{n+1}\in X$ be the output of Corollary~\ref{corollary: iterative procedure} applied with $a=a_n$ and $E=E_n$. Then
    \begin{equation}
    \label{equation: recursive bounds}
    \begin{aligned}
        &a_{n+1}\in [x_0,z]_{E_{n+1}}\qquad\text{for every }z\in Z,\\
        &E_{n+1}=\theta E_n+C_1\\
        &d(a_{n+1},a_{n})\le C_2 E_n+C_3.
    \end{aligned}      
    \end{equation}
    Define
    \begin{align*}
        E^*=\max \left\{1,\frac{2C_1}{1-\theta}\right\};\quad \delta=\frac{1+\theta}{2}<1.
    \end{align*}
    Note that $E^*$ and $\delta$ depend only on $\nu$ and the coarse parameters.
    Whenever $E_n>E^*$,
    \begin{align*}
        E_n>\frac{2C_1}{1-\theta},
    \end{align*}
    and therefore $C_1< \frac{1-\theta}{2}E_n$. In particular,
    \begin{align*}
        E_{n+1}=\theta E_n+C_1< \theta E_n+\frac{1-\theta}{2}E_n=\delta E_n.
    \end{align*}
    Since $\delta<1$, define
    \begin{align*}
        N=\min\{n\in \N: E_n \le E^*\}<\infty.
    \end{align*}
    Then, for every $k<N$, $E_{k+1}\le \delta E_k$.
    Set $h=a_N$.
    By \eqref{equation: recursive bounds}, we have
    \begin{align*}
        h\in [x_0,z]_{E_N}\subseteq [x_0,z]_{E^*}\qquad\text{for every }z\in Z.
    \end{align*}
    This proves Property~\ref{property: inclusion with E*}. The triangle inequality gives
    \begin{align*}
        d(a,h)\le \sum_{k=0}^{N-1} d(a_k,a_{k+1}).
    \end{align*}
    For every $0\le k<N$, $E_k>E^*$. Hence, \eqref{equation: recursive bounds} yields
    \begin{align*}
        d(a_k,a_{k+1})\le C_2 E_k+C_3=\left(C_2+\frac{C_3}{E_k}\right)E_k\le \left(C_2+\frac{C_3}{E^*}\right)E_k.
    \end{align*}
    Consequently,
    \begin{align*}
        d(a,h)\le \left(C_2+\frac{C_3}{E^*}\right) \sum_{k=0}^{N-1}E_k.
    \end{align*}
    Since $E_{k+1}\le\delta E_k$, the geometric-series estimate gives
    \begin{align*}
        d(a,h)\le \left(C_2+\frac{C_3}{E^*}\right)E_0 \sum_{k=0}^{\infty}\delta^k=\frac{1}{1-\delta}\left(C_2+\frac{C_3}{E^*}\right)E_0.
    \end{align*}
    The constants $\delta,C_2,C_3,E^*$ depend only on the coarse parameters and $\nu$, while $E_0=\rho R(a,Z)+h(0)$. This proves Property~\ref{property: distance bound with K_1K_2}.
\end{proof}



\begin{thebibliography}{10}

\bibitem{Haagerup1978originalpaper}
Uffe Haagerup.
\newblock An example of a nonnuclear {$C\sp{\ast} $}-algebra, which has the
  metric approximation property.
\newblock {\em Invent. Math.}, 50(3):279--293, 1979.

\bibitem{Jolissaint1990RD}
Paul Jolissaint.
\newblock Rapidly decreasing functions in reduced {$C^*$}-algebras of groups.
\newblock {\em Trans. Amer. Math. Soc.}, 317(1):167--196, 1990.

\bibitem{DrutuSapirrelativehyperbolic}
Cornelia Dru\c tu and Mark Sapir.
\newblock Relatively hyperbolic groups with rapid decay property.
\newblock {\em Int. Math. Res. Not.}, (19):1181--1194, 2005.

\bibitem{Chatterji2017Introduction}
Indira Chatterji.
\newblock Introduction to the rapid decay property.
\newblock In {\em Around {L}anglands correspondences}, volume 691 of {\em
  Contemp. Math.}, pages 53--72. Amer. Math. Soc., Providence, RI, 2017.

\bibitem{ConnesHenri1990Novikovconjecture}
Alain Connes and Henri Moscovici.
\newblock Cyclic cohomology, the {N}ovikov conjecture and hyperbolic groups.
\newblock {\em Topology}, 29(3):345--388, 1990.

\bibitem{Lafforgue2002BaumConnes}
Vincent Lafforgue.
\newblock {$K$}-th\'eorie bivariante pour les alg\`ebres de {B}anach et
  conjecture de {B}aum-{C}onnes.
\newblock {\em Invent. Math.}, 149(1):1--95, 2002.

\bibitem{HaagerupPisier1993operatorspace}
Uffe Haagerup and Gilles Pisier.
\newblock Bounded linear operators between {$C^*$}-algebras.
\newblock {\em Duke Math. J.}, 71(3):889--925, 1993.

\bibitem{Buchholz1999OVfree}
Artur Buchholz.
\newblock Norm of convolution by operator-valued functions on free groups.
\newblock {\em Proc. Amer. Math. Soc.}, 127(6):1671--1682, 1999.

\bibitem{RicardXu2006reducedfreeproducts}
\'Eric Ricard and Quanhua Xu.
\newblock Khintchine type inequalities for reduced free products and
  applications.
\newblock {\em J. Reine Angew. Math.}, 599:27--59, 2006.

\bibitem{CaspersKlisseLarsen2021graphproduct}
Martijn Caspers, Mario Klisse, and Nadia~S. Larsen.
\newblock Graph product {K}hintchine inequalities and {H}ecke {$\rm
  C^\ast$}-algebras: {H}aagerup inequalities, (non)simplicity, nuclearity and
  exactness.
\newblock {\em J. Funct. Anal.}, 280(1):Paper No. 108795, 41, 2021.

\bibitem{CiobanyHoltRees2013rdgraphproducts}
Laura Ciobanu, Derek~F. Holt, and Sarah Rees.
\newblock Rapid decay is preserved by graph products.
\newblock {\em J. Topol. Anal.}, 5(2):225--237, 2013.

\bibitem{ToyotaYang2026OVhyperbolic}
Ryo Toyota and Zhiyuan Yang.
\newblock An operator-valued haagerup inequality for hyperbolic groups.
\newblock 2026.

\bibitem{KempSpeicher2007strongHaagerupscalar}
Todd Kemp and Roland Speicher.
\newblock Strong {H}aagerup inequalities for free {$\mathcal{R}$}-diagonal
  elements.
\newblock {\em J. Funct. Anal.}, 251(1):141--173, 2007.

\bibitem{delaSalle2009strongHaagerup}
Mikael de~la Salle.
\newblock Strong {H}aagerup inequalities with operator coefficients.
\newblock {\em J. Funct. Anal.}, 257(12):3968--4002, 2009.

\bibitem{Schwer2023CATintroduction}
Petra Schwer.
\newblock {\em {${\rm CAT}(0)$} cube complexes---an introduction}, volume 2324
  of {\em Lecture Notes in Mathematics}.
\newblock Springer, Cham, 2023.

\bibitem{Bowditch2013coarsemedian}
Brian~H. Bowditch.
\newblock Coarse median spaces and groups.
\newblock {\em Pacific J. Math.}, 261(1):53--93, 2013.

\bibitem{SpakulaWright2017propertyA}
J\'an \v~Spakula and Nick Wright.
\newblock Coarse medians and property {A}.
\newblock {\em Algebr. Geom. Topol.}, 17(4):2481--2498, 2017.

\bibitem{NibloWrightZang2019fourpoint}
Graham~A. Niblo, Nick Wright, and Jiawen Zhang.
\newblock A four point characterisation for coarse median spaces.
\newblock {\em Groups Geom. Dyn.}, 13(3):939--980, 2019.

\bibitem{NibloWrightZang2021}
Graham~A. Niblo, Nick Wright, and Jiawen Zhang.
\newblock Coarse median algebras: the intrinsic geometry of coarse median
  spaces and their intervals.
\newblock {\em Selecta Math. (N.S.)}, 27(2):Paper No. 20, 50, 2021.

\bibitem{Bowditch2014embedding}
Brian~H. Bowditch.
\newblock Embedding median algebras in products of trees.
\newblock {\em Geom. Dedicata}, 170:157--176, 2014.

\bibitem{Bowditch2013invariancehyperbolicity}
Brian~H. Bowditch.
\newblock Invariance of coarse median spaces under relative hyperbolicity.
\newblock {\em Math. Proc. Cambridge Philos. Soc.}, 154(1):85--95, 2013.

\bibitem{Bowditch2019notes}
Brian~H. Bowditch.
\newblock Notes on coarse median spaces.
\newblock In {\em Beyond hyperbolicity}, volume 454 of {\em London Math. Soc.
  Lecture Note Ser.}, pages 3--24. Cambridge Univ. Press, Cambridge, 2019.

\bibitem{Sapir2015centroids}
Mark Sapir.
\newblock The rapid decay property and centroids in groups.
\newblock {\em J. Topol. Anal.}, 7(3):513--541, 2015.

\bibitem{BehrstockMinsky2011centroid}
Jason~A. Behrstock and Yair~N. Minsky.
\newblock Centroids and the rapid decay property in mapping class groups.
\newblock {\em J. Lond. Math. Soc. (2)}, 84(3):765--784, 2011.

\bibitem{ChatterjiRuane2005median}
I.~Chatterji and K.~Ruane.
\newblock Some geometric groups with rapid decay.
\newblock {\em Geom. Funct. Anal.}, 15(2):311--339, 2005.

\bibitem{Pisier2001}
Gilles Pisier.
\newblock {\em Similarity Problems and Completely Bounded Maps}, volume 1618 of
  {\em Lecture Notes in Mathematics}.
\newblock Springer, Berlin, Heidelberg, 2 edition, 2001.

\end{thebibliography}
\end{document}